\documentclass[a4paper]{amsart}
\usepackage[active]{srcltx}
\usepackage[all]{xy}
\usepackage{subfig}
\usepackage{hyperref}
\usepackage{mathrsfs}
\setlength{\textwidth}{16cm}
\setlength{\textheight}{23cm}
\setlength{\oddsidemargin}{0pt}
\setlength{\evensidemargin}{0pt}
\usepackage{esint}
\usepackage{amsmath}
\usepackage{amssymb,latexsym}
\usepackage{mathrsfs}
\usepackage{graphics}
\usepackage{latexsym}
\usepackage{psfrag}
\usepackage{import}
\usepackage{verbatim}
\usepackage{graphicx}
\usepackage[usenames]{color}
\usepackage{pifont,marvosym}
\usepackage[normalem]{ulem}

\theoremstyle{plain}
\newtheorem{lemma}{Lemma}[section]
\newtheorem{theorem}[lemma]{Theorem}
\newtheorem{proposition}[lemma]{Proposition}
\newtheorem{corollary}[lemma]{Corollary}

\theoremstyle{definition}

\newtheorem{definition}[lemma]{Definition}
\newtheorem{remark}[lemma]{Remark}

\numberwithin{equation}{section}

\newcommand{\dom}{\textrm{Dom\,}}

\newcommand{\R}{\mathbb{R}}
\newcommand{\N}{\mathbb{N}}

\newcommand{\supp}{\text{\rm supp}}

\newcommand{\Lip}{\mathrm{Lip}}
\newcommand{\gr}{\textrm{graph}}

\newcommand{\diam}{{\rm diam\,}}

\newcommand{\ve}{\varepsilon}

\newcommand{\cI}{\mathcal{I}}

\newcommand{\f}{\varphi}

\newcommand{\T}{\mathcal{T}}
\newcommand{\I}{\mathcal{I}}

\renewcommand{\L}{\mathcal{L}}
\newcommand{\RCD}{\mathsf{RCD}}

\newcommand{\CD}{\mathsf{CD}}
\newcommand{\Geo}{{\rm Geo}}
\newcommand{\MCP}{\mathsf{MCP}}
\newcommand{\mm}{\mathfrak m}
\newcommand{\qq}{\mathfrak q}
\newcommand{\QQ}{\mathfrak Q}

\newcommand{\sfd}{\mathsf d}

\newcommand{\PP}{\mathsf{P}}
\newcommand{\Opt}{\mathrm{OptGeo}}
\newcommand{\vol}{\mathrm{vol}}

\begin{document}

\title{Quantitative isoperimetry \`a  la  Levy-Gromov}
\author{F. Cavalletti} \thanks{F. Cavalletti: SISSA, Trieste, email: cavallet@sissa.it}
\author{F. Maggi}  \thanks{F. Maggi: University of Texas at Austin, email: maggi@math.utexas.edu}
\author{A. Mondino}  \thanks{A.  Mondino: University of Warwick,  Mathematics Institut,  email: A.Mondino@warwick.ac.uk}

\keywords{quantitative isoperimetric inequality, Levy-Gromov isoperimetric inequality, Ricci curvature,  optimal transport}

\bibliographystyle{plain}

\begin{abstract} On a Riemannian manifold with a positive lower bound on the Ricci tensor, the distance of isoperimetric sets from geodesic balls is quantitatively controlled in terms of the gap between the isoperimetric profile of the manifold and that of a round sphere of suitable radius. The deficit between the diameters of the manifold and of the corresponding sphere is bounded likewise. These results are actually obtained in the more general context of (possibly non-smooth) metric measure spaces
with curvature-dimension conditions through a quantitative analysis of the transport-rays decompositions obtained by the localization method.
\end{abstract}

\maketitle

\section{Introduction} Comparison theorems are an important part of Riemannian Geometry \cite{Chav06, CE75, Pet}. The typical result asserts that a complete Riemannian manifold with a pointwise curvature bound retains some geometric properties of the corresponding simply connected model space. We are interested here in the {\it Levy-Gromov comparison Theorem}, stating that, under a positive lower bound on the Ricci tensor, the isoperimetric profile of the manifold is bounded from below by the isoperimetric profile of the sphere. More precisely, define the isoperimetric profile of a smooth Riemannian manifold $(M,g)$ by
\[
\cI_{(M,g)}(v)=\inf\left\{\frac{\PP(E)}{\vol_g(M)}:\frac{\vol_g(E)}{\vol_g(M)}=v\right\}\qquad 0<v<1\,,
\]
where $\PP(E)$ denotes the perimeter of a region $E\subset M$. The Levy-Gromov comparison Theorem states that, if ${\rm Ric}_g\ge (N-1)g$, where $N$ is the dimension of $(M,g)$, then
\begin{equation}
  \label{levygromov inequality}
  \cI_{(M,g)}(v)\ge \cI_{(\mathbb{S}^N,g_{\mathbb{S}^N})}(v)\qquad\forall v\in(0,1)\,,
\end{equation}
where $g_{\mathbb{S}^N}$ is the round metric on $\mathbb{S}^N$ with unit sectional curvature; moreover, if equality holds in \eqref{levygromov inequality} for some $v\in(0,1)$, then $(M,g)$ is isometric to $(\mathbb{S}^N,g_{\mathbb{S}^N})$.

Our main result is a quantitative estimate, in terms of the gap in the Levy-Gromov inequality, on the shape of isoperimetric sets in $(M,g)$. We show that isoperimetric sets are close to geodesic balls. Since the classes of isoperimetric sets and geodesic balls {\it coincide} in the model space $(\mathbb{S}^N,g_{\mathbb{S}^N})$, one can see our main result as a {\it quantitative comparison theorem}.   In detail, we show that if ${\rm Ric}_g\ge (N-1)g$ and $E\subset M$ is an isoperimetric set in $M$ with $\vol_g(E)=v\,\vol_g(M)$, then there exists $x\in M$ such that
\begin{equation}
  \label{main theorem intro}
  \frac{\vol_g\big(E\Delta B_{r_N(v)}(x)\big)}{\vol_g(M)}\le C(N,v)\,\Big(\cI_{(M,g)}(v)-\cI_{(\mathbb{S}^N,g_{\mathbb{S}^N})}(v)\Big)^{{\rm O}(1/N)}
\end{equation}
where $B_r(x)$ denotes the geodesic ball in $(M,g)$ with radius $r$ and center $x$, $r_N(v)$ is the radius of a geodesic ball in $\mathbb{S}^N$ with volume $v\,\vol_{g_{\mathbb{S}^N}}(\mathbb{S}^N)$ and $\cdot \Delta \cdot$ denotes the symmetric difference of sets. More generally the same conclusion holds for every $E\subset M$ with $\vol_g(E)=v\,\vol_g(M)$, provided $\cI_{(M,g)}(v)$ on the right-hand side of \eqref{main theorem intro} is replaced by $\PP(E)/\vol_g(M)$. 
\\

We approach the proof of \eqref{main theorem intro} from the synthetic point of view of metric-measure geometry. For the sake of this introduction, a metric-measure space is a triple $(X,\sfd,\mm)$ where $(X,\sfd)$ is a compact metric space and $\mm$ is a Borel probability measure, playing the role of reference volume measure. Using optimal-transport techniques,  Sturm \cite{sturm:I, sturm:II} and Lott--Villani \cite{lottvillani:metric} introduced the curvature-dimension condition $\CD(K,N)$; the rough geometric picture is that a  $\CD(K,N)$ space should be thought of as a possibly non-smooth metric measure space with Ricci curvature bounded below by $K\in \R$ and dimension bounded above by $N\in (1,\infty)$ is a synthetic sense.  The basic idea of such a synthetic point of view is to analyse weighted convexity properties of certain entropy functionals along geodesics in the space of probability measures endowed with the quadratic transportation distance.
\\A key technical assumption throughout the paper  is the so called \emph{essentially non-branching} property \cite{RS2014}, 
which roughly amounts to require that the $L^{2}$-optimal transport between two absolutely continuous (with respect to the reference measure $\mm$) probability measures moves along a family of geodesics with no intersections, i.e.
a non-branching set of geodesics (for the precise definitions see Section \ref{s:W2}). Examples of essentially non-branching spaces are Riemannian manifolds, Alexandrov spaces, Ricci limits and more generally $\RCD(K,N)$-spaces, Finsler manifolds endowed with a strongly convex norm; a standard example of a space failing to satisfy the essential non-branching property is $\R^{2}$ endowed with the $L^{\infty}$ norm.
\\In the end of the introduction, when discussing the main steps of the proof, we will mention where the essentially non-branching property is used.
\\

Our approach to establish \eqref{main theorem intro} is to regard an $N$-dimensional Riemannian manifold $(M,g)$ with ${\rm Ric}_g\ge (N-1)g$ as an essentially non-branching  metric measure space $(X,\sfd,\mm)$ satisfying the $\CD(N-1,N)$ curvature-dimension condition, e.n.b. $\CD(N-1,N)$-space for short.  Considering this extension is natural, because the class of e.n.b.  $\CD(N-1,N)$-spaces contains measured Gromov-Hausdorff limits of $N$-dimensional Riemannian manifolds with Ricci tensor bounded from below by the constant $N-1$. And, in turn, a sequence of such Riemannian manifolds $(M_h,g_h)$ such that the right-hand side of \eqref{main theorem intro} tends to zero as $h\to\infty$ may develop singularities and admits a limit only in the measured Gromov-Hausdorff sense to an e.n.b. $\CD(N-1,N)$-space.  
\\

 In the enlarged class of  e.n.b. $\CD(N-1,N)$-spaces, round spheres are not anymore the only equality cases in the Levy-Gromov comparison Theorem, which instead coincide with the whole family of the so-called {\it spherical suspensions}. 
\\In addition, as proved in \cite{CM1} and recalled in Theorem \ref{theorem:LGM} below, the Levy-Gromov comparison theorem holds on essentially non-branching metric measure spaces
verifying the $\CD(N-1,N)$ condition with {\it any real number} $N> 1$. In this general setting, the comparison isoperimetric profile is the one defined by the model space
\begin{equation}\label{eq:Model1D}
\left([0,\pi], |\cdot|, \frac{\sin^{N-1}(t)}{\omega_{N}} \L^{1}\right)\qquad\mbox{where}\,\,\omega_N=\int_0^\pi\sin^{N-1}(t)\,dt\,,
\end{equation}
and $|\cdot|$ denotes the Euclidean distance on $\mathbb{R}$. Denoting by $\cI_{N-1,N,\pi}$ the isoperimetric profile of this comparison model space, see \eqref{isoperimetric profile mms}, we notice that
\[
\cI_{N-1,N,\pi}(v)=\cI_{(\mathbb{S}^N,g_{\mathbb{S}^N})}(v)\qquad\forall v\in(0,1)\,,\forall N\in\N\,,N\ge2\,.
\]
With this notation in force, we state our main theorem.

\begin{theorem} \label{thm:CD} For every real number $N>1$ and $v\in(0,1)$ there exists a real constant $C(N,v)>0$ with the following property. If $(X,\sfd,\mm)$ is an essentially non-branching metric measure space satisfying the $\CD(N-1,N)$ condition and $\mm(X) = 1$ with $\supp(\mm) = X$, then
\begin{equation}
  \label{tesi0}
  \pi-\diam(X)\le C(N,v)\,\Big(\cI_{(X,\sfd,\mm)}(v)-\cI_{N-1,N,\pi}(v)\Big)^{1/N}\,.
\end{equation}
Moreover, for every Borel set $E\subset X$ with $\mm(E)=v$ there exists $\bar{x}\in X$ such that
\begin{equation}
  \label{tesi}
  \mm(E\Delta B_{r_{N}(v)}(\bar{x})) \leq C(N,v) \;\Big(\PP(E)-\cI_{N-1,N,\pi}(\mm(E))\Big)^{\eta}\qquad \eta=\frac{N}{N^{2}+2N-1}\,,
\end{equation}
where $r_{N}(v)$ is defined by
\[
\int_{0}^{r_{N}(v)} \sin^{N-1}(t)dt = v \omega_{N}\,.
\]
Finally, if $(X,\sfd)$ is isometric to a smooth Riemannian manifold $(M,g)$ (endowed with any measure $\mm$ such that the assumptions of the theorem hold), one can take $\eta=N/(N^2+N-1)$ in \eqref{tesi}.
\end{theorem}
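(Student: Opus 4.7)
My approach is to quantify the $L^1$-localization (needle decomposition) that underlies the qualitative statement of Theorem \ref{theorem:LGM}. Given $E\subset X$ with $\mm(E)=v$, apply localization to the zero-mean function $f=\ind_E-v$ to obtain a disintegration $\mm=\int_Q\mm_\alpha\,d\qq(\alpha)$ with the following structure: for $\qq$-a.e.\ $\alpha$ the measure $\mm_\alpha$ is concentrated on a transport ray $X_\alpha$ isometric to a segment $[0,L_\alpha]\subset[0,\pi]$; its density $h_\alpha$ is $\CD(N-1,N)$ and hence proportional to $\sin^{N-1}(t+a_\alpha)$ for some $a_\alpha\ge 0$ with $a_\alpha+L_\alpha\le\pi$; the volume is split on each needle, $\mm_\alpha(E)=v$; and the perimeter satisfies $\PP(E)\ge\int_Q\PP_\alpha(E\cap X_\alpha)\,d\qq(\alpha)$, where $\PP_\alpha$ is the 1D perimeter induced by $h_\alpha$. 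This reduces the global isoperimetric problem to a one-parameter family of 1D problems on sine densities.

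\textbf{Quantitative 1D analysis.} The analytic heart is a sharp quantitative one-dimensional isoperimetric inequality for the above model densities: for $h(t)=c\sin^{N-1}(t+a)$ on $[0,L]$ normalized to mass one and a Borel set $A$ of $h$-measure $v$, some power of the 1D deficit $\delta_\alpha:=\PP_{h}(A)-\cI_{N-1,N,\pi}(v)$ should control both the \emph{shape deficit} $(\pi-L)+a$ and the $h$-measure distance between $A$ and a boundary arc $[0,r_N(v)]$ or $[L-r_N(v),L]$. The first control comes from Taylor-expanding the sine-density 1D profile around the optimal shape $L=\pi,\,a=0$; the second from a 1D symmetrization argument trading excess perimeter for $L^1$-distance to an extremal arc. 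The exponent $\eta=N/(N^2+2N-1)$ emerges from optimally interpolating between these two regimes. Integrating against $\qq$ and applying Cauchy--Schwarz, the global deficit $\PP(E)-\cI_{N-1,N,\pi}(v)$ then controls, in $\qq$-average, both $\pi-L_\alpha$ and the 1D distance of $E\cap X_\alpha$ from a boundary arc, so that a $\qq$-large set of needles is \emph{good}.

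\textbf{Global synthesis and the smooth improvement.} A single good needle is isometric to a segment of $X$ of length $\ge\pi-C\cdot\mathrm{deficit}^{1/N}$, immediately giving \eqref{tesi0}. The delicate step---and in my view the main obstacle---is globalizing the per-needle centering into a single point $\bar x$ for \eqref{tesi}: after a measurable choice on each good ray of which endpoint the arc sits near, a pigeonhole in $Q$ selects a $\qq$-majority orientation; these arc-endpoints must then cluster, because two good needles whose chosen endpoints are $\sfd$-far apart would, via the triangle inequality and essential non-branching, produce a ray between them on which both $L$ is near $\pi$ and yet the arc lies in the interior, contradicting the quantitative 1D shape estimate. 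Taking $\bar x$ inside this cluster and integrating the pointwise bound $\mm_\alpha\big((E\cap X_\alpha)\triangle(B_{r_N(v)}(\bar x)\cap X_\alpha)\big)$ against $\qq$ yields \eqref{tesi}. Finally, in the smooth Riemannian setting an isoperimetric $E$ has a smooth constant-mean-curvature reduced boundary, so Heintze--Karcher comparison upgrades the localization perimeter inequality to equality up to an error \emph{linear} in the deficit; this removes one Cauchy--Schwarz loss in the integration step and produces the sharper exponent $\eta=N/(N^2+N-1)$.
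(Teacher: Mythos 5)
Your overall strategy is aligned with the paper's: localize with respect to $E$ versus $E^c$, derive a sharp quantitative one-dimensional statement, and globalize via clustering of endpoints. However, the proposal contains a conceptual error and two significant gaps that would derail the argument as written.

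The central error is the claim that a $\CD(N-1,N)$ needle density $h_\alpha$ is ``proportional to $\sin^{N-1}(t+a_\alpha)$.'' This is false. The localization theorem only produces densities satisfying the differential inequality $\big(h^{1/(N-1)}\big)''+h^{1/(N-1)}\le 0$; these form a much larger class than the one-parameter model family (for instance, constants on short intervals, and more generally anything log-concave obeying the Sturm comparison). The needle density encodes the Jacobian of the transport flow and is not determined by curvature alone. Consequently the Taylor expansion around $(L,a)=(\pi,0)$ you propose does not make sense as stated. The paper instead proves a compactness/contradiction statement (Proposition~\ref{P:1dquant}) valid for \emph{all} $\CD(N-1,N)$ densities on $[0,D]$ with $D$ near $\pi$, supported by the pinching estimates in Proposition~\ref{P:estimatedensity} which show such densities converge locally uniformly to the model $h_N$ as $D\to\pi$. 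Your 1D step must be replaced with an argument of this kind.

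The globalization step (selecting a single $\bar x$) is the other serious gap. Your idea that two good needles with far-apart endpoints ``produce a ray between them'' does not work: the transport rays are fixed by the Kantorovich potential $\varphi$, and one cannot generate new rays by joining endpoints of existing ones. The paper's actual mechanism is twofold: (i) Lemma~\ref{L:deficit-endpoints}, using the $\sfd$-cyclical monotonicity of $\Gamma$ to compare every ray to a distinguished long ray $X_{\bar q}$, plus Proposition~\ref{P:antipodal} to conclude the endpoints cluster near two antipodal ``poles''; and (ii) a \emph{second localization} on a ball $B_r^S$ around the south pole (Proposition~\ref{P:nointerface}), which is needed to force a single orientation on the arcs $E_q$ (i.e., that $\min\{\qq(Q_\ell^S),\qq(Q_\ell^N)\}$ is small). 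This second localization is necessary precisely because the ambient space may lack convex neighborhoods on which to apply a relative isoperimetric inequality. A pigeonhole alone does not produce the required orientation bound, because a priori roughly half the rays could have $E_q$ near the south pole and half near the north pole, with no contradiction visible at the level of single needles.

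Finally, your explanation of the smooth improvement is also off target. The theorem applies to \emph{arbitrary} Borel sets $E$, so one cannot invoke CMC regularity of isoperimetric boundaries or Heintze--Karcher. The paper's reason for the better exponent in the smooth case (Remark~\ref{R:alexandrov}, Proposition~\ref{P:nointerface-Alexandrov}) is simply that a sufficiently small geodesic ball is convex and hence $(B_r^S,\sfd,\mm\llcorner_{B_r^S})$ is itself an essentially non-branching $\CD(N-1,N)$ space, allowing direct application of L\'evy--Gromov inside the ball; this removes one exponent loss compared with the extra localization needed in the non-smooth case.
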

Let us first discuss the first claim \eqref{tesi0} by recalling the  celebrated {\it Myers Theorem} \cite{Myers}:  if $(M,g)$ is an $N$-dimensional smooth Riemannian manifold with ${\rm Ric}_g\ge (N-1)g$, then $\diam(M)\le\pi$. A refined version of Myers Theorem involving the Levy-Gromov isoperimetric deficit very similar to  \eqref{tesi0} was established by Berard-Besson-Gallot \cite{BBG},  still in the framework of smooth Riemannian manifold with ${\rm Ric}_g\ge (N-1)g$.
\\Theorem  \ref{thm:CD} shall be seen as a further step in two directions: first of all the space is not assumed to be smooth, secondly not only an estimate on the diameter of the space is expressed in terms of the Levy-Gromov isoperimetric deficit but also a quantitative estimate on the closeness of the competitor subset to the metric ball is established in  \eqref{tesi}. 
Indeed, as the reader will realize, the claim \eqref{tesi0} will be set along the way of proving the much harder  \eqref{tesi}. 

Inequality \eqref{tesi} naturally fits in the context of quantitative isoperimetric inequalities. The basic result in this area is the improved Euclidean isoperimetric Theorem proved in \cite{FuMP}, and stating that if $E\subset\R^n$ is a Borel set of positive and finite volume, then there exists $\bar{x}\in\R^n$ such that
\begin{equation}
  \label{euclidean case}
  \frac{|E\Delta B_{r_E}(\bar{x})|}{|E|}\le C(N)\,\Big(\frac{\PP(E)}{\PP(B_{r_E})}-1\Big)^{1/2}
\end{equation}
where $r_E$ is such that $|B_{r_E}|=|E|$; see also \cite{FiMP,CL}. A closer estimate to \eqref{tesi} is the improved spherical isoperimetric Theorem from \cite{bogelainduzaarfusco2}: this result actually {\it is} \eqref{tesi} in the special case that $(X,\sfd,\mm)=(\mathbb{S}^N,g_{\mathbb{S}^N})$
but with the sharp exponent $\eta=1/2$. 

Taking variations in the broad context of metric measure spaces makes the prediction on the sharp exponent $\eta$ of \eqref{tesi} an hard task. Even formulating a conjecture is challenging question and at the present stage 
it could actually be that $\eta={\rm O}(1/N)$ as $N\to\infty$ is already sharp. In the direction of this guess, we notice that the exponent $1/N$ in \eqref{tesi0} is indeed optimal in the class of metric measure spaces, as a direct computation on the model 1-dimensional space \eqref{eq:Model1D} shows.
\\Another question we do not address here is the explicit dependence of the constant $C(N,v)$ in Theorem \ref{thm:CD} with respect to the parameters $N\in (1,\infty)$ and $v\in (0,1)$; we just observe that, for a fixed $N\in (1,\infty)$ and a fixed $v_{0}\in (0,1/2]$ one has $\sup_{v\in [v_{0}, 1-v_{0}]} C(N,v)<\infty$. 
\\

A challenging feature of Theorem \ref{thm:CD} is that none of the three general methods to approach quantitative isoperimetry seems applicable in this context. This is evident for the approach in \cite{FuMP}, based on symmetrization inequalities. The approach developed in \cite{CL} to address \eqref{euclidean case}, and used in \cite{bogelainduzaarfusco2} to prove \eqref{tesi} with $\eta=1/2$ in the case $(X,\sfd,\mm)=(\mathbb{S}^N,g_{\mathbb{S}^N})$, has a vast domain of applicability. Essentially, the approach of \cite{CL} has a reasonable chance to work on every variational problem with a sufficiently smooth regularity theory and with strictly stable minimizers. (Depending on the problem, it may be quite non-trivial to implement one of, or both, these two points.) In our context, of course, there are no regularity theories and no second variation formulae to be exploited. Finally, the approach to \eqref{euclidean case}, and more generally to the quantitative Wulff inequality, developed in \cite{FiMP} is based on the Gromov-Knothe proof of the (Wulff) isoperimetric inequality \cite{Knothe,MS}. But, at present day, proving isoperimetry with the Gromov-Knothe argument beyond the case of Euclidean spaces seems to be an open problem: for example, to the best of our knowledge, it is not know how to adapt the Gromov-Knothe argument for proving the isoperimetric theorem on, say, the sphere.
\\

Before discussing the main steps of the proof  of Theorem \ref{thm:CD}, it  is worth including notable examples of spaces fitting in the assumptions of the result. Let us stress that our main theorem seems new  in all of them.
The class of essentially non branching  $\CD(N-1,N)$ spaces includes many remarkable family of spaces, among them:
\begin{itemize}
\item \emph{Measured Gromov Hausdorff limits of Riemannian $N$-dimensional manifolds  satisfying  ${\rm Ric}_g\ge (N-1)g$ and more generally the class of $\RCD(N-1,N)$ spaces}.
Indeed measured Gromov Hausdorff limits of Riemannian $N$-manifolds satisfying  ${\rm Ric}_g\ge (N-1)g$  are examples of $\RCD(N-1,N)$ spaces (see for instance \cite{GMS2013}) and $\RCD(N-1,N)$ spaces are essentially non-branching $\CD(N-1,N)$ (see \cite{RS2014}).
\item \emph{Alexandrov spaces with curvature $\geq 1$}.
Petrunin \cite{PLSV} proved that the lower curvature bound in the sense of comparison triangles is compatible with the optimal transport type lower bound on the  Ricci curvature given by Lott-Sturm-Villani (see also \cite{zhangzhu}).  Moreover  geodesics in Alexandrov spaces with curvature bounded below do not branch. It follows that Alexandrov spaces with curvature bounded from below by $1$ are non-branching  $\CD(N-1,N)$ spaces.
\item \emph{Finsler manifolds where the norm on the tangent spaces is strongly convex, and which satisfy lower Ricci curvature bounds.} More precisely we consider a $C^{\infty}$-manifold  $M$, endowed with a function $F:TM\to[0,\infty]$ such that $F|_{TM\setminus \{0\}}$ is $C^{\infty}$ and  for each $p \in M$ it holds that $F_p:=T_pM\to [0,\infty]$ is a  strongly-convex norm, i.e.
$$\qquad \quad \; g^p_{ij}(v):=\frac{\partial^2 (F_p^2)}{\partial v^i \partial v^j}(v) \quad \text{is a positive definite matrix at every } v \in T_pM\setminus\{0\}. $$
Under these conditions, it is known that one can write the  geodesic equations and geodesics do not branch: in other words these spaces are non-branching.
We also assume $(M,F)$ to be geodesically complete and endowed with a $C^{\infty}$ probability measure $\mm$ in a such a way that the associated m.m.s. $(X,F,\mm)$ satisfies the $\CD(N-1,N)$ condition. This class of spaces has been investigated by Ohta \cite{Ohta} who established the equivalence between the Curvature Dimension condition and a Finsler-version of Bakry-Emery $N$-Ricci tensor bounded from below.
\end{itemize}

We  conclude the introduction by  briefly illustrating  the main steps in the  proof of Theorem \ref{thm:CD}. The starting point of our approach is the metric measured version of the classical {\it localization technique}. First introduced in the study of sharp Poincar\'e inequalities on convex domains by Payne and Weinberger \cite{PW}, the localization technique has been developed into a general dimension reduction tool for geometric inequalities in symmetric spaces in the works of Gromov-Milman \cite{GrMi}, Lov\'asz-Simonovits \cite{LoSi} and Kannan-Lov\'asz-Simonovits \cite{KaLoSi}. More recently, Klartag  \cite{klartag} bridged the localization technique with Monge-Kantorovich optimal transportation problem, extending the range of applicability of the method to general Riemannian manifolds. The extension to the metric setting was finally obtained in \cite{CM1}, see Section \ref{Ss:localization}.

Given $E\subset X$, the localization Theorem (Theorem \ref{T:localize}) gives a decomposition of $X$ into a family of one-dimensional sets  $\{X_{q}\}_{q \in Q}$ formed by the transport rays of a Kantorovich potential associated to the optimal transport of (the normalized restriction of $\mm$ to) $E$ into its complement in $X$; each $X_{q}$ 
is in particular isometric to a real interval. 
A first crucial property of such a decomposition is that each ray $X_{q}$ carries a  natural measure $\mm_{q}$ (given by the the Disintegration Theorem) in such a way that $(X_{q}, \sfd, \mm_{q})$ is a $\CD(N-1,N)$ space and $\mm_{q}(E\cap X_{q})=\mm(E)$ so that both the geometry of the space and the constraint of the problem are \emph{localized} into a family of one-dimensional spaces. 
A key ingredient used in the proof of such a decomposition is the essentially non-branching property  which, coupled with $\CD(K,N)$ (actually the weaker measure contraction would suffice here), guarantees that the rays form a partition of $X$ (up to an $\mm$-negligible set).   

As a first step, we observe that most of such rays are sufficiently long (Proposition \ref{P:Q2-2}). This shows the first part of Theorem \ref{thm:CD}, that is, estimate \eqref{tesi0} (see Theorem \ref{T:Q2-3}).

A second crucial property of the decomposition $\{X_{q}\}_{q \in Q}$, inherited by the variational nature of the construction, is the so-called cyclical monotonicity. This is key to show that 
most of the transport rays $X_{q}$ have their starting point close to a ``south pole'' $\bar{x}$, and end-up nearby a ``north pole'' $\bar{y}$ (in particular, the distance between $\bar{x}$ and $\bar{y}$ is close to $\pi$) (Corollary \ref{C:positionSN}).   Then  we  observe that a one-dimensional version of Theorem \ref{thm:CD} (see Section \ref{S:QII}) forces
most of the fibers  $E_{q}:=E \cap X_{q}$ (that is the intersection of $E$ with the corresponding one dimensional  element of the partition) to be  $\L^{1}$ close to intervals centered either
at the ``north pole'' or at the ``south pole'' of $X_{q}$ (Lemma \ref{L:rayNS}).
To conclude the argument,  a delicate step is to show that either most of the fibers $E_{q}$  are starting from  the south pole or most of them are starting from the north pole.
In the smooth setting the proof can be obtained using a relative isoperimetric inequality.
In our general framework we have to give a self-contained argument (to overcome the lack of convex neighborhoods) using an additional localization.
\\

\noindent
We conclude with a few additional remarks.

First, although Theorem \ref{thm:CD} is formulated for $\CD(N-1,N)$ spaces, a statement for $\CD(K,N)$ spaces with $K>0$ is easily obtained by scaling. Indeed, $(X,\sfd,\mm)$ satisfies $\CD(K,N)$ if and only if, for any $\alpha, \beta\in (0,\infty)$, the scaled metric measure space $(X,\alpha \sfd, \beta \mm)$ satisfies $\CD(\alpha^{-2} K, N)$; see \cite[Proposition 1.4]{sturm:II}.

Second, it would be interesting to understand quantitative isoperimetry in metric measure spaces in the regime $N\to\infty$. The question is motivated by the validity of dimension independent quantitative isoperimetric estimates on Gaussian spaces (see \cite{CFMPGauss,MN,BJ1,BJ2} for a full account on this problem), and, of course, it is beyond the reach of Theorem \ref{thm:CD} as the exponent $\eta$ in \eqref{tesi} vanishes as $N\to\infty$.

Third, we recall that in \cite[Corollary 1.6]{CM1} the first and third author have proved the convergence to a spherical suspension (in the metric measured Gromov-Hausdorff sense) of any sequence of spaces $(X_i,\sfd_i,\mm_i)$ satisfying the $\CD(N-1,N)$ condition and such that $\cI_{(X_i,\sfd_i,\mm_i)}(v)\to\cI_{N-1,N,\pi}(v)$ for a fixed $v\in(0,1)$. It seem not obvious, from this information alone, to deduce the convergence of isoperimetric regions $E_i$ with $\mm(E_i)=v\,\mm(X_i)$ to geodesic balls in $X_i$ with radius $r_N(v)$. Thus, \eqref{tesi} in Theorem \ref{thm:CD}, besides being a quantitative estimate, provides a new information even without taking rates of convergence into account.

Fourth, in the smooth Riemannian case it is tempting to guess that using regularity theory one can bootstrap the $L^{1}$-estimate \eqref{tesi} into a smooth $C^{k}$-estimate. We wish to stress that this seems not so trivial: indeed for such an argument one would need a fixed Riemannian metric (which in this context would be the round metric on the sphere) or at least uniform $C^{k}$-estimates on the Riemannian metrics; this seems too much to hope for, as already proving a quantitative measured-Gromov Hausdorff estimate (which in turn is weaker than $C^{0}$-closeness of the metrics) seems challenging. 

Finally, we notice that the wide range of functional inequalities that can be  proved via the localization technique (see, e.g., \cite{CM2}) suggests a broad range of applicability for the constructions described in this paper.

\bigskip

\noindent{\bf Acknowledgement:} This work was supported by the NSF Grants DMS-1565354 and DMS-1361122. 
\\A.M. is  supported by the EPSRC First Grant EP/R004730/1 ``Optimal transport and geometric analysis''.

\section{Background material}\label{S:back} In this section we recall the main constructions needed in the paper. The reader familiar with curvature-dimension conditions and metric-measure spaces will just need to check Sections \ref{Ss:localization} and \ref{Ss:L1OT} for the decomposition of $X$ into transport rays (localization) which is going to be used throughout the paper. In Section \ref{s:W2} we review geodesics in the Wasserstein distance, in Section \ref{Ss:geom} curvature-dimension conditions, and in Section \ref{Ss:isoperimetric}   isoperimetric inequalities in the metric setting.

\subsection{Geodesics in the $L^2$-Wasserstein distance}\label{s:W2} A triple $(X,\sfd, \mm)$ is a metric measure space, m.m.s. for short, if $(X,\sfd)$ is a complete and separable metric space and $\mm$ a Borel non negative measure over $X$. We shall always assume that $\mm(X) =1$. The space of all Borel probability measures over $X$ will be denoted by $\mathcal{P}(X)$, while $\mathcal{P}_{2}(X)$ stands for the space of probability measures with finite second moment. On the space $\mathcal{P}_{2}(X)$ we define the $L^{2}$-Wasserstein distance $W_{2}$, by setting, for $\mu_0,\mu_1 \in \mathcal{P}_{2}(X)$,
\begin{equation}\label{eq:Wdef}
  W_2(\mu_0,\mu_1)^2 = \inf_{ \pi} \int_{X\times X} \sfd^2(x,y) \, \pi(dxdy)\,.
\end{equation}
Here the infimum is taken over all $\pi \in \mathcal{P}(X \times X)$ with $\mu_0$ and $\mu_1$ as the first and the second marginal, i.e. $(P_{1})_{\sharp} \pi= \mu_{0},  (P_{2})_{\sharp} \pi= \mu_{1}$. Of course $P_{i}, i=1,2$ is the projection on the first (resp. second) factor and $(P_{i})_{\sharp}$ denotes the corresponding push-forward map on measures.  As $(X,\sfd)$ is complete, also $(\mathcal{P}_{2}(X), W_{2})$ is complete.

Denote the space of geodesics of $(X,\sfd)$ by
$$
\Geo(X) : = \big\{ \gamma \in C([0,1], X):  \sfd(\gamma_{s},\gamma_{t}) = |s-t| \sfd(\gamma_{0},\gamma_{1}), \text{ for every } s,t \in [0,1] \big\}.
$$
Recall that a metric space is a geodesic space if and only if for each $x,y \in X$ there exists $\gamma \in \Geo(X)$ so that $\gamma_{0} =x, \gamma_{1} = y$. A basic fact on the $L^{2}$-Wasserstein distance, is that if $(X,\sfd)$ is geodesic, then $(\mathcal{P}_2(X), W_2)$ is geodesic.
Any geodesic $(\mu_t)_{t \in [0,1]}$ in $(\mathcal{P}_2(X), W_2)$  can be lifted to a measure $\nu \in {\mathcal {P}}(\Geo(X))$,
so that $({\rm e}_t)_\sharp \, \nu = \mu_t$ for all $t \in [0,1]$.
Here for any $t\in [0,1]$,  ${\rm e}_{t}$ denotes the evaluation map:
$$
  {\rm e}_{t} : \Geo(X) \to X, \qquad {\rm e}_{t}(\gamma) : = \gamma_{t}.
$$
Given $\mu_{0},\mu_{1} \in \mathcal{P}_{2}(X)$, we denote by
$\Opt(\mu_{0},\mu_{1})$ the space of all $\nu \in \mathcal{P}(\Geo(X))$ for which $({\rm e}_0,{\rm e}_1)_\sharp\, \nu$
realizes the minimum in \eqref{eq:Wdef}. If $(X,\sfd)$ is geodesic, then the set  $\Opt(\mu_{0},\mu_{1})$ is non-empty for any $\mu_0,\mu_1\in \mathcal{P}_2(X)$.

A set $F \subset \Geo(X)$ is a set of non-branching geodesics if and only if for any $\gamma^{1},\gamma^{2} \in F$, it holds:
$$
\exists \;  \bar t\in (0,1) \text{ such that } \ \forall t \in [0, \bar t\,] \quad  \gamma_{ t}^{1} = \gamma_{t}^{2}
\quad
\Longrightarrow
\quad
\gamma^{1}_{s} = \gamma^{2}_{s}, \quad \forall s \in [0,1].
$$
(Recall that a measure $\nu$ on a measurable space $(\Omega,\mathcal{F})$ is said to be concentrated
on $A \subset \Omega$ if $\exists B \subset A$ with $B \in \mathcal{F}$ so that $\nu(\Omega \setminus B) = 0$.) With this terminology, we recall from \cite{RS2014} the following definition.

\begin{definition}\label{D:essnonbranch}
A metric measure space $(X,\sfd, \mm)$ is \emph{essentially non-branching} if and only if for any $\mu_{0},\mu_{1} \in \mathcal{P}_{2}(X)$,
with $\mu_{0},\mu_{1}$ absolutely continuous with respect to $\mm$, any element of $\Opt(\mu_{0},\mu_{1})$ is concentrated on a set of non-branching geodesics.
\end{definition}

\subsection{Curvature-dimension conditions for metric measure spaces}\label{Ss:geom} The $L^2$-transport structure just described allows to formulate a generalized notion of Ricci curvature lower bound coupled with a dimension upper bound in the context of metric measure spaces. This is the $\CD(K,N)$ condition introduced in the seminal works of Sturm \cite{sturm:I, sturm:II} and Lott--Villani \cite{lottvillani:metric}, which here is reviewed only for a m.m.s. $(X,\sfd,\mm)$ with $\mm \in \mathcal{P}(X)$ and for $K > 0$ and $1 <N<\infty$ (the basic setting of the present paper).

For $N \in (1,\infty)$, the {\it $N$-R\'enyi relative-entropy functional}
$\mathcal{E}_N : \mathcal{P}(X) \rightarrow [0,1]$ is defined as
\[
\mathcal{E}_N(\mu) := \int \rho^{1 - \frac{1}{N}} d\mm \,,
\]
where $\mu = \rho \mm + \mu^{sing}$ is the Lebesgue decomposition of $\mu$ with $\mu^{sing} \perp \mm$.

\begin{definition}[$\tau_{K,N}$-coefficients] \label{def:sigma}
Given $K\in(0,\infty)$, $N \in(1,\infty)$, and $t \in [0,1]$, define $\sigma^{(t)}_{K,N}:[0,\infty)\to[0,\infty]$ by setting $\sigma^{(t)}_{K,N}(0) = t$,
$$
\sigma^{(t)}_{K,N}(\theta) := \frac{\sin(t \theta \sqrt{\frac{K}{N}})}{\sin(\theta \sqrt{\frac{K}{N}})}\qquad 0 < \theta < \frac{\pi}{\sqrt{K/N}}\,.
$$
and $\sigma^{(t)}_{K,N}(\theta) = +\infty$ otherwise; and define
\[
\tau_{K,N}^{(t)}(\theta) := t^{\frac{1}{N}} \sigma_{K,N-1}^{(t)}(\theta)^{1 - \frac{1}{N}}\,.
\]
\end{definition}

\begin{definition}[$\CD(K,N)$] \label{def:CDKN}
A m.m.s. $(X,\sfd,\mm)$ is said to satisfy $\CD(K,N)$ if for all $\mu_0,\mu_1 \in \mathcal{P}_2(X,\sfd,\mm)$,
there exists $\nu \in \Opt(\mu_0,\mu_1)$ so that for all $t\in[0,1]$, $\mu_t := ({\rm e}_{t})_{\#} \nu \ll \mm$, and for all $N' \geq N$:
\begin{equation} \label{eq:CDKN-def}
\mathcal{E}_{N'}(\mu_t) \geq \int_{X \times X} \left( \tau^{(1-t)}_{K,N'}(\sfd(x_0,x_1)) \rho_0^{-1/N'}(x_0) + \tau^{(t)}_{K,N'}(\sfd(x_0,x_1)) \rho_1^{-1/N'}(x_1) \right) \pi(dx_0,dx_1) ,
\end{equation}
where $\pi = ({\rm e}_0,{\rm e}_1)_{\sharp}(\nu)$ and $\mu_i = \rho_i \mm$, $i=0,1$.
\end{definition}

If $(X,\sfd,\mm)$ verifies the $\CD(K,N)$ condition then the same is valid for $(\supp[\mm],\sfd,\mm)$; hence we directly assume $X = \supp[\mm]$.

The following pointwise density inequality is a known equivalent definition of $\CD(K,N)$ on essentially non-branching spaces (the equivalence follows from  \cite{CM3}, see also \cite[Proposition 4.2]{sturm:II}).

\begin{definition}[$\CD(K,N)$ for essentially non-branching spaces] \label{def:CDKN-ENB}
An essentially non-branching m.m.s. $(X,\sfd,\mm)$ satisfies $\CD(K,N)$ if and only if for all $\mu_0,\mu_1 \in \mathcal{P}_2(X,\sfd,\mm)$, there exists a unique $\nu \in \Opt(\mu_0,\mu_1)$, $\nu$ is induced by a map (i.e. $\nu = S_{\sharp}(\mu_0)$ for some map $S : X \rightarrow \Geo(X)$),  $\mu_t := ({\rm e}_t)_{\#} \nu \ll \mm$ for all $t \in [0,1]$, and writing $\mu_t = \rho_t \mm$, we have for all $t \in [0,1]$:
\[
\rho_t^{-1/N}(\gamma_t) \geq  \tau_{K,N}^{(1-t)}(\sfd(\gamma_0,\gamma_1)) \rho_0^{-1/N}(\gamma_0) + \tau_{K,N}^{(t)}(\sfd(\gamma_0,\gamma_1)) \rho_1^{-1/N}(\gamma_1) \;\;\; \text{for $\nu$-a.e. $\gamma \in \Geo(X)$} .
\]
\end{definition}

For the general definition of $\CD(K,N)$ see \cite{lottvillani:metric, sturm:I, sturm:II}. It is worth recalling that if $(M,g)$ is a Riemannian manifold of dimension $n$ and
$h \in C^{2}(M)$ with $h > 0$, then the m.m.s.  $(M,\sfd_{g},h \, vol)$  verifies $\CD(K,N)$ with $N\geq n$ if and only if  (see Theorem 1.7 of \cite{sturm:II})
$$
Ric_{g,h,N} \geq  K g, \qquad Ric_{g,h,N} : =  Ric_{g} - (N-n) \frac{\nabla_{g}^{2} h^{\frac{1}{N-n}}}{h^{\frac{1}{N-n}}},
$$
 in other words if and only if the weighted Riemannian manifold $(M,g, h \, vol)$ has Bakry-\'Emery Ricci tensor bounded below by $K$.
Note that if $N = n$ the  Bakry-\'Emery  Ricci tensor $Ric_{g,h,N}= Ric_{g}$ makes sense only if $h$ is constant.
\medskip

We will use several times also the following terminology:
a non-negative function $h$ defined on an interval $I \subset \R$ is called a $\CD(K,N)$ density on $I$, for $K \in \R$ and $N \in (1,\infty)$, if for all $x_0,x_1 \in I$ and $t \in [0,1]$:
\begin{equation}\label{E:1dCD}
 h(t x_1 + (1-t) x_0)^{\frac{1}{N-1}} \geq  \sigma^{(t)}_{K,N-1}(| x_1-x_0|) h(x_1)^{\frac{1}{N-1}} + \sigma^{(1-t)}_{K,N-1}(|x_1-x_0|) h(x_0)^{\frac{1}{N-1}} ,
\end{equation}
(recalling the coefficients $\sigma$ from Definition \ref{def:sigma}).

The link with the definition of $\CD(K,N)$ for m.m.s. can be summarized as follows (\cite[Theorem A.2]{CMi}):
if $h$ is a $\CD(K,N)$ density on an interval $I \subset \R$ then the m.m.s. $(I,|\cdot |,h(t) dt)$ verifies $\CD(K,N)$; conversely, if the m.m.s. $(\R,|\cdot |,\mu)$
verifies $\CD(K,N)$ and $I = \supp(\mu)$ is not a point, then $\mu \ll \L^1$ and there exists a representant of the density $h = d\mu / d\L^1$ which is a $\CD(K,N)$ density on $I$.

In particular, if $I \subset \R$ is any interval, $h \in C^{2}(I)$, the m.m.s. $(I ,|\cdot|, h(t) dt)$ verifies $\CD(K,N)$ if and only if
\begin{equation}\label{E:CD-N-1}
\left(h^{\frac{1}{N-1}}\right)'' + \frac{K}{N-1}h^{\frac{1}{N-1}} \leq 0;
\end{equation}
see also Appendix \ref{Appendix A} for furhter properties of $\CD(K,N)$ densities.

\medskip

The lack of the local-to-global property of the $\CD(K,N)$ condition (for $K/N \neq 0$)
led in 2010 Bacher and Sturm to introduce in \cite{BS10} the reduced curvature-dimension condition, denoted by $\CD^{*}(K,N)$.
The $\CD^{*}(K,N)$ condition asks for the same inequality \eqref{eq:CDKN-def} of $\CD(K,N)$ to hold but
the coefficients $\tau_{K,N}^{(s)}(\sfd(\gamma_{0},\gamma_{1}))$ are replaced by the slightly smaller $\sigma_{K,N}^{(s)}(\sfd(\gamma_{0},\gamma_{1}))$.

A subsequent breakthrough in the theory was obtained with the introduction of the Riemannian curvature dimension condition $\RCD^{*}(K,N)$:
in the infinite dimensional case $N = \infty$  was introduced in \cite{AGS11b} for finite measures $\mm$ and in \cite{AGMR12} for $\sigma$-finite ones.
The class $\RCD^{*}(K,N)$ with $N<\infty$ (technically more involved) has been proposed in \cite{gigli:laplacian} and extensively investigated
in \cite{EKS,AMS}. We refer to these papers and references therein for a general account
on the synthetic formulation of the latter Riemannian-type Ricci curvature lower bounds.
Here we only briefly recall that it is a stable strengthening of the reduced curvature-dimension condition:
a m.m.s. verifies $\RCD^{*}(K,N)$ if and only if it satisfies $\CD^{*}(K,N)$ and is infinitesimally Hilbertian \cite[Definition 4.19 and Proposition 4.22]{gigli:laplacian}, 
meaning that the Sobolev space $W^{1,2}(X,\mm)$ is a Hilbert space (with the Hilbert structure induced by the Cheeger energy).

To conclude we recall also that recently, the first named author together with E. Milman  in \cite{CMi} proved the equivalence
of $\CD(K,N)$ and $\CD^{*}(K,N)$ (and also of the $\CD^{e}(K,N)$ and $\CD^{1}(K,N)$),
together with the local-to-global property for $\CD(K,N)$, in the framework of  essentially non-branching m.m.s. having $\mm(X) < \infty$.
As we will always assume the aforementioned properties to be satisfied by our ambient m.m.s. $(X,\sfd,\mm)$, we will use both formulations with no distinction.
It is worth also mentioning that a m.m.s. verifying $\RCD^{*}(K,N)$ is essentially non-branching (see \cite[Corollary 1.2]{RS2014})
implying also the equivalence of $\RCD^{*}(K,N)$ and  $\RCD(K,N)$ (see \cite{CMi} for details).

We shall always assume that the m.m.s. $(X,\sfd,\mm)$ is essentially non-branching and satisfies $\CD(K,N)$ from some $K>0$
with $\supp(\mm) = X$. It follows that $(X,\sfd)$ is a geodesic and compact metric space.

\subsection{Isoperimetric inequality for metric measure spaces}\label{Ss:isoperimetric}

In \cite{CM1} the L\'evy-Gromov-Milman isoperimetric inequality has been obtained for an essentially non-branching m.m.s. $(X,\sfd,\mm)$ verifying $\CD(K,N)$
with $\mm(X) = 1$; also the rigidity statement has been obtained in the smaller class of $\RCD(K,N)$ spaces.
What follows is a short overview of the statements as obtained in the
subsequent \cite{CM5} where the results of \cite{CM1} are obtained replacing the outer Minkowski content with the perimeter functional;
see also \cite{ADMG17}  for the general relation between the outer Minkowski content with the perimeter functional.

Denote by $\Lip(X)$ the space of real-valued Lipschitz functions over $X$. Given $u \in \Lip(X)$ its slope $|\nabla u|(x)$ at $x\in X$ is defined by
\begin{equation}
|\nabla u|(x):=\limsup_{y\to x} \frac{|u(x)-u(y)|}{\sfd(x,y)}.
\end{equation}
Following \cite{Am1,Am2,Mir} and the more recent \cite{ADM},
given a Borel subset $E \subset X$ and $A$ open, the perimeter of $E$ relative to $A$ is denoted by $\mathsf{P}(E,A)$ and is defined as follows
\begin{eqnarray}
\PP(E,A)&:=& \inf\left\{\liminf_{n\to \infty} \int_A |\nabla u_n| \,\mm \,:\,  u_n \in \Lip(A), \, u_n\to \chi_E \text{ in } L^1(A,\mm)\right\}.  \label{eq:defP}
\end{eqnarray}
We say that $E \subset X$ has finite perimeter in $X$ if $\PP(E,X) < \infty$.
We recall also few properties of the perimeter functions:
\begin{itemize}
\item[(a)] (locality) $\PP(E,A) = \PP(F,A)$, whenever $\mm((E\Delta F) \cap A) = 0$;
\item[(b)] (l.s.c.) the map $E \mapsto \PP(E,A)$ is lower-semicontinuous with respect to the $L^{1}_{loc}(A)$ convergence;
\item[(c)] (complementation) $\PP(E,A) = \PP(E^{c},A)$.
\end{itemize}
Moreover if $E$ is a set of finite perimeter, then the set function $A \to \PP(E,A)$ is the restriction to open sets of a finite
Borel measure $\PP(E,\cdot)$ in $X$ (see Lemma 5.2 of \cite{ADM}), defined by
$$
\PP(E,B) : = \inf \{ \PP(E,A) \colon A \supset B, \ A\ \textrm{open} \}.
$$
Sometimes, for ease of notation, we will write $\PP(E)$ instead of $\PP(E,X)$.

\medskip

The \emph{isoperimetric profile function} of $(X,\sfd,\mm)$, denoted by  ${\cI}_{(X,\sfd,\mm)}$,
is defined as the point-wise maximal function so that $\PP(A)\geq \cI_{(X,\sfd,\mm)}(\mm(A))$ for every Borel set $A \subset X$, that is
\begin{equation}
  \label{isoperimetric profile mms}
  \cI_{(X,\sfd,\mm)}(v) : = \inf \big\{ \PP(A) \colon A \subset X \, \textrm{ Borel}, \, \mm(A) = v   \big\}.
\end{equation}

\begin{theorem}[L\'evy-Gromov-Milman in $\CD(K,N)$-spaces, \cite{CM1, CM5}]\label{theorem:LGM}
Let $(X,\sfd,\mm)$ be an essentially non-branching metric measure space with $\mm(X)=1$ and having diameter  $D\in (0,+\infty]$.
Assume it satisfies the $\CD(K,N)$ condition  for some $K\in \R, N \in (1,\infty)$. Then  for every Borel set $E\subset X$ it holds
$$
\PP(E)\geq \cI_{K,N,D}(\mm(E)),
$$
where $\cI_{K,N,D}$ are the model isoperimetric profile functions obtained in  \cite{Mil}, i.e.
 $\cI_{(X,\sfd,\mm)}(v)\geq \cI_{K,N,D}(v)$ for every $v \in [0,1]$.
\medskip

If $(X,\sfd,\mm)$ satisfies $\RCD(N-1,N)$ for some  $N \in [2,\infty)$
and there exists $\bar{v} \in (0,1)$ such that $\cI_{(X,\sfd,\mm)}(\bar{v})=\cI_{N-1,N,\infty}(\bar{v})$,
then $(X,\sfd,\mm)$ is a spherical suspension:  there exists
an $\RCD(N-2,N-1)$ space $(Y,\sfd_{Y}, \mm_{Y})$ with $\mm_{Y}(Y)=1$ such that  $X$ is isomorphic as metric measure space to $[0,\pi] \times^{N-1}_{\sin} Y$.
\end{theorem}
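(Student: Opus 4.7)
\medskip

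\noindent\textbf{Proof plan.} The strategy for both the isoperimetric inequality and the rigidity statement rests on the $L^1$-optimal transport localization technique (Theorem \ref{T:localize}), which reduces $N$-dimensional isoperimetric questions under a $\CD(K,N)$ bound to one-dimensional questions under a $\CD(K,N)$ density bound on intervals.

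For the inequality, fix a Borel set $E\subset X$ with $\mm(E)=v$ and set $f:=\chi_E-v\,\chi_X$, which has zero $\mm$-mean. I apply the localization theorem to $f$: this produces a quotient space $(Q,\qq)$, a family of transport rays $\{X_q\}_{q\in Q}$, and a disintegration $\mm=\int_Q\mm_q\,d\qq(q)$ such that (i) each $(X_q,\sfd,\mm_q)$ is a one-dimensional $\CD(K,N)$ space whose length is at most $D$, (ii) $\int_{X_q}f\,d\mm_q=0$, i.e.\ $\mm_q(E\cap X_q)=v\,\mm_q(X_q)$ for $\qq$-a.e.\ $q$, and (iii) the perimeter disintegrates in the sense that
\[
\PP(E)\ \geq\ \int_Q \PP_q(E\cap X_q)\,d\qq(q),
\]
where $\PP_q$ denotes the perimeter functional in the metric measure space $(X_q,\sfd,\mm_q)$. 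Inequality (iii) is established by approximating $\chi_E$ with Lipschitz functions $u_n\to\chi_E$ in $L^1$, disintegrating $\int|\nabla u_n|\,d\mm$ along the rays, passing to the $\liminf$, and applying the definition of relative perimeter on each ray.

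The core one-dimensional input is Milman's sharp comparison: for any $\CD(K,N)$ density on an interval of length at most $D$ and total mass $1$, the isoperimetric profile pointwise dominates $\cI_{K,N,D}$; this is precisely the definition of $\cI_{K,N,D}$ as the infimum over the class of such one-dimensional model densities. Applying this fiberwise gives $\PP_q(E\cap X_q)\geq \cI_{K,N,\diam(X_q)}(v)\geq \cI_{K,N,D}(v)$, whence integration over $\qq$ (which has unit mass) yields $\PP(E)\geq \cI_{K,N,D}(v)$, proving the first claim.

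For the rigidity statement, assume $(X,\sfd,\mm)$ satisfies $\RCD(N-1,N)$ with $N\in[2,\infty)$ and $\cI_{(X,\sfd,\mm)}(\bar v)=\cI_{N-1,N,\infty}(\bar v)$. By lower semicontinuity of perimeter and compactness of sublevel sets of $\PP$ in $L^1(\mm)$ (here using $\mm(X)=1$ and the $\RCD$ structure) one obtains an isoperimetric minimizer $E$ with $\mm(E)=\bar v$. Applying the localization above to this $E$, equality in the integrated bound forces equality $\qq$-a.e.\ on the fibers: for $\qq$-a.e.\ $q$, the one-dimensional profile of $(X_q,\sfd,\mm_q)$ attains the model value at $\bar v$. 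The known one-dimensional rigidity for $\CD(N-1,N)$ densities then forces each such $\mm_q$ to coincide with the model density $c_q\sin^{N-1}(t)\,dt$ on a maximal ray of length exactly $\pi$, with $E\cap X_q$ a geodesic halfball from one of the two endpoints. Thus $\diam(X)=\pi$ and all rays have length $\pi$. The final step, and the main obstacle, is to upgrade this fiberwise structure to the statement that $X$ is isometric to the spherical suspension $[0,\pi]\times^{N-1}_{\sin} Y$ of some $\RCD(N-2,N-1)$ space; here the infinitesimal Hilbertianity of the $\RCD$ condition is essential, as the $\CD$ hypothesis alone admits Finsler-type equality cases that are \emph{not} suspensions. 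One argues that the ray endpoints collapse to two antipodal points $\bar x,\bar y$ (using that any two maximal rays must share their endpoints, by cyclical monotonicity combined with $\diam(X)=\pi$ and Myer/Bonnet-Myer-type bounds in $\RCD(N-1,N)$), and then invokes the Ketterer-type characterization of spherical suspensions by the existence of a function with prescribed Hessian $\sfd(\bar x,\cdot)$, combined with the $\RCD(N-2,N-1)$ structure on the level set $\{\sfd(\bar x,\cdot)=\pi/2\}$ inherited by localization of $\mm$ transverse to the rays.
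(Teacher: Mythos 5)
Your proposal takes the same route as the argument this paper relies on: Theorem \ref{theorem:LGM} is not proved here but quoted from \cite{CM1, CM5}, and the proof there is exactly the scheme you describe — localize $f=\chi_E-\mm(E)$ via Theorem \ref{T:localize} into one-dimensional $\CD(K,N)$ rays carrying the volume constraint, disintegrate the perimeter by the Fatou-type argument (which reappears in Section \ref{s:reductionto1d} of this paper), apply Milman's one-dimensional comparison fiberwise, and, for rigidity, force $\qq$-a.e.\ ray to be the model $\sin^{N-1}$ density of length $\pi$ so that $\diam(X)=\pi$ and Ketterer's maximal diameter theorem in the $\RCD$ class yields the spherical suspension. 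The only slight imprecision is that the domination of the profile of every $\CD(K,N)$ density on an interval of length at most $D$ by $\cI_{K,N,D}$ is Milman's one-dimensional theorem rather than the definition of $\cI_{K,N,D}$ (which is defined through the explicit translated $\sin^{N-1}$ model densities), but this does not affect the validity of your outline.
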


As reported above, the model spaces for general $K,N$ have been discovered by E. Milman \cite{Mil} who extended the L\'evy-Gromov isoperimetric inequality to smooth manifolds with densities, i.e. smooth Riemannian manifold whose volume measure has been multiplied by a smooth non negative integrable density function.
Milman detected a model isoperimetric profile  $\cI_{K,N,D}$ such that if a Riemannian manifold with density has diameter at most $D>0$, generalized Ricci curvature at least $K\in \R$ and generalized dimension at most $N\geq 1$ then the isoperimetric profile function of the weighted manifold is bounded below by   $\cI_{K,N,D}$.

During the paper, we will make extensive use of of $\cI_{K,N,D}$, at least in the case $K>0$;
we now therefore review their definitions (and refer to \cite{Mil} for all the details, see in particular Theorem 1.2 and Corollary A.3):
\begin{itemize}

\item \textbf{Case 1}: $K>0$ and $D<\sqrt{\frac{N-1}{K}} \pi$,
$$
\cI_{K,N,D}(v) =   \min_{\xi \in \big[0, \sqrt{\frac{N-1}{K}} \pi -D\big]} \cI_{\big( [\xi,\xi +D],  \sin( \sqrt{\frac{K}{N-1}} t)^{N-1} \big)}(v), \quad \forall v \in [0,1] ~;
$$

\item \textbf{Case 2}:   $K > 0$ and $D \geq \sqrt{\frac{N-1}{K}} \pi$,
$$
\cI_{K,N,D}(v) = \cI_{ \big( [0, \sqrt{\frac{N-1}{K}} \pi],  \sin( \sqrt{\frac{K}{N-1}} t)^{N-1}   \big)}(v), \quad \forall v \in [0,1] ~;
$$
\end{itemize}
where in both cases we have used the following notation: given $f$ on a closed interval $L \subset \R$, we denote with $\mu_{f,L}$
the probability measure supported in $L$ with density (with respect to the Lebesgue measure) proportional to $f$ there and
$\cI_{(L,f)}$ stands for $\cI_{(L,\, |\cdot|, \mu_{f,L})}$.
Note that when $N$ is an integer,
$$
\cI_{\big( [0,  \sqrt{\frac{N-1}{K}} \pi ], ( \sin(\sqrt{\frac{K}{N-1} } t)^{N-1}\big)} = \cI_{({\mathbb S}^{N}, g^K_{can}, \mu^K_{can})},
$$
by the isoperimetric inequality on the sphere, and so Case 2 with $N$ integer corresponds to L\'evy-Gromov isoperimetric inequality.
  \\ In order to keep the notation short we will often write $\cI_{D}$ in place of $\cI_{N-1, N,D}$.


\subsection{Localization}\label{Ss:localization}
Theorem \ref{theorem:LGM} has been proved obtaining a dimensional reduction of the isoperimetric inequality via the so-called ``Localization theorem'',
proved for essentially non-branching metric measure spaces verifying the $\CD(K,N)$ condition.

The localization theorem has its roots in a work of  Payne-Weinberger \cite{PW} and has been developed by Gromov-Milman \cite{GrMi}, Lov\'asz-Simonovits \cite{LoSi} and Kannan-Lov\'asz-Simonovits \cite{KaLoSi}, and consists in reducing an $n$-dimensional problem, via tools of convex geometry,
to lower dimensional problems that one can handle.  In the previous papers the symmetric properties of the spaces were necessary to obtain such a dimensional reduction.
In the recent paper \cite{klartag}, Klartag found a bridge  between $L^1$-optimal transportation problems and the localization techinque yielding the localization theorem in the framework of smooth Riemannian manifolds.
Inspired by this approach, the first and the third author in \cite{CM1}
proved the following localization theorem for essentially non-branching metric measure spaces verifying the $\CD(K,N)$ condition.

\begin{theorem}[\cite{CM1}]\label{T:localize}
Let $(X,\sfd, \mm)$ be an essentially non-branching metric measure space verifying the $\CD(K,N)$ condition for some $K\in \R$ and $N\in [1,\infty)$.
Let $f : X \to \R$ be $\mm$-integrable such that $\int_{X} f\, \mm = 0$ and assume the existence of $x_{0} \in X$ such that $\int_{X} | f(x) |\,  \sfd(x,x_{0})\, \mm(dx)< \infty$.
\medskip

Then the space $X$ can be written as the disjoint union of two sets $Z$ and $\mathcal{T}$ with $\mathcal{T}$ admitting a partition
$\{ X_{q} \}_{q \in Q}$ and a corresponding disintegration of $\mm\llcorner_{\mathcal{T}}$, $\{\mm_{q} \}_{q \in Q}$ such that:

\begin{itemize}
\item For any $\mm$-measurable set $B \subset \mathcal{T}$ it holds
$$
\mm(B) = \int_{Q} \mm_{q}(B) \, \qq(dq),
$$
where $\qq$ is a probability measure over $Q$ defined on the quotient $\sigma$-algebra $\mathcal{Q}$.
\medskip
\item For $\qq$-almost every $q \in Q$, the set $X_{q}$ is a geodesic and $\mm_{q}$ is supported on it.
Moreover $q \mapsto \mm_{q}$ is a $\CD(K,N)$ disintegration.
\medskip
\item For $\qq$-almost every $q \in Q$, it holds $\int_{X_{q}} f \, \mm_{q} = 0$ and $f = 0$ $\mm$-a.e. in $Z$.
\end{itemize}
\end{theorem}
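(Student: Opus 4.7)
My plan is to follow Klartag's $L^1$-optimal transport approach, adapted to essentially non-branching $\CD(K,N)$ spaces. First, write $f = f^+ - f^-$; the hypotheses $\int_X f\, \mm = 0$ and $\int_X |f|\, \sfd(\cdot,x_0)\, \mm < \infty$ ensure that, provided $f \not\equiv 0$ (otherwise set $Z=X$), the normalized signed parts $\mu_{\pm}$ are probability measures with finite first moment. Consider the Monge--Kantorovich problem with cost $\sfd(x,y)$ between $\mu_+$ and $\mu_-$. Kantorovich duality produces a $1$-Lipschitz potential $\varphi:X\to\R$, and any optimal plan concentrates on the $\varphi$-monotone set $\Gamma_\varphi:=\{(x,y):\varphi(x)-\varphi(y)=\sfd(x,y)\}$.

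Next I would build the transport structure. Let the \emph{transport set} $\mathcal{T}\subset X$ be the set of points lying on some non-trivial $\varphi$-ray---a geodesic arc on which $\varphi$ decreases at unit speed. A key intermediate fact, to be carried out in the metric setting, is that after removing an $\mm$-negligible ``branching'' subset, the relation induced by $\Gamma_\varphi$ becomes an equivalence relation on $\mathcal{T}$ whose classes are the maximal $\varphi$-rays $X_q$. The resulting quotient map $\mathcal{T}\to Q$ is measurable, so the disintegration theorem yields a probability $\qq$ on $Q$ and conditionals $\mm_q$ supported on $X_q$ with $\mm\llcorner_{\mathcal{T}}=\int_Q \mm_q\,\qq(dq)$. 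Outside $\mathcal{T}$ no mass is moved, so $f=0$ $\mm$-a.e.\ on $Z := X\setminus\mathcal{T}$. The fiberwise balance $\int_{X_q} f\,\mm_q = 0$ follows because the $L^1$-optimal plan moves mass only within rays, so the global constraint $\int_X f\,\mm = 0$ disintegrates along $\qq$.

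The deepest point---and the main obstacle---is the $\CD(K,N)$ property of the conditional measures. The strategy is to test the ambient $\CD(K,N)$ inequality of Definition \ref{def:CDKN} on pairs of probability measures of the form $\mm\llcorner_{A_i}/\mm(A_i)$, where $A_0,A_1$ are thin ``tubular neighborhoods'' inside a small bundle of rays selected via $q\mapsto X_q$. Essential non-branching enters crucially: it guarantees uniqueness of optimal transport maps between absolutely continuous measures (Definition \ref{def:CDKN-ENB}), and it ensures that the unique $W_2$-geodesic between the test measures stays supported on the same ray bundle, so that the interpolated measures themselves disintegrate along the ray partition. Applying the ambient $\CD(K,N)$ inequality, decomposing along $\qq$ by Fubini, and letting the transverse size of the tubes shrink then forces the conditionals $\mm_q$ to satisfy the one-dimensional density inequality \eqref{E:1dCD} for $\qq$-a.e.\ $q$. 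Making this rigorous---in particular, carrying out the measurable selection of test tubes and the transverse limit without leaking mass across rays---is where the bulk of the work lies, and is the reason why the hypothesis of essential non-branching cannot be dispensed with.
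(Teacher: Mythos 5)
Your proposal follows essentially the same route as the paper, which itself only sketches this construction in Section \ref{Ss:L1OT} and defers the full argument to \cite{CM1} (following Klartag's bridge between $L^1$-optimal transport and localization): the potential $\f$ and the set $\Gamma$, removal of the $\mm$-negligible branching sets $A_{\pm}$, the Disintegration Theorem applied to the equivalence classes of $R^b$, the almost-by-construction fiberwise balance $\int_{X_q} f\,\mm_q=0$, and the $\CD(K,N)$ property of the conditionals obtained by testing the ambient convexity inequality on measures supported on bundles of rays, with essential non-branching forcing the $W_2$-interpolation to respect the ray partition. Your sketch correctly identifies this last step as the genuinely hard one (it is \cite[Theorem 4.2]{CM1}), and your outline of how to carry it out is consistent with the cited proof.
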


We refer to Appendix \ref{Appendix B} for the Disintegration Theorem and its link with partitions of the space.
Here we only mention that $q \mapsto \mm_{q}$ is a $\CD(K,N)$ disintegration has to be understood as follows:
for $\qq$-a.e. $q \in Q$, $\mm_{q} = h_{q} \mathcal{H}^{1}\llcorner_{X_{q}}$, where $\mathcal{H}^{1}$ denotes the one-dimensional Hausdorff measure
and $h_{q} \circ X_{q}$  is a $\CD(K,N)$ density, in the sense of \eqref{E:1dCD}; here,  with a slight abuse of notation, $X_{q}$ denotes also the map with image $X_{q}$.

In the next section we recall all the needed terminology and objects from the theory of $L^{1}$-optimal transportation
used to obtain Theorem \ref{T:localize}.
This will also serve as basis for establishing the main result of the present paper i.e. a quantitative isoperimetric inequality.


\subsection{$L^{1}$ optimal transportation}\label{Ss:L1OT}

In this section we recall only some facts from the theory of $L^1$ optimal transportation which are of some interest for this paper;
we refer to \cite{ambro:lecturenote, AmbrosioPratelliL1, biacava:streconv, cava:MongeRCD, CMi, EvansGangbo,FeldmanMcCann-Manifold, klartag, Vil:topics}
and references therein for more details on the theory of $L^1$ optimal transportation.

Following  the approach of \cite{klartag}, Theorem \ref{T:localize} has been proven in \cite{CM1} studying the following optimal transportation problem: define
$\mu_{0} : = f^{+} \mm$ and $\mu_{1} : = f^{-}\mm$, where $f^{\pm}$ denote the positive and the negative part of $f$, respectively, and study the
$L^{1}$-optimal transport problem associated with it
\begin{equation}\label{eq:defL1OT}
\inf \left\{ \int_{X\times X} \sfd(x,y) \, \pi(dxdy) \colon \pi \in \mathcal{P}(X\times X), \ (P_{1})_{\sharp}\pi = \mu_{0}, (P_{2})_{\sharp}\pi = \mu_{1} \right\};
\end{equation}
where $P_{i}$ denotes the projection onto the $i$-th component.
Then the relevant object to study is given by the dual formulation of the previous minimization problem.
By the summability properties of $f$ (see the hypothesis of Theorem \ref{T:localize}), there exists a $1$-Lipschitz function $\f : X \to \R$  such that
$\pi$ is a minimizer in \eqref{eq:defL1OT} if and only if $\pi(\Gamma) = 1$, where
$$
\Gamma : = \{ (x,y) \in X \times X  \colon \f(x) - \f(y) = \sfd(x,y)\}
$$
is the naturally associated $\sfd$-cyclically monotone set, i.e. for any $(x_{1},y_{1}), \dots, (x_{n},y_{n}) \in \Gamma$ it holds
$$
\sum_{i = 1}^{n} \sfd(x_{i},y_{i}) \leq \sum_{i = 1}^{n} \sfd(x_{i},y_{i+1}),  \qquad y_{n+1} = y_{1},
$$
for any $n \in \N$.
The set $\Gamma$ induces a partial order relation whose maximal chains produce a partition made of one dimensional sets of a certain subset of the space,
provided the ambient space $X$ verifies some mild regulartiy properties.

We now review how to obtain the partition from $\Gamma$; this procedure has been already presented and used in several contributions
(\cite{AmbrosioPratelliL1, biacava:streconv, FeldmanMcCann-Manifold, klartag, Vil:topics})
when the ambient space is the euclidean space, a manifold or a non-branching metric space (see \cite{biacava:streconv, cava:Wiener} for extended metric spaces);
the analysis in our framework started with \cite{cava:MongeRCD} and has been refined and extended in \cite{CMi};
we will follow the notation of \cite{CMi} to which we refer for more details.

The \emph{transport relation} $R$ and the \emph{transport set} $\mathcal{T}$ are defined  as:
\begin{equation}\label{E:R}
R := \Gamma \cup \Gamma^{-1} = \{ |\f(x) - \f(y)| = \sfd(x,y) \} ~,~ \mathcal{T} := P_{1}(R \setminus \{ x = y \}) ,
\end{equation}
where $\{ x = y\}$ denotes the diagonal $\{ (x,y) \in X^{2} : x=y \}$ and $\Gamma^{-1}= \{ (x,y) \in X \times X : (y,x) \in \Gamma\}$.
Since $\f$ is $1$-Lipschitz, $\Gamma, \Gamma^{-1}$ and $R$ are closed sets and therefore, from the compactness of $(X,\sfd)$ (recall $\CD(K,N)$ with $K>0$), compact;
consequently $\mathcal{T}$ is $\sigma$-compact.

It is immediate to verify (see \cite[Proposition 4.2]{ambro:lecturenote}) that
if $(\gamma_0,\gamma_1) \in \Gamma$ for some  $\gamma \in \Geo(X)$, then $(\gamma_{s},\gamma_{t}) \in \Gamma$  for all $0\leq s \leq t \leq 1$.
To exclude possible branching we need to consider the following sets, introduced in \cite{cava:MongeRCD}:
\begin{align*}
	A_{+}	: = 	&~\{ x \in \mathcal{T} : \exists z,w \in \Gamma(x), (z,w) \notin R \}, \nonumber \\
	A_{-}		: = 	&~\{ x \in \mathcal{T} : \exists z,w \in \Gamma^{-1}(x), (z,w) \notin R \};
\end{align*}
where $\Gamma(x) = \{y \in X \; ;\; (x,y) \in \Gamma\}$ denotes the section of $\Gamma$ through $x$ in the first coordinate, and similarly for $R(x)$ (through either coordinates by symmetry).
$A_{\pm}$ are called the sets of forward and backward branching points, respectively. Note that both $A_{\pm}$ are $\sigma$-compact sets.
Then the non-branched transport set has been defined as
$$
\T^{b} : = \T \setminus (A_{+} \cup A_{-}),
$$
and is a Borel set; accordingly the  non-branched transport relation is given by:
$$
R^b := R \cap (\T^b \times \T^b) .
$$
In was shown in \cite{cava:MongeRCD} (cf. \cite{biacava:streconv}) that $R^b$ is an equivalence relation
over $\T^{b}$ and that for any $x \in \T^{b}$, $R(x) \subset (X,\sfd)$ is isometric to a closed interval in $(\R, | \cdot |)$.

Now, from the first part of the Disintegration Theorem (see Theorem \ref{T:disintegrationgeneral}) applied to $(\T^b , \mathcal{B}(\T^b), \mm\llcorner_{\T^{b}})$, we obtain an essentially unique disintegration of $\mm\llcorner_{\T^{b}}$ consistent with the partition
of $\T^{b}$ given by the equivalence classes $\{ R^b(q)\}_{q \in Q}$ of $R^{b}$:
$$
\mm\llcorner_{\T^{b}} = \int _{Q} \mm_{q}\,\qq(dq ),
$$
with corresponding quotient space $(Q, \mathscr{Q},\qq)$ ($Q \subset \T^b$ may be chosen to be any section of the above partition).
In what follows, we will use also the notation $X_{q}$ to denote the transport ray $R^{b}(q)$.

The next step is to show that the disintegration is strongly consistent. By the  Disintegration Theorem, this is equivalent to the existence of a $\mm\llcorner_{\T^{b}}$-section $\bar Q \in \mathcal{B}(\T^b)$ (which by a mild abuse of notation we will call $\mm$-section),
such that the quotient map associated to the partition is $\mm$-measurable,
where we endow $\bar Q$ with the trace $\sigma$-algebra.
This has already been shown in \cite[Proposition 4.4]{biacava:streconv} in the framework of non-branching metric spaces;
since its proof does not use any non-branching assumption, we can conclude that:
$$
\mm\llcorner_{\T^{b}} = \int _{Q} \mm_{q}\,\qq(dq ), \quad \text{and for } \qq-\text{a.e. } q \in Q, \quad \mm_{q}(R^b(q)) =1,
$$
where now $Q \supset \bar Q \in \mathcal{B}(\T^b)$ with $\bar Q$ an $\mm$-section for the above partition (and hence $\qq$ is concentrated on $\bar Q$).
Moreover the existence of an $\mm$-measurable quotient map permits to conclude that the quotient $\sigma$-algebra on $\bar Q$, that we denote with
$\mathscr{Q} \cap \bar Q$, is contained in $\overline{\mathcal{B}(\bar Q)}^{\qq}$, the completion with respect to $\qq$ of the Borel $\sigma$-algebra over $\bar Q$.

The existence of an $\mm$-section also permits to construct a measurable parametrization of the transport rays.
We can define
$$
g : \dom(g) \subset \bar Q \times \R \to \T^{b}
$$
that associates to $(q,t)$ the unique $x \in \Gamma(q)$ with $\sfd(q,x) = t$, provided $t > 0$,
or the unique $x \in \Gamma^{-1}(q)$ with $\sfd(q,x) = -t $, otherwise.
Then
\begin{align*}
\gr(g) = &~ \{ (q,t,x) \in \bar Q \times [0,\infty) \times \T^{b} \colon (q,x) \in \Gamma, \ \sfd(q,x) = t \}  \\
&~ \cup \{ (q,t,x) \in \bar Q \times (-\infty, 0) \times \T^{b} \colon (q,x) \in \Gamma^{-1}, \ \sfd(q,x) = - t \} ,
\end{align*}
showing that $\gr(g)$ is Borel; in particular $g : \dom(g) \to \T^{b}$ is a Borel map with $\dom(g)$ analytic set and
 image  $\cup_{\alpha \in \bar Q} R^{b}(\alpha)$, that is analytic as well.
To conclude we also notice that $g$ is injective and
$$
\mm(\T^{b} \setminus \bigcup_{q \in \bar Q} R^{b}(q) ) =0.
$$

\medskip
A-priori the non-branched transport set $\T^b$ can be much smaller than $\T$. However, under fairly general assumptions one can prove that the sets
$A_{\pm}$ of forward and backward branching are both $\mm$-negligible.
In \cite{cava:MongeRCD} this was shown for a m.m.s. $(X,\sfd,\mm)$ verifying $\RCD(K,N)$ and $\supp(\mm) = X$.
The proof only relies on the following two properties which hold for the latter spaces:
\begin{itemize}
\item[-] $\supp(\mm) = X$.
\item[-] Given $\mu_{0}, \mu_{1} \in \mathcal{P}_{2}(X)$ with $\mu_{0}\ll \mm$,
there exists a unique optimal transference plan for the $W_{2}$-distance and it is induced by an optimal transport \emph{map}.
\end{itemize}
These properties are also verified for an essentially non-branching m.m.s. $(X,\sfd,\mm)$ satisfying $\CD(K,N)$ and $\supp(\mm)= X$ (see \cite{CM3}).

We summarize the above discussion in:

\begin{corollary} \label{C:disint}
Let $(X,\sfd,\mm)$ be an essentially non-branching m.m.s. satisfying $\CD(K,N)$ and $\supp(X) = \mm$.
Then for any $1$-Lipschitz function $\f : X \to \R$, we have $\mm(\T \setminus \T^b) = 0$. In particular, we obtain the following essentially unique disintegration $(Q,\mathscr{Q},\qq)$ of $\mm\llcorner_{\T} = \mm\llcorner_{\T^b}$ strongly consistent with the partition
of $\T^{b}$ given by the equivalence classes $\{R^b(q)\}_{q \in Q}$ of $R^{b}$:
\begin{equation}\label{E:disintMCP}
\mm\llcorner_{\T} = \int_{Q} \mm_{q} \,\qq(dq), \quad \text{and for } \qq-\text{a.e. } q \in Q, \quad \mm_{q}(R^b(q)) =1 .
\end{equation}
Here $Q$ may be chosen to be a section of the above partition so that $Q \supset \bar Q \in \mathcal{B}(\T^b)$ with $\bar Q$ an $\mm$-section with $\mm$-measurable quotient map. In particular, $\mathscr{Q} \supset \mathcal{B}(\bar Q)$ and $\qq$ is concentrated on $\bar Q$.
\end{corollary}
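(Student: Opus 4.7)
The plan is to reduce the entire statement to the single measure-theoretic claim $\mm(A_{+}) = \mm(A_{-}) = 0$. Once this is in hand, the strongly consistent disintegration \eqref{E:disintMCP}, the existence of the $\mm$-section $\bar Q$, and the inclusion $\mathscr Q \supset \mathcal B(\bar Q)$ follow immediately from the Disintegration Theorem applied to the equivalence relation $R^{b}$ on $\T^{b}$, exactly as in the construction carried out in the paragraphs preceding the corollary (which nowhere used a non-branching hypothesis). By the symmetry swapping $\Gamma$ and $\Gamma^{-1}$ it suffices to treat $\mm(A_{+}) = 0$, so I focus entirely on that.

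To handle $\mm(A_{+}) = 0$ I would argue by contradiction: suppose $\mm(A_{+}) > 0$. The first step is a measurable selection on the closed (hence analytic) set $\Gamma$, combined with a countable exhaustion of $A_{+}$ by quantitative thresholds that provide uniform lower bounds on the lengths $\sfd(x,z), \sfd(x,w)$ and on the $\Gamma$-opening between the two branching rays (say $\sfd(z,w) - |\sfd(x,z) - \sfd(x,w)| \geq 1/k$). Applying Jankov--von Neumann uniformization one obtains a compact set $A \subset A_{+}$ with $\mm(A) > 0$ and Borel selections $z, w : A \to X$ such that $(x,z(x)), (x,w(x)) \in \Gamma$ and $(z(x), w(x)) \notin R$ for every $x \in A$, with the above quantitative bounds uniform on $A$. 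Using $\Gamma$-cyclical monotonicity, the two maps $z$ and $w$ (after convolving their images with a suitable $\mm$-absolutely continuous kernel to ensure the targets are absolutely continuous) produce two distinct $W_{2}$-optimal couplings between $\mu_{0} := \mm(A)^{-1} \mm\llcorner_{A}$ and an appropriate probability target; equivalently, one exhibits a $W_{2}$-optimal plan with absolutely continuous marginals that is not induced by a map. This contradicts the second structural property listed above: for essentially non-branching $\CD(K,N)$ spaces with $\supp(\mm) = X$, \cite{CM3} guarantees that the $W_{2}$-optimal plan with absolutely continuous source is unique and induced by a transport map.

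The main obstacle will be the measurable-selection step and the verification that the two resulting competitors are both genuinely $W_{2}$-optimal and distinct. The quantitative branching thresholds are what allow the uniform control needed to invoke uniformization and to preserve optimality after the regularization that renders the targets absolutely continuous. This technical core is precisely the argument carried out in \cite{cava:MongeRCD} in the $\RCD(K,N)$ setting, and the key observation I would rely on is the one already flagged in the excerpt: that entire proof uses only (i) $\supp(\mm) = X$ and (ii) existence and uniqueness of $W_{2}$-optimal maps from absolutely continuous sources, both of which hold in our essentially non-branching $\CD(K,N)$ framework by \cite{CM3}. Hence the same argument transfers verbatim, yielding $\mm(\T \setminus \T^{b}) = 0$ and with it the stated disintegration.
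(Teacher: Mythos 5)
Your proposal follows the same route as the paper. The corollary reduces to $\mm(A_\pm)=0$, which was established in \cite{cava:MongeRCD} for $\RCD(K,N)$ spaces using only $\supp(\mm)=X$ together with existence and uniqueness of $W_2$-optimal maps from absolutely continuous sources; both properties transfer to essentially non-branching $\CD(K,N)$ spaces via \cite{CM3}, and the paper's proof consists precisely of this citation-and-transfer observation, which you correctly identify. Two small inaccuracies in your sketch of the cited argument, neither of which affects the conclusion since you explicitly defer the technical details to \cite{cava:MongeRCD}: (i) convolving with an $\mm$-absolutely continuous kernel has no meaning in a general metric measure space, and is in any case unnecessary, because the uniqueness result from \cite{CM3} requires only the \emph{source} marginal to be absolutely continuous, so the splitting plan $\tfrac12(\mathrm{id},T^z)_\sharp\mu_0+\tfrac12(\mathrm{id},T^w)_\sharp\mu_0$ (not induced by a map) already provides the contradiction with no regularity required of the target; (ii) $\Gamma$ is $\sfd$-cyclically monotone, not $\sfd^2$-cyclically monotone, so $W_2$-optimality of the displaced plan is not an immediate consequence of $\Gamma$-cyclical monotonicity and is in fact the genuine technical core of \cite{cava:MongeRCD}.
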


To obtain Theorem \ref{T:localize} one still needs to show that the constraint $\int_{X} f \, \mm= 0$ is localized,  i.e. $\int_{X_{q}} f \,\mm_{q} = 0$ for $\qq$-a.e. $q\in Q$,
together with the curvature bound: $q \mapsto \mm_{q}$ is a $\CD(K,N)$ disintegration. The first property it verified almost ``by contruction'';
the second one is the more subtle and to prove it one should study the interplay between $L^{2}$-Wasserstein geodesics and the transport set $\T$;
we refer to \cite[Theorem 4.2]{CM1} for all the details.

\medskip

Finally, we recall that $\qq$-a.e. $R^{b}(q)$ is actually maximal, meaning that it coincides with $R(q)$.
This can be restated as follows. $\Gamma$ induces a partial order relation on $X$:
$$
y \leq x \;\;\; \Leftrightarrow \;\;\; (x,y) \in \Gamma,
$$
and for $x \in \T^{b}$,  $(R(x),\sfd)$ is isometric to a closed interval in $(\R,|\cdot |)$.
This isometry induces a total ordering on $R(x)$ which must coincide with either $\leq$ or $\geq$, implying that $(R(x), \leq)$ is totally ordered;
in particular it is a chain; the previous maximality property means that $R^b(q) = R(q) \cap \T^b$ is a maximal chain in the partially ordered set  $(X,\leq)$.

To rigorously state this property, we use the classical definition of initial and final points, $\mathcal{A}$ and $\mathcal{B}$, respectively:
\begin{align*}
\mathcal{A} : = &~\{ x \in \T \colon  \nexists y \in \T, \ (y,x) \in \Gamma, \ y \neq x  \}, \\
\mathcal{B} : = &~\{ x \in \T \colon  \nexists y \in \T, \ (x,y) \in \Gamma, \ y \neq x  \}.
\end{align*}
Note that:
$$
\mathcal{A} = \T \setminus P_{1}\big(\{ \Gamma \setminus \{ x= y \} \}) ,
$$
so $\mathcal{A}$ is Borel (since $(X,\sfd)$ is compact the set $\Gamma \setminus \{ x= y \} $ is $\sigma$-compact); similarly for $\mathcal{B}$.

\begin{theorem}[\cite{CMi}]\label{T:endpoints}
Let $(X,\sfd, \mm)$ be an essentially non-branching m.m.s. verifying $\CD(K,N)$ and $\supp(\mm) = X$. Let $\f : (X,\sfd) \rightarrow \R$ be
any $1$-Lipschitz function, with \eqref{E:disintMCP} the associated disintegration of $\mm \llcorner_{\T}$.  \\
Then there exists
 $\hat Q \subset Q$ such that $\qq(Q \setminus \hat Q) = 0$ and for any $q \in \hat Q$ it holds:
$$
R(q) \setminus \T^{b} \subset \mathcal{A} \cup \mathcal{B}.
$$
In particular, for every $q \in \hat Q$:
\[
R(q) = \overline{R^b(q)} \supset R^b(q) \supset \mathring{R}(q) ,
\]
(with the latter interpreted as the relative interior).
\end{theorem}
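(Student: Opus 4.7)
The proof hinges on the transitivity of $\Gamma$ together with $\mm(A_+\cup A_-)=0$ from Corollary \ref{C:disint}. I would first verify that $\Gamma$ is transitive: if $(x,y),(y,z)\in\Gamma$, then
\[
\f(x)-\f(z)=\sfd(x,y)+\sfd(y,z)\ge \sfd(x,z)\ge \f(x)-\f(z),
\]
so equality holds and $(x,z)\in\Gamma$. As a consequence, for every $q\in\T^b$ the full ray $R(q)$ is totally ordered by $y\le_\Gamma x\Leftrightarrow (x,y)\in\Gamma$: an incomparable pair $z,w\in R(q)$ would produce a witness of $q\in A_+$ (if both are below $q$) or $q\in A_-$ (if both are above), while the mixed case is ruled out by transitivity itself. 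Thus $\f$ embeds $R(q)$ isometrically onto a compact subset of $\R$ with well-defined extreme points $x_q^\pm$.

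Transitivity also shows that $A_+\cap R(q)$ is upward-closed in the $\Gamma$-order on $R(q)$: if $x\in A_+$ is witnessed by $z,w\in\Gamma(x)$ with $(z,w)\notin R$ and $y\ge_\Gamma x$ lies on $R(q)$, then $z,w\in\Gamma(y)$ still witness $y\in A_+$. Symmetrically, $A_-\cap R(q)$ is downward-closed. Two consequences follow. First, an interior point of $R(q)$ lying in $A_+\cup A_-$ has $x_q^\pm\in\T$ strictly above and below it, so cannot belong to $\mathcal A\cup\mathcal B$; such a point violates the theorem. Second, an extremal point $x_q^+$ that belongs to $\T^b$ is automatically in $\mathcal A$: any $y\in\T$ with $y>_\Gamma x_q^+$ satisfies $q,y\in R(x_q^+)$, and total ordering of $R(x_q^+)$ (applying the first step at $x_q^+$) forces $y\in R(q)$, contradicting maximality of $x_q^+$; symmetrically $x_q^-\in\T^b\Rightarrow x_q^-\in \mathcal B$. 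Hence Theorem \ref{T:endpoints} reduces to showing that for $\qq$-a.e.\ $q\in\bar Q$, the set $(A_+\cup A_-)\cap R(q)$ contains no interior point of $R(q)$.

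Let $Q^{\mathrm{bad}}\subset\bar Q$ consist of those $q$'s whose ray harbors such an interior branching point; $\qq$-measurability is routine via the Borel structure of $g$, $\Gamma$ and $A_\pm$. By the closure properties just established, each $q\in Q^{\mathrm{bad}}$ carries a non-degenerate sub-interval $J_q\subset \mathring R(q)$ with $g(q,J_q)\subset A_+\cup A_-$. The main obstacle is that the disintegration density $h_q$ vanishes on $J_q$ by construction (since $\mm_q$ is concentrated on $R^b(q)$), so the Fubini identity for $\mm=\int \mm_q\,d\qq$ combined with $\mm(A_+\cup A_-)=0$ is vacuous on $Q^{\mathrm{bad}}\times J_q$ and cannot be invoked as is. The contradiction must come from combining essential non-branching with the $\CD(K,N)$ regularity of $h_q$: via a measurable selection, pick $x_q\in g(q,J_q)\cap A_+$ together with witnesses $z_q,w_q\in\Gamma(x_q)$, $(z_q,w_q)\notin R$, and produce two distinct $W_2$-optimal plans between suitably chosen absolutely continuous marginals localized above and below $x_q$ --- one following $R(q)$ through $x_q$, the other diverted through the branch $z_q$ or $w_q$. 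The existence of two such plans contradicts the uniqueness of the $W_2$-optimal map in essentially non-branching $\CD(K,N)$ spaces (Definition \ref{def:CDKN-ENB}), the very property that underlies $\mm(A_+\cup A_-)=0$. The technical heart of the proof is the careful organization of this measurable selection and the verification that the two resulting plans genuinely differ.
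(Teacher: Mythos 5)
The paper does not prove Theorem \ref{T:endpoints} but cites it from~\cite{CMi}, so there is no in-text proof to compare against. Evaluating your argument on its own terms: the structural part is largely sound. Transitivity of $\Gamma$, the total ordering of $R(q)$ for $q\in\T^b$, and the observation that $A_+\cap R(q)$ (resp.\ $A_-\cap R(q)$) is an up-set (resp.\ down-set) in the $\Gamma$-order on $R(q)$ are all correct and do reduce the statement to showing that $(A_+\cup A_-)\cap R(q)$ avoids $\mathring R(q)$ for $\qq$-a.e.~$q$. There is, however, a small but real gap in the endpoint discussion: you prove that an extremal point $x_q^+$ \emph{belonging to $\T^b$} lies in $\mathcal A$, but the theorem's claim concerns exactly the set $R(q)\setminus\T^b$, so the case that must be handled is an extremal point \emph{not} in $\T^b$. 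That case can still be saved (replace $x_q^+$ by any point of $R^b(q)$ -- e.g.\ $q$ itself -- in your maximality argument, which is available precisely because the reduction has killed interior branching), but as written the argument addresses the wrong case. There is also a notational misstep: you write $g(q,J_q)\subset A_+\cup A_-$, but by construction $g$ takes values in $\T^b$, which is disjoint from $A_\pm$; what you mean is the parametrization of $R(q)$, not of $R^b(q)$.

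The decisive problem is that the core step -- showing $\qq(Q^{\mathrm{bad}})=0$ -- is not proved. You correctly diagnose why the naive argument fails (the disintegration identity is vacuous on the branching zone because $\mm_q$ is, by construction, concentrated on $R^b(q)$), and you propose to obtain a contradiction by building two distinct $W_2$-optimal plans through the interior branching points, appealing to essential non-branching. But this is exactly where the difficulty lies and it is left entirely unexecuted: the witnesses $z_q,w_q$ and the branching locus live in the $\mm$-null set $A_+\cup A_-$, so producing \emph{absolutely continuous} marginals whose $W_2$-geodesics are forced through those branches, verifying that the resulting plans really are $W_2$-optimal (the rays come from an $L^1$, not $L^2$, potential), and carrying out the measurable selection uniformly in $q\in Q^{\mathrm{bad}}$ are all nontrivial and you explicitly defer them as ``the technical heart of the proof.'' There is also a potential circularity flag: you invoke the $\CD(K,N)$ regularity of $h_q$, but in the logical development that regularity is established in tandem with, not prior to, the maximality of a.e.\ ray, so one cannot freely assume it. As it stands the argument is a plausible outline, not a proof, and the central lemma would need a genuine new idea to close.
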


Possibly taking a  full $\qq$-measure  subset of $\hat Q$, we can assume $\hat Q$ to be Borel. During the paper we will make use of the map associating to each point
$q \in \hat Q$ the starting point of the ray and the end point of the ray. As we are assuming $\CD(K,N)$ with $K>0$, we will think of the starting point as
a ``south pole'' and the ending point as a ``north pole''; this justifies the following notation
$$
P_{S} : \hat Q \to \T, \qquad P_{N} : \hat Q \to \T
$$
with graphs
$$
\gr(P_{S}) : = (\hat Q \times \mathcal{A}) \cap \Gamma^{-1}, \qquad \gr(P_{N}) : = (\hat Q \times \mathcal{B}) \cap \Gamma,
$$
also showing that both $P_{S}$ and $P_{N}$ are Borel maps;
this implies that also the map  $\hat Q \ni q \mapsto |X_{q}|  = \sfd(P_{S}(q),P_{N}(q))$,
is Borel.

From the measurability of the disintegration, one also obtains that
$$
\hat Q \times \R \ni (q,t) \mapsto h_{q} (t) \in [0,\infty),
$$
is $\qq\otimes\L^{1}$-measurable (see for instance \cite[Proposition 10.4]{CMi}); here with an abuse of notation we have denoted with $h_{q}$
the density function $h_{q} \circ g(q,\cdot)$ where $\mm_{q} = h_{q} \mathcal{H}^{1}\llcorner_{X_{q}}$.
It is fairly standard (using for example \cite[Theorem 3.1.30]{Srivastava}) to restrict ourselves to a Borel subset of $\hat Q$ of the same $\qq$-measure,
that for ease of notation we denote again with $\hat Q$, such that
$$
\hat Q \times \R \ni (q,t) \mapsto h_{q} (t) \in [0,\infty),
$$
is Borel. Then we can compose it with a translation, Borel in $q$, to obtain that for each $q \in \hat Q$, $h_{q} : [0,|X_{q}|] \to [0,\infty)$,
and still obtain a jointly Borel function.

One can also restrict the Borel map $g$ to the following Borel subset of its domain:
$$
\{ (q,s) \in \hat Q \times \R \colon s \in (0,|X_{q}|)\},
$$
and by construction, the restriction of $g$ is injective.

We conclude this part mentioning that we will directly write $Q$ instead of $\hat Q$
and we summarize the measurability properties obtained:

\begin{itemize}
\item[-] The disintegration formula holds: for  a suitably chosen  $Q \subset \T^{b}$ Borel,  it holds
$$
\mm\llcorner_{\T}  = \int_{Q} \mm_{q} \, \qq(dq), \quad \text{and} \quad  \mm_{q}(R(q)) = 1,\;  \qq\text{-a.e. } q \in Q;
$$
\item[-] For $\qq$-a.e. $q \in Q$, $\mm_{q} = (g(q,\cdot))_{\sharp} \, h_{q} \L^{1}\llcorner_{[0,|X_{q}|]}$ with $h_{q}$ a $\CD(K,N)$ density
and the maps $g : \dom(g) \to X$, $h : \dom (h) \to [0,\infty)$ with $\dom (g), \dom (h) \subset Q \times \R$ are Borel measurable.
\end{itemize}


\section{Quantitative one-dimensional estimates}\label{S:onedstab}

In this section we obtain all the one-dimensional results concerning the quantitative isoperimetric inequality that will then be used
in the general framework of metric measure spaces.

We start considering the one-dimensional metric measure space $([0,D], |\cdot|, h\cdot \L^{1})$ verifying $\CD(N-1,N)$, i.e.
$$
\big( h^{1/(N-1)} \big)'' + h^{1/(N-1)} \leq 0,
$$
in the sense of distributions and such that $\int_{[0,D]} h(t)dt = 1$; notice that,  as by construction $h \geq 0$, then  necessarily $h > 0$ over $(0,D)$.
 Since   $\CD(N-1,N)$ densities are log-concave (meaning that $-\log h$ is convex), by a result of Bobkov \cite[Proposition 2.1]{Bobkov} for each $v \in (0,1)$ there exists an  isoperimetric minimizer  either of the form $[0,r^{-}_{h}(v)]$ or $[r^{+}_{h}(v), D]$,
where
$$
v =  \int_{0}^{r^{-}_{h}(v)} h(t) \,dt =  \int_{r^{+}_{h}(v)}^{D} h(t) \,dt.
$$
The perimeter functional associated to $h$ will be denoted by $\PP_{h}$.
Then it is natural to define the deficit associated to $h$ as follows
\begin{equation}\label{eq:defdelta2}
\delta_{h}(E) : =  \PP_{h}(E) - \mathcal{I}_{h}(v), \qquad \mathcal{I}_{h}(v) : = \min\{ h(r^{-}_{h}(v)), h(r^{+}_{h}(v)) \} .
\end{equation}

Then, calling $\mm : = h \cdot \L^{1}$, we obtain the next quantitative statement.
\begin{proposition}\label{P:1dquant}
For each $v \in (0,1)$ there exists  $\ve(N,v), C(N,v)>0$  such that for each $\ve : = \pi - D \in (0, \ve(N,v))$
$$
\PP_{h}(E) - \I_{h}(v) \geq C(N,v) \min\{ \mm( E \Delta [0,r_{h}^{-}(v)]), \mm( E \Delta [r_{h}^{+}(v),D])    \},
$$
for any $\mm = h \L^{1}$ with $h$ a $\CD(N-1,N)$ density, supported over $[0,D]$ that integrates to 1 and $E \in \mathcal{B}(\R)$ with $\mm(E) = v$.
\end{proposition}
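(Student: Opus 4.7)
The plan is to establish the linear stability estimate by combining a rigidity analysis at the limit $D=\pi$ with a direct quantitative analysis of single-interval competitors, and finally by reducing the general case to the single-interval case via a consolidation argument.

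The rigidity input is that on $[0,\pi]$ the only probability density satisfying $\CD(N-1,N)$, i.e.\ $(h^{1/(N-1)})''+h^{1/(N-1)}\leq 0$, is the spherical profile $\bar h(t):=\sin^{N-1}(t)/\omega_N$: this follows by testing the distributional inequality against the positive eigenfunction $\phi(t)=\sin t$ of $\phi''+\phi=0$ vanishing at both endpoints, and using $h\geq 0$. A standard compactness argument (log-concave probability densities are precompact in $C^{0}_{\mathrm{loc}}((0,\pi))$) gives that for every $v\in(0,1)$ and $\eta>0$ there exists $\varepsilon_{0}(N,v,\eta)>0$ such that, for $\varepsilon:=\pi-D<\varepsilon_{0}$, every $\CD(N-1,N)$ probability density $h$ on $[0,D]$ is $\eta$-close to $\bar h$ on every compact of $(0,\pi)$; in particular $|r_{h}^{\pm}(v)-\bar r^{\pm}(v)|<\eta$, $h$ is bounded above by $C_{0}(N,v)$ globally, bounded below by $c_{0}(N,v)>0$ on the middle region $M:=[\bar r^{-}(v)/2,\,(\bar r^{+}(v)+\pi)/2]$, and $(\log h)'$ is strictly decreasing with derivative $\leq -c_{0}'(N,v)<0$ on $M$.

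Next I handle single-interval competitors $E=[a,\phi(a)]$ parameterized by $a\in[0,r_{h}^{+}(v)]$, where $\phi(a)$ is characterized by $\int_{a}^{\phi(a)}h\,dt=v$. Setting $p(a):=h(a)+h(\phi(a))$ on $(0,r_{h}^{+}(v))$ and extending by $p(0):=h(r_{h}^{-}(v))$, $p(r_{h}^{+}(v)):=h(r_{h}^{+}(v))$ (which are the actual perimeters, not counting endpoints of $[0,D]$), one differentiates $\int_{a}^{\phi(a)}h=v$ to get $\phi'(a)=h(a)/h(\phi(a))$, hence
\[
p'(a)=h(a)\bigl[(\log h)'(a)+(\log h)'(\phi(a))\bigr].
\]
Strict log-concavity together with the negative upper bound on $(\log h)''$ from the first step makes the bracket strictly decreasing in $a$. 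Integrating from either endpoint and using that $(\log \bar h)'=(N-1)\cot$ blows up at $0$ and $\pi$, one gets $p(a)-p(0)\geq c_{2}(N,v)F(a)$ on a left portion of $[0,r_{h}^{-}(v)]$ and $p(a)-p(r_{h}^{+}(v))\geq c_{2}(N,v)(1-v-F(a))$ on a right portion, where $F(t):=\int_{0}^{t}h$; a covering argument in the middle regime makes $c_{2}(N,v)$ uniform on the whole interval. Since $\mm(E\Delta[0,r_{h}^{-}(v)])=2F(a)$ and $\mm(E\Delta[r_{h}^{+}(v),D])=2(1-v-F(a))$, and $\I_{h}(v)=\min\{p(0),p(r_{h}^{+}(v))\}$, one concludes the single-interval version of the estimate with constant $c_{2}/2$.

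For general $E=\bigcup_{i=1}^{k}[a_{i},b_{i}]$ with $k\geq 2$, write $\delta:=\PP_{h}(E)-\I_{h}(v)$. Any interior boundary point of $E$ lying in $M$ contributes at least $c_{0}$ to $\PP_{h}(E)$, so at most $(\I_{h}(v)+\delta)/c_{0}$ such points exist: below a threshold depending on $(N,v)$, this count is $O(1)$ and in fact essentially zero. On the tails the $\CD(N-1,N)$ condition plus concavity of $h^{1/(N-1)}$ yields the explicit decay $h(t)\leq C'\min\{t,D-t\}^{N-1}$; then any component entirely inside a tail with a boundary point at distance $r$ from the closer endpoint has mass $\leq C'\,r\,h(r)/N$, i.e.\ its mass is absorbed by a constant times its perimeter contribution. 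A consolidation move — removing tail components and enlarging one of the ``almost-minimizing'' central intervals to restore mass $v$ — produces a single-interval $\tilde E$ with $\PP_{h}(\tilde E)\leq\PP_{h}(E)$ and $\mm(E\Delta\tilde E)\leq C(N,v)\delta$, to which the single-interval stability applies. The triangle inequality closes the argument.

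The main obstacle is precisely the tail analysis: quantitatively converting a small perimeter contribution at a tail boundary point into a small $L^{1}$ contribution, uniformly in $\varepsilon$ small. This relies on the sharp endpoint decay $h\sim t^{N-1}$ (respectively $(D-t)^{N-1}$) forced by $\CD(N-1,N)$, and on the fact that this decay survives the perturbation $\varepsilon>0$ uniformly, both of which are ensured by the compactness/rigidity of the first step.
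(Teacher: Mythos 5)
Your proposal takes a genuinely different route from the paper. The paper proves the proposition by compactness and contradiction: it assumes a sequence of counterexamples $(h_j,E_j)$ with vanishing ratio, uses Proposition~\ref{P:estimatedensity} to force $h_j\to h_N$ and $\chi_{E_j}\to\chi_{[0,r_N^-(v)]}$, normalizes the tail pieces of $E_j$ into intervals touching the endpoints, and then observes that the interior cut at $\beta_j$ of the remaining near-minimizer contributes $h_j(\beta_j)$ to the deficit while only $\lesssim \beta_j h_j(\beta_j)$ to the symmetric difference, giving a ratio $\gtrsim 1/\beta_j\to\infty$. You instead aim for a direct, constructive argument: rigidity of $\bar h$, a monotonicity formula for single-interval competitors via $p'(a)=h(a)[(\log h)'(a)+(\log h)'(\phi(a))]$, and a consolidation step for general $E$. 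Your rigidity observation (test $(h^{1/(N-1)})''+h^{1/(N-1)}\leq 0$ against $\sin t$, forcing $g(0)=g(\pi)=0$ and then $g=c\sin$) is correct and rather clean, and the single-interval analysis, viewed as concavity of $p$ as a function of $F(a)=\int_0^a h$, is a promising route to explicit constants, something the paper's compactness argument does not give.

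However, there are genuine gaps in the general-to-single-interval reduction, which is exactly where the paper's compactness argument does the real work. First, the tail decay is misstated: a $\CD(N-1,N)$ density on $[0,D]$ need not vanish at the endpoints (take $h\propto\sin^{N-1}(t+\xi)$ with $\xi>0$); Proposition~\ref{P:estimatedensity} only gives $h(t)\lesssim\max\{h_N(t),h_N(t+\ve)\}$, i.e.\ $h(t)\lesssim(t+\ve)^{N-1}$ rather than $t^{N-1}$. This is not fatal but must be tracked, because your bound "mass $\leq C'\,r\,h(r)/N$" for a tail component relies on it. Second, and more seriously, the consolidation move — "removing tail components and enlarging one of the almost-minimizing central intervals to restore mass $v$" — is asserted to produce $\tilde E$ with $\PP_h(\tilde E)\leq\PP_h(E)$ \emph{and} $\mm(E\Delta\tilde E)\leq C(N,v)\,\delta$, but neither claim is established. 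Extending a central interval to restore mass can increase perimeter when $h$ is decreasing on the side being extended, and bounding $\mm(E\Delta\tilde E)$ by the \emph{deficit} $\delta$ (rather than by $\PP_h(E)$) requires comparing the removed/added perimeter to $\I_h(v)$, which is exactly the subtle cancellation that the paper handles via the Lipschitz estimate $|\I_{h}(v_j)-\I_h(v)|\leq C\beta_j h(\beta_j)$. As written, this step is a sketch, not a proof. Finally, even in the single-interval case, the "covering argument in the middle regime" and the lower bound on $p(a^*)-p(0)$ uniformly in $D$ near $\pi$ need to be spelled out; the observation $(\log h)''\leq-(N-1)$ wherever $h>0$ (from $g''\leq-g$) does make this plausible, but you should make the dependence of the resulting constant on $(N,v)$ explicit. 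In short: a defensible alternative strategy with a correct and elegant rigidity input, but the reduction of arbitrary $E$ to a single interval is not carried out, and that is the heart of the problem.
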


\begin{proof}
{\bf Step 1.}
Suppose by contradiction the claim was false so that we can find sequences  $\ve_{j} \to 0$, $h_{j}$ of densities verifying $\CD(N-1,N)$
over $[0,\pi - \ve_{j}]$ and $E_{j} \subset [0,D_{j}]$ with $D_{j}: =\pi -\ve_{j}$ such that
$$
\lim_{j\to \infty} \frac{\PP_{h_{j}}(E_{j}) - \I_{h_{j}}(v)}{\min\{ h_{j}\L^{1}( E_{j} \Delta [0,r_{h_{j}}^{-}(v)]), h_{j}\L^{1}( E_{j} \Delta [r_{h_{j}}^{+}(v),\pi])   ) \}} = 0.
$$
Possibly passing to a subsequence, we assume that
$$
\lim_{j\to \infty} \frac{\PP_{h_{j}}(E_{j}) - \I_{h_{j}}(v)}{ h_{j}\L^{1}( E_{j} \Delta [0,r_{h_{j}}^{-}(v)])  } = 0;
$$
it will be clear from the proof that the other case follows similarly.

As $P_{h_{j}}(E_{j})$ is uniformly bounded, we can find a representative of $E_{j}$ (i.e. having the same $\PP_{h}$), that we denote with the same symbol, such that
$$
E_{j} = \bigcup_{i\in \N} (a^{i}_{j},b^{i}_{j}).
$$
By Proposition \ref{P:estimatedensity}, we deduce that $h_{j} \to h_{N}$ uniformly over any compact subset of $(0,\pi)$;   moreover, since $h_{j} \geq 0,  h_{j}^{1/N}$ is concave and  $\int_{0}^{D_{j}} h_{j}=1$ then $\sup_{j} \sup_{t \in [0,D_{j}]} h_{j} < \infty$.  In particular
$\I_{h_{j}}(v) \to \I_{\pi}(v)$
yielding that also $\PP_{h_{j}}(E_{j})$ converges to $\I_{\pi}(v)$, hence we infer by compactness, that
$h_{j} \chi_{E_{j}} \to h_{N} \chi_{[0,r_{N}^{-}(v)]}$ pointwise over $[0,1]$ and in $L^{1}_{loc}(0,\pi)$.
Possibly passing to a subsequence, it follows that $\chi_{E_{j}} \to \chi_{[0,r_{N}^{-}(v)]}$ pointwise (recall that also $r_{h_{j}}^{-}(v) \to r_{N}^{-}(v)$ as $j\to \infty$).
For ease of notation $r_{h_{j}}^{-}(v) =r_{h_{j}}(v)$ and the same for $r_{N}^{-}(v)$.

From Proposition \ref{P:estimatedensity} we deduce that $E_{j}$ can be decomposed as follows
$$
E_{j} = E^{0}_{j} \cup (\beta_{j}, r_{h_{j}}(v)+\gamma_{j})\cup E_{j}^{D_{j}},
$$
where $E^{0}_{j} \subset [0,\eta_{j}], E_{j}^{D_{j}} \subset [D_{j} - \eta_{j}, D_{j}]$ with $\eta_{j} \to 0$ as $j \to \infty$, and $\beta_{j}, \gamma_{j}>0$ (no restrictive) such that
$$
\eta_{j}< \beta_{j} \to 0, \qquad r_{h_{j}}(v)+\gamma_{j} < D_{j} -\eta_{j}, \gamma_{j} \to 0.
$$
\medskip

{\bf Step 2.}  Using again Proposition \ref{P:estimatedensity} ,  the unique maximum $x_{j} \in  [0,D_{j}]$  of $h_{j}$ given by Lemma \ref{L:monotonicity} is necessarily converging to $\pi/2$,
hence if we replace $E_{j}^{0}$ with $[0,\alpha_{j}]$ and $E_{j}^{D_{j}}$ with $[\xi_{j}, D_{j}]$ such that
$$
h_{j}\L^{1} (E_{j}^{0}) = h_{j}\L^{1} ([0,\alpha_{j}]), \qquad h_{j}\L^{1} (E_{j}^{D_{j}}) = h_{j}\L^{1} ([\xi_{j},D_{j}]),
$$
and we call again the new sequence $E_{j}$, then the perimeter will be decreased and the symmetric difference with $[0,r_{h_{j}}^{-}(v)]$  remaining the same.

Hence pick as (possibly) new sequence of sets $E_{j} : = [0,\alpha_{j}] \cup (\beta_{j}, r_{h_{j}}(v)+\gamma_{j})\cup [\xi_{j},D_{j}]$ and from the volume constraint
$$
\int_{(0,\alpha_{j})} h_{j} \,dt+ \int_{(\beta_{j}, r_{h_{j}}(v)+\gamma_{j}))} h_{j} \,dt + \int_{(\xi_{j},D_{j})} h_{j} \,dt = \int_{(0,r_{h_{j}}(v))} h_{j} \,dt
$$
giving
$$
\int_{(r_{h_{j}}(v), r_{h_{j}}(v)+\gamma_{j}))} h_{j} \,dt + \int_{(\xi_{j},D_{j})} h_{j} \,dt = \int_{(\alpha_{j},\beta_{j})} h_{j} \,dt.
$$
This permits to obtain the next identity:
\begin{align}
h_{j}\L^{1}( E_{j} \Delta [0,r_{h_{j}}^{-}(v)])  \nonumber
&~= \int_{(\alpha_{j},\beta_{j})} h_{j}(t)\,dt + \int_{(r_{h_{j}}(v),r_{h_{j}}(v)+\gamma_{j})} h_{j}(t)\,dt  + \int_{(\xi_{j},D_{j})} h_{j}(t)\,dt   \nonumber \\
&~= 2\int_{(\alpha_{j},\beta_{j})} h_{j}(t)\,dt; \label{eq:gamma}
\end{align}
note that by monotonicity of $h_{j}$ (recall that $\beta_{j} \to 0$)
$$
\int_{(\alpha_{j},\beta_{j})} h_{j}(t)\,dt \leq \beta_{j} h_{j}(\beta_{j}).$$
\medskip

{\bf Step 3.}
Define $F_{j} : = [\alpha_{j}, r_{h_{j}}(v) + \gamma_{j}] \cup [\xi_{j},D_{j}]$  and notice that
$$
v_{j}: = \int_{F_{j}} h_{j}(t)\,dt = v + \int_{(\alpha_{j},\beta_{j})} h_{j}(t)\,dt  -  \int_{(0,\alpha_{j})} h_{j}(t)\,dt,
$$
giving by monotonicity $|v_{j} - v| \leq \beta_{j} h_{j}(\beta_{j})$.
Hence expanding $\PP_{h_{j}}(E_{j})$  as
$$\PP_{h_{j}}(E_{j})=\PP_{h_{j}}(F_{j})+ h_{j}(\beta_{j})\geq \cI_{h_{j}}(v_{j})+h_{j}(\beta_{j})$$   one obtains
$$
\PP_{h_{j}}(E_{j}) - \I_{h_{j}}(v)\geq h_{j}(\beta_{j})  + \I_{h_{j}}(v_{j}) -  \I_{h_{j}}(v).
$$
To conclude we observe that the map $(0,1) \ni v \to r_{h_{j}}^{\pm}(v)$ is differentiable with derivative equals to $1/h_{j}(r_{h_{j}}^{\pm}(v))$;
this together with the Lipschitz regularity of $h_{j}$ (with Lipschitz constant uniform on $j$ and depending just on $N$ and  $v$, see Corollary \ref{C:A2}),
implies that
\begin{equation}\label{eq:delta}
|\I_{h_{j}}(v_{j}) -  \I_{h_{j}}(v)| \leq C_{N,v} | v_{j} - v| \leq C_{N,v} \beta_{j} h_{j}(\beta_{j}).
\end{equation}
Then we obtain a contradiction noticing that  the combination of \eqref{eq:gamma} and \eqref{eq:delta} gives
$$
\frac{\PP_{h_{j}}(E_{j}) - \I_{h_{j}}(v) }{h_{j}\L^{1}(E_{j} \Delta [0,r_{h_{j}}(v)]))} \geq
\frac{h_{j}(\beta_{j}) (1 -\beta_{j} C_{N,v} )  }{2 \int_{(\alpha_{j},\beta_{j})} h_{j}(t)\,dt };
$$
in particular, for  $j$ large, the right hand side (recall that $\beta_{j} \to 0$ and $\int_{(\alpha_{j},\beta_{j})} h_{j}(t)\,dt \leq \beta_{j}h_{j}(\beta_{j})$) is arbitrarily large.
The claim follows.
\end{proof}


\subsection{Isoperimetric profile}

We now study the behaviour of the model isoperimetric profile functions in terms of the diameter upper bounds.
For any $D \in [0,\pi]$  there exists $\xi \in [0,\pi - D]$ such that the one-dimensional $\CD(N-1,N)$  model space  with diameter $D$ is given 
by $([\xi, \xi + D], |\cdot|, \sin(t)^{N-1}/(\omega_{N}\lambda_{D}))$ where
\begin{equation}\label{E:volume-profile}
\lambda_{D}: = \frac{1}{\omega_{N}}\int_{\xi}^{\xi +D} \sin(t)^{N-1} \, dt;
\end{equation}
$\omega_{N}$ being the renormalization constant (i.e. the volume of the $N$-dimensional sphere of  unit   radius,   in case $2\leq N \in \N$);
in particular, $\lambda_{\pi} = 1$. For ease of notation we set $h_{D} := \sin(t)^{N-1}/(\omega_{N}\lambda_{D})$ and we omit the dependence on $D$ of $\xi$; moreover for coherence 
$h_{\pi}$ will be denoted with $h_{N}$.

For each volume $v\in (0,1)$,  consider  the two intervals $[\xi, r_{D}^{-}(v)]$, $[r_{D}^{+}(v), D + \xi] \subset [\xi, \xi+D]$ with
$$
\int_{\xi}^{r_{D}^{-}(v)} h_{D}(t) \, dt= \int_{r_{D}^{+}(v)}^{D + \xi} h_{D}(t) \, dt=v.
$$
Then  \cite[Proposition 2.1]{Bobkov} implies that  $\min\{h_{D}(r_{D}^{-}(v)),h_{D}(r_{D}^{+}(v))\}=\cI_{D}(v):=\cI_{N-1,N,D}(v)$, the model isoperimetric profile.

\begin{lemma}\label{L:I_{D}}
The following holds:
\begin{equation}\label{E:I_{D}}
\I_{D}(v) = \frac{1}{\lambda_{D}} \min \left\{ \I_{\pi} \left(\lambda_{D}v + \int_{0}^{\xi} h_{N} dt \right), \I_{\pi}\left(\lambda_{D}(1-v) + \int_{0}^{\xi} h_{N}dt \right) \right\}
\end{equation}
\end{lemma}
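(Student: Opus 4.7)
The plan is to apply Bobkov's theorem twice: first to the truncated model $([\xi, \xi+D], |\cdot|, h_D \L^1)$, which yields the identity $\I_D(v) = \min\{h_D(r_D^-(v)), h_D(r_D^+(v))\}$ recorded just before the statement, and then to the full model $([0,\pi], |\cdot|, h_N \L^1)$, whose profile is by construction $\I_\pi$. The pointwise relation $h_D = h_N/\lambda_D$ on $[\xi, \xi+D]$ will convert each of the two candidate values on the left of \eqref{E:I_{D}} into a rescaled evaluation of $\I_\pi$ on the right.

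First I would translate the two defining conditions for $r_D^\pm(v)$ into conditions on the cumulative distribution of $h_N$. Setting $H_N(r) := \int_0^r h_N(s)\,ds$, the equalities $h_D = h_N/\lambda_D$ and $\int_\xi^{\xi+D} h_N(s)\,ds = \lambda_D$ immediately give
\[H_N(r_D^-(v)) = H_N(\xi) + \lambda_D v, \qquad H_N(r_D^+(v)) = H_N(\xi) + \lambda_D(1-v),\]
and both right-hand sides lie in $(0,1)$ because $v \in (0,1)$ and $0 \leq H_N(\xi) \leq H_N(\xi+D) \leq 1$.

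Next I would exploit the symmetry $h_N(\pi - s) = h_N(s)$ about $\pi/2$, which implies $H_N(\pi - r) = 1 - H_N(r)$. Applying Bobkov's theorem to $([0,\pi], h_N \L^1)$, the two candidate minimizers (initial and terminal intervals of mass $w$) yield equal perimeter, so $\I_\pi(w) = h_N(H_N^{-1}(w))$ for every $w \in (0,1)$. Combining this with the volume identities from the previous step and noting $H_N(\xi) = \int_0^\xi h_N\,dt$, one obtains
\[h_D(r_D^-(v)) = \frac{1}{\lambda_D}\, \I_\pi\!\left(\lambda_D v + \int_0^\xi h_N\,dt\right), \qquad h_D(r_D^+(v)) = \frac{1}{\lambda_D}\, \I_\pi\!\left(\lambda_D(1-v) + \int_0^\xi h_N\,dt\right),\]
and taking the minimum recovers precisely \eqref{E:I_{D}}.

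The only subtlety is the bookkeeping of normalizations: one must check that the perimeter in the sense of \eqref{eq:defP} of an interval $[\xi, r]$ in $([\xi,\xi+D], h_D \L^1)$ equals the single boundary value $h_D(r)$ (only one endpoint lies in the interior), that $\lambda_D$ appears once in rescaling volumes and once in rescaling densities, and that the shifted argument lands in the domain where $H_N^{-1}$ is defined, which is guaranteed by the strict positivity of $h_N$ on $(0,\pi)$.
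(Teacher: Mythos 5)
Your proof is correct and takes essentially the same approach as the paper: both reduce to the defining volume conditions for $r_D^\pm(v)$, rewrite them in terms of the cumulative distribution of $h_N$, use the relation $h_D = h_N / \lambda_D$, and conclude via the identity $\I_\pi(w) = h_N(H_N^{-1}(w))$. Your exposition is slightly more systematic in making the symmetry $H_N(\pi - r) = 1 - H_N(r)$ and the consequent coincidence of the two Bobkov candidates explicit, but the core computation is the same.
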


\begin{proof}
Observe that
$$
\lambda_{D} v =  \int_{\xi}^{r_{D}^{-}(v)} h_{N}(t) \, dt = \int_{0}^{r_{D}^{-}(v)} h_{N}(t) \, dt  - \int_{0}^{\xi} h_{N}(t) \, dt ,
$$
and that $\lambda_{D} h_{D}(r_{D}^{-}(v))  =  h_{N}(r_{D}^{-}(v)) = \I_{\pi}(\lambda_{D}v + \int_{0}^{\xi} h_{N})$.
Analogously
\begin{align*}
\lambda_{D} v &=  \int_{r_{D}^{+}(v)}^{D + \xi} h_{N}(t) \, dt = \int_{r_{D}^{+}(v)}^{\pi} h_{N}(t) \, dt - \int_{D+\xi}^{\pi}h_{N}(t) \, dt  \\
&=  \int_{r_{D}^{+}(v)}^{\pi} h_{N}(t) \, dt 
- \left(1-\lambda_{D} - \int_{0}^{\xi}h_{N}(t) \, dt\right),
\end{align*}
showing that
$$
\lambda_{D} h_{D}(r_{D}^{+}(v)) = h_{N}(r_{D}^{+}(v)) = \I_{\pi}\left(\lambda_{D}(1- v) + \int_{0}^{\xi}h_{N}(t) \, dt\right).
$$
The claim follows.
\end{proof}

As we are going to consider the deficit, the following inequalities will be the relevant ones.

\begin{lemma}[Concavity of $I_{\pi}$]\label{L:concave-profile}
The following estimate holds:
\begin{align*}
\min & \left\{\I_{\pi}\left(\lambda_{D}v + \int_{0}^{\xi} h_{N} \, dt\right), \I_{\pi}\left(\lambda_{D} (1-v) + \int_{0}^{\xi} h_{N} \, dt\right) \right \} - \lambda_{D}\I_{\pi}(v)  \\
& \geq C_{N,v} \min \{ \lambda_{D}^{ \frac{N-1}{N}}, 1-\lambda_{D} \},
\end{align*}
where $C_{N,v}$ is an explicit constant  depending just on $N$ and $v$:
$$
C_{N,v}:=\min\left\{\cI_{\pi}(v)-v \cI'_{\pi}(v), \,\cI_{\pi}(v) + (1-v) \cI'_{\pi}(v),\,   \lim_{t\downarrow 0} \frac{\cI_{\pi}(t)}{t^{(N-1)/N}} \,  \min\{ v^{\frac{N-1}{N}}, (1-v)^{\frac{N-1}{N}}  \}  \right\}.
$$
\end{lemma}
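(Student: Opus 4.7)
My plan is to view the three terms in $C_{N,v}$ as arising from three different regimes in the parameters $\lambda_D \in (0,1]$ and $\beta := a/(1-\lambda_D) \in [0,1]$ (with $a := \int_0^\xi h_N\,dt \in [0, 1-\lambda_D]$), and to handle each regime by combining concavity of $\cI_\pi$ with its asymptotic behaviour at the endpoints. Writing $u_1 := \lambda_D v + (1-\lambda_D)\beta$ and $u_2 := \lambda_D(1-v) + (1-\lambda_D)\beta$, both of which are convex combinations lying in $[0,1]$, and assuming without loss of generality that $v \le 1/2$, the facts about $\cI_\pi$ that I will rely on are: concavity on $[0,1]$ with $\cI_\pi(0)=\cI_\pi(1)=0$; the symmetry $\cI_\pi(v)=\cI_\pi(1-v)$ and $\cI'_\pi(1-v)=-\cI'_\pi(v)$; and the asymptotic $\cI_\pi(t)/t^{(N-1)/N}\to c_0$ as $t\downarrow 0$, with $c_0=\lim_{t\downarrow 0}\cI_\pi(t)/t^{(N-1)/N}$.

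First I would handle the regime where $\lambda_D$ is close to $1$, so that the right-hand minimum equals $1-\lambda_D$. The identities $u_1-v=(1-\lambda_D)(\beta-v)$ and $u_2-(1-v)=(1-\lambda_D)(\beta-(1-v))$ allow for a first-order expansion of $\cI_\pi(u_i)$ around $v$ and $1-v$ respectively; using concavity of $\cI_\pi$ to control the non-positive second-order corrections, and then taking the minimum of $\cI_\pi(u_1)$ and $\cI_\pi(u_2)$, one extracts a lower bound whose leading coefficient in $\beta$ is the linear expression $\cI_\pi(v)+\min\{(\beta-v)\cI'_\pi(v),(1-v-\beta)\cI'_\pi(v)\}$. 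Minimising this over $\beta\in[0,1]$ one finds that its extrema are attained at the endpoints: at $\beta=0$ one obtains $\cI_\pi(v)-v\cI'_\pi(v)$, at $\beta=1$ one obtains $\cI_\pi(v)+(1-v)\cI'_\pi(v)$, which are exactly the first two terms of $C_{N,v}$.

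In the complementary regime where $\lambda_D$ is small, so that the right-hand minimum is $\lambda_D^{(N-1)/N}$, I would exploit the asymptotic $\cI_\pi(t)\sim c_0\,t^{(N-1)/N}$ at $t=0$ and, by symmetry, $\cI_\pi(t)\sim c_0(1-t)^{(N-1)/N}$ at $t=1$: one of $u_1,u_2$ is then of order $\lambda_D\min(v,1-v)$ (or $1-O(\lambda_D\min(v,1-v))$), so $\cI_\pi(u_i)\ge c_0\min(v,1-v)^{(N-1)/N}\lambda_D^{(N-1)/N}$, while the subtracted term $\lambda_D\cI_\pi(v)=O(\lambda_D)$ is of strictly smaller order than $\lambda_D^{(N-1)/N}$. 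This produces the third term $c_0\min\{v^{(N-1)/N},(1-v)^{(N-1)/N}\}$ in $C_{N,v}$. The main obstacle I anticipate is that the first-order tangent-line bound for a concave function is an \emph{upper} bound, not a lower one; turning it into the precise lower bound with constants $\cI_\pi(v)-v\cI'_\pi(v)$ and $\cI_\pi(v)+(1-v)\cI'_\pi(v)$ requires careful use of the minimum operation between $\cI_\pi(u_1)$ and $\cI_\pi(u_2)$ to pick out the favourable side of the tangent approximation at each extremal value $\beta\in\{0,1\}$.
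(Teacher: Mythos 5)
Your proposal follows essentially the same route as the paper's proof: the same two-regime split between $\lambda_D$ near $1$ (where the bound $1-\lambda_D$ is relevant and the two tangent-line constants appear) and $\lambda_D$ near $0$ (where the power-law asymptotic of $\cI_\pi$ at the endpoints produces the third constant), and the same normalization $\beta=a/(1-\lambda_D)\in[0,1]$, which is exactly the parameter the paper denotes $c$. Step~2 in your sketch matches the paper's Step~2.

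Where you diverge slightly is the $\lambda_D\to1$ regime. You plan a Taylor expansion at $v$ (resp.\ $1-v$) with an explicit nonpositive concavity remainder, and you correctly worry that this yields an \emph{upper} bound. The paper sidesteps this by writing $\cI_\pi(u_1)-\cI_\pi(v)=(u_1-v)\,\cI'_\pi(P_{\lambda_D})$ for an intermediate point $P_{\lambda_D}$ (mean value theorem, so an exact equality rather than a one-sided approximation) and then letting $\lambda_D\to1$, which forces $P_{\lambda_D}\to v$ and produces the limiting coefficient $\cI_\pi(v)+(c-v)\cI'_\pi(v)$; linearity in $c\in[0,1]$ then gives the two constants at the endpoints $c=0,1$. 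So the ``favourable side of the tangent'' mechanism you gesture at is unnecessary if you phrase things via the MVT. It is worth noting, though, that both your plan and the paper's proof as written only control the two boundary asymptotics $\lambda_D\to0$ and $\lambda_D\to1$; neither presents the (routine but unstated) compactness/continuity argument needed to upgrade these liminf computations to the claimed uniform pointwise inequality for all $\lambda_D\in(0,1)$. You are, in effect, on the same footing as the source.
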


\begin{proof}
{\bf Step 1.} \\ 
We fist study the asymptotics for $\lambda_{D}\to 1$. 
Note that
\begin{align*}
\I_{\pi}\left(\lambda_{D}v + \int_{0}^{\xi} h_{N}\right) - \lambda_{D} \I_{\pi}(v)
= &~  \I_{\pi}\left(\lambda_{D}v + \int_{0}^{\xi} h_{N}\right) - \I_{\pi}(v)  + (1-\lambda_{D})\I_{\pi}(v) \crcr
= &~  \left( v  (\lambda_{D} - 1)  + \int_{0}^{\xi} h_{N}\right)   \I_{\pi}'(P_{\lambda_{D}}) + (1-\lambda_{D})\I_{\pi}(v),
\end{align*}
for some $P_{\lambda_{D}}$ between $\lambda_{D} v + \int_{0}^{\xi} h_{N}$ and  $v$. 
In particular for $\lambda_{D} \to 1$, necessarily $D \to \pi$ and $\xi \to 0$ hence
$$
\liminf_{\lambda_{D} \to 1} \frac{ \int_{0}^{\xi} h_{N}}{1-\lambda_{D}} = \liminf_{\lambda_{D} \to 1} \frac{ \int_{0}^{\xi} h_{N}}{ \int_{0}^{\xi} h_{N} \, dt+  \int_{D+\xi}^{\pi} h_{N} \, dt}  = c, 
$$
with $0 \leq c \leq 1$. In particular 
$$
 \liminf_{\lambda_{D} \to 1}   \frac{\I_{\pi}\left(\lambda_{D}v + \int_{0}^{\xi} h_{N}\right)  - \lambda_{D} \I_{\pi}(v) }{1-\lambda_{D}} = \I_{\pi}(v) +(  c - v) \I_{\pi}'(v). 
$$
Being a linear function of $c\in [0,1]$, the last quantity is larger than the minimum between $\I_{\pi}(v)  - v \I_{\pi}'(v)$ and $\I_{\pi}(v) +( 1 - v) \I_{\pi}'(v)$.
Since $\I_{\pi}(v) = \I_{\pi}(1-v)$, the strict concavity of $\I_{\pi}$ yields that  both are strictly positive. 
\\ Using again the symmetry   $\I_{\pi}(v) = \I_{\pi}(1-v)$, with the  arguments above one can prove that the same lower bounds hold for $\liminf_{\lambda_{D} \to 1} \frac{\I_{\pi}\left(\lambda_{D}(1-v) + \int_{0}^{\xi} h_{N}\right)  - \lambda_{D} \I_{\pi}(v) }{1-\lambda_{D}}$.

\medskip

{\bf Step 2.}
Again from  the strict convexity of $\I_{\pi}$, for $\lambda_{D}$ sufficiently close to $0$,  we have that 
$\I_{\pi}(\lambda_{D}v + \int_{0}^{\xi} h_{N}) \geq \I_{\pi}(\lambda_{D}v)$ 
yielding
$$
\liminf_{\lambda_{D}\to 0} \frac{\I_{\pi}(\lambda_{D}v + \int_{0}^{\xi} h_{N})}{ (\lambda_{D})^{(N-1)/N}}  \geq   \left(\lim_{t\downarrow 0} \frac{\cI_{\pi}(t)}{t^{(N-1)/N}} \right)  v^{\frac{N-1}{N}}.
$$
Analogously we have 
$
\liminf_{\lambda_{D}\to 0} \frac{\I_{\pi}(\lambda_{D}(1-v) + \int_{0}^{\xi} h_{N})}{ (\lambda_{D})^{(N-1)/N}}  \geq   \left(\lim_{t\downarrow 0} \frac{\cI_{\pi}(t)}{t^{(N-1)/N}} \right)  (1-v)^{\frac{N-1}{N}},
$
completing the proof.
\end{proof}


\section{Reduction to the one dimensional case}\label{s:reductionto1d}

From now on we consider fixed a metric measure space $(X,\sfd,\mm)$ that is essentially non-branching and it verifies $\CD(K,N)$ with $K>0$.
Recall that with no loss in generality, we can assume $\supp[\mm] = X$ and  $K = N-1$, giving $\diam(X) \leq \pi$; moreover $(X,\sfd)$ is compact.

We also fix once for all $E \subset X$ together with the associated localization given by the $L^{1}$-optimal transport problem
between
$$
\mu_{0} : = \frac{1}{v} \, \chi_{E} \cdot \mm, \qquad  \mu_{1} : = \frac{1}{1-v}\, \chi_{E^{c}} \cdot \mm,
$$
with $v = \mm(E)$. Following Section \ref{Ss:localization} and Section \ref{Ss:L1OT},
we fix also a $1$-Lipschitz Kantorovich potential $\f : X \to \R$ (it is actually unique up to a constant) such that if
\begin{equation}\label{E:Gamma}
\Gamma : = \{  (x,y) \in X\times X \colon \f(x) - \f(y) = \sfd(x,y)\},
\end{equation}
then a transport plan $\pi$ is optimal if and only if $\pi(\Gamma) = 1$.
Then from $\f$ one obtains the family of transport rays $\{ X_{q}\}_{q \in Q}$ with $Q$ Borel subset of the transport set $\T$;
it is also immediate (see Theorem \ref{T:localize}) to observe that $\mm(X \setminus \T)=0$,
so we have the following disintegration formula:
$$
\mm = \int_{Q} \mm_{q}\, \qq(dq),  \qquad  \qq \in \mathcal{P}(Q).
$$
Moreover for $\qq$-a.e.  $q \in Q$:
\begin{itemize}
\item[-] $\mm_{q}(X_{q}) = 1$;
\item[-] $(X_{q}, \sfd, \mm_{q})$ is a  $\CD(N-1,N)$ space (see Section \ref{S:back});
\item[-] $\mm_{q}(E) = \mm(E) = v \in (0,1)$.
\end{itemize}
Define the deficit $\delta(E) : = \PP(E) - \I_{\pi}(v)$.

\begin{lemma}
The following inequalities hold true:
\begin{equation}\label{E:starting}
\delta(E) \geq \int_{Q} \PP_{q}(E_{q}) - \I_{\pi}(v) \, \qq(dq) \geq \int_{Q} \I_{D_{q}}(v) - \I_{\pi}(v) \, \qq(dq),
\end{equation}
where $\I_{D} = \I_{N-1,N,D}$, for any $D \in [0,\pi]$ and $D_{q} = |X_{q}| = \sfd(P_{S}(q),P_{N}(q))$.
\end{lemma}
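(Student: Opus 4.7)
I handle the two inequalities separately. The second is an immediate consequence of Theorem \ref{theorem:LGM} applied to each one-dimensional factor. Indeed, for $\qq$-a.e.\ $q\in Q$, the space $(X_q,\sfd,\mm_q)$ is a one-dimensional metric measure space isometric to $([0,D_q],|\cdot|,h_q\L^1)$ with $h_q$ a $\CD(N-1,N)$ density and $\mm_q(X_q)=1$, so it satisfies the $\CD(N-1,N)$ condition. Since $\mm_q(E_q)=v$, Theorem \ref{theorem:LGM} yields $\PP_q(E_q)\ge \cI_{N-1,N,D_q}(v)=\cI_{D_q}(v)$; integrating against $\qq$ (a probability measure, so the constant $\cI_\pi(v)$ passes through the integral) gives the second inequality.

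The first inequality is equivalent to the \emph{localization of the perimeter}
\[
\PP(E)\;\ge\;\int_Q \PP_q(E_q)\,\qq(dq),
\]
which I prove by slicing. Fix a minimizing sequence $u_n\in\Lip(X)$ with $u_n\to \chi_E$ in $L^1(X,\mm)$ and $\int_X|\nabla u_n|\,\mm\to \PP(E)$. The disintegration formula $\mm=\int_Q\mm_q\,\qq(dq)$ rewrites
\[
\int_X|\nabla u_n|\,\mm \;=\;\int_Q\left(\int_{X_q}|\nabla u_n|(x)\,\mm_q(dx)\right)\qq(dq).
\]
For $\qq$-a.e.\ $q$, $X_q$ is a geodesic on which the intrinsic distance coincides with the restriction of $\sfd$, so the slope $|\nabla_{X_q}(u_n|_{X_q})|$, computed in the one-dimensional factor $(X_q,\sfd,\mm_q)$, is pointwise bounded from above by $|\nabla u_n|$ on $X_q$. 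Hence the inner integral dominates $\int_{X_q}|\nabla_{X_q}(u_n|_{X_q})|\,\mm_q$.

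It remains to ensure that $u_n|_{X_q}$ is a legitimate competitor in the definition of $\PP_q(E_q)$ for $\qq$-a.e.\ $q$. By Fubini applied to the disintegration,
\[
\int_Q \|u_n-\chi_{E_q}\|_{L^1(X_q,\mm_q)}\,\qq(dq)=\|u_n-\chi_E\|_{L^1(X,\mm)}\;\longrightarrow\;0,
\]
so after extracting a subsequence (not relabelled) we have $u_n|_{X_q}\to \chi_{E_q}$ in $L^1(X_q,\mm_q)$ for $\qq$-a.e.\ $q$. Fatou's lemma then yields
\[
\PP(E)=\lim_n\int_X|\nabla u_n|\,\mm\;\ge\;\int_Q\liminf_n\int_{X_q}|\nabla_{X_q}(u_n|_{X_q})|\,\mm_q\,\qq(dq)\;\ge\;\int_Q\PP_q(E_q)\,\qq(dq),
\]
which is the claimed inequality. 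The main delicate step is the slicing bound for the slope, which uses crucially that $X_q$ is a geodesic and thus inherits its intrinsic metric from the ambient $\sfd$; the rest is a standard Fubini--Fatou passage to the limit, once one has the disintegration provided by the localization Theorem \ref{T:localize}.
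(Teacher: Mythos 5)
Your two-inequality plan matches the paper's structure: the first inequality is the ``localization of the perimeter'' obtained by slicing plus Fatou together with the pointwise bound $|\nabla u_n|\circ X_q\ge |(u_n\circ X_q)'|$, and the second follows from Theorem~\ref{theorem:LGM} applied to each one-dimensional conditional $(X_q,\sfd,\mm_q)$, which is a $\CD(N-1,N)$ space of diameter $D_q$ with $\mm_q(E_q)=v$. Both steps are correctly executed, including the observation that one extracts a single subsequence along which $u_n|_{X_q}\to\chi_{E_q}$ in $L^1(X_q,\mm_q)$ for $\qq$-a.e.\ $q$.

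However, there is a genuine gap: you never establish that $q\mapsto\PP_q(E_q)$ is $\qq$-measurable, which is needed for the middle integral in \eqref{E:starting} to make sense in the first place (and is in fact used again in later steps of the paper, e.g.\ in Lemma~\ref{L:rayNS}). Your Fatou chain shows $\PP(E)\ge\int_Q\ell(q)\,\qq(dq)$ with $\ell(q):=\liminf_n\int_{X_q}|\nabla_{X_q}(u_n|_{X_q})|\,\mm_q$ measurable, and that $\ell(q)\ge\PP_q(E_q)$ for $\qq$-a.e.\ $q$, but this does not by itself show that $\PP_q(E_q)$ is a measurable function of $q$. This measurability issue is actually the technical heart of the paper's proof: after using your Fatou argument to get $\PP_q(E_q)<\infty$ for $\qq$-a.e.\ $q$, the paper represents $E_q$ as a countable union of intervals and writes $\PP_q(E_q)=\sum_i h_q(x_{q,i})$; it then builds an analytic set whose $q$-sections enumerate the boundary points of $E_q$, invokes a Lusin-type countable selection theorem (\cite[Theorem 5.10.3]{Srivastava}) to obtain Borel maps $q\mapsto x_i(q)$, and concludes that $q\mapsto\sum_i h_q(x_i(q))$ is $\qq$-measurable. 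You should supply this (or an equivalent) measurability argument; without it the lemma's statement is not fully justified.
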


\begin{proof}
As observed in Section \ref{Ss:L1OT}, the map $Q \ni q \mapsto |X_{q}|$ is Borel and therefore the same holds for
$Q \ni q \mapsto \lambda_{q} : = \lambda_{|X_{q}|}$, where  $\lambda_{D}$ has been defined in \eqref{E:volume-profile}.
In particular, the last integral makes sense.

To obtain the measurability of $q \mapsto \PP_{q}(E_{q})$ one can argue as follows. Consider any sequence $u_{n} \to \chi_{E}$
such that
$$
\PP(E) + \ve \geq \lim_{n\to \infty} \int_{Q} |\nabla u_{n}| \mm =  \lim_{n\to \infty} \int_{Q}  \int |\nabla u_{n}| \mm_{q} \, \qq(dq) \geq
\int_{Q} \liminf_{n\to \infty}   \int |\nabla u_{n}| \mm_{q} \, \qq(dq),
$$
where the last inequality follows from Fatou's Lemma. This implies that for $\qq$-a.e. $q \in Q$
$$
\liminf_{n\to \infty}   \int |\nabla u_{n}| \mm_{q} < \infty.
$$
Hence $\PP_{q}(E_{q}) < \infty$ $\qq$-a.e and therefore we can deduce that  $\qq$-a.e. $E_{q}$ has representative given by a
countable union of intervals; in particular
$$
\PP_{q}(E_{q}) = \sum_{i} h_{q}(x_{q,i}),
$$
where $x_{q,i}$ belongs to the boundary of $E_{q}$ in $X_{q}$.  To obtain $\qq$-measurability of $q \mapsto \PP_{q}(E_{q})$ it is then enough
to prove $\qq$-measurability of $\sum_{i} h_{q}(x_{q,i})$.

For this purpose we consider $\tilde Q \subset Q$ Borel such that for each
$q \in \tilde Q$ the set $\{x_{q,i} \}_{i}$ is countable.
Moreover for each $q \in \tilde Q$, points of the boundary of $E_{q}$ in $X_{q}$ can be obtained as follows: for each $n$ consider
$$
\Lambda_{n} : = \{ (q,x,y,z) \in \tilde Q \times \T^{b} \times \T^{b} \times \T^{b} \colon (q,x), (q,y) \in R^{b},\, y \in E, \, z \in E^{c}, \sfd(x,y), \sfd(x,z) \leq 1/n \},
$$
and define $\Lambda : = \cap_{n} P_{1,2} (\Lambda_{n})$, where $P_{1,2}$ stands for the projection on the first and second component; notice that for each $n \in \N$ $P_{1,2} (\Lambda_{n})$ is analytic and therefore $\Lambda$ is analytic as well.
Now we notice that each section $\Lambda(q):=\{x \in \T^{b} \, : \, (q,x)\in \Lambda\}$ is countable; then we can invoke a classical result of Lusin (see \cite[Theorem 5.10.3]{Srivastava})
yielding the existence of countably many $x_{i} : \tilde Q \to \T^{b}$ such that
$$
\Lambda = \cup_{i\in \N} \{ (q,x_{i}(q)) \colon q \in \tilde Q\},
$$
with $\{ (q,x_{i}(q)) \colon q \in \tilde Q\}$, analytic; in particular for each $i \in \N$ the map $\tilde Q\ni q \mapsto x_{i}(q)$ is Borel;
it follows that for each $q \in \tilde Q$, the map $\sum_{i} h_{q}(x_{i}(q)) = \PP_{q}(E_{q})$ is $\qq$-measurable.
Hence also the second integral makes sense. To conclude it is enough to use Fatou's Lemma as before and observe that $|\nabla u_n| \circ X_q \geq |(u_n \circ X_q)'| $ $\qq$-a.e. (here, with a slight abuse of notation, we denote by $X_{q}:[0,D_{q}] \to X$ the ray map).
\end{proof}

\subsection{Long rays}

For any $v \in (0,1)$ clearly $\I_{\pi}(v) = \I_{\pi}(1-v)$, then Lemma \ref{L:concave-profile} applies together with Lemma \ref{L:I_{D}}
yielding from \eqref{E:starting} the following inequality
\begin{align}\label{E:stepzero}
\delta(E) \geq &~ C_{N,v} \int_{Q } \frac{1}{\lambda_{q}} \min \{ \lambda_{q}^{(N-1)/N}, 1-\lambda_{q} \} \,\qq(dq) \crcr
= &~ C_{N,v}\left( \int_{Q_{s}} \lambda_{q}^{-1/N} \qq(dq)  + \int_{Q_{\ell}}  \frac{1-\lambda_{q}}{\lambda_{q}} \, \qq(dq) \right),
\end{align}
 where $s$ and $\ell$ stand for ``short'' and ``long'',  and
\begin{align*}
Q_{s} : = &~\{ q \in Q \colon \lambda_{q}^{(N-1)/N} \leq 1- \lambda_{q} \} = \{ q \in Q \colon \lambda_{q} \leq \eta_{N} \} , \\
Q_{\ell} : = &~\{ q \in Q \colon \lambda_{q}^{(N-1)/N} > 1- \lambda_{q} \}= \{ q \in Q \colon \lambda_{q} > \eta_{N} \},
\end{align*}
where $0 <\eta_{N} < 1$ is the unique solution of $x^{(N-1)/N} =  1-x $. Note that $\eta_{N} \to 1/2$ as $N\to \infty$ and
$Q_{s},Q_{\ell}$ are Borel subsets of $Q$.
We continue noticing
$$
\delta(E ) \geq C_{N,v} \int_{Q_{s}} \eta_{N}^{-1/N} \,\qq(dq) = C_{N,v}\eta_{N}^{-1/N} \qq(Q_{s}),
$$
and in particular
\begin{equation}\label{E:deficit-short}
\mm\left( E \cap \big(\cup_{q\in Q_{s}}X_{q}\big)  \right) \leq \qq(Q_{s}) \leq   \eta_{N}^{1/N} C_{N,v}^{-1} \delta(E);
\end{equation}
meaning that  most of the $\mm$-measure of $E$ must be contained in the set spanned by the family of rays denoted with $Q_{\ell}$.

Since $Q_{\ell} \subset \{q\in Q \colon D_{q} \geq D_{N}\}$ for some $D_{N} \in (0,\pi)$, we obtain the next proposition.

\begin{proposition}\label{P:Q2-2}
The following estimate holds
\begin{equation}\label{E:Q2-2}
\delta(E) \geq C_{N,v} C'_{N,v} \int_{Q_{\ell}} (\pi - D_{q})^{N} \, \qq(dq).
\end{equation}
\end{proposition}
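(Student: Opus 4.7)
The plan is to start from \eqref{E:stepzero} and reduce the inequality to a ray-by-ray estimate. Dropping the non-negative $Q_s$-integral and using that $\lambda_q \le 1$, so that $(1-\lambda_q)/\lambda_q \ge 1 - \lambda_q$, one obtains
\begin{equation*}
\delta(E) \geq C_{N,v}\int_{Q_\ell}(1 - \lambda_q)\,\qq(dq).
\end{equation*}
Hence it suffices to exhibit a constant $c_N > 0$, depending only on $N$, such that
\begin{equation*}
1 - \lambda_q \geq c_N (\pi - D_q)^N \qquad \text{for every } q \in Q_\ell.
\end{equation*}

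To prove this pointwise estimate I would set $a := \xi_q$ and $b := \pi - D_q - \xi_q$, so that $a, b \geq 0$ and $a + b = \pi - D_q$. By the symmetry $\sin(\pi - t) = \sin(t)$ and the definition \eqref{E:volume-profile},
\begin{equation*}
\omega_N(1 - \lambda_q) = \int_0^{a}\sin^{N-1}(t)\,dt + \int_0^{b}\sin^{N-1}(t)\,dt,
\end{equation*}
so the task becomes bounding this below by a constant multiple of $(a+b)^N$. I would split into two regimes according to whether $\pi - D_q \le 1$ or $\pi - D_q > 1$. In the first regime, $a, b \le 1 \le \pi/2$, so Jordan's inequality $\sin(t) \ge (2/\pi) t$ yields
\begin{equation*}
\omega_N (1-\lambda_q) \ge \frac{(2/\pi)^{N-1}}{N}\,(a^N + b^N),
\end{equation*}
and convexity of $t \mapsto t^N$ gives $a^N + b^N \ge 2^{1-N}(a+b)^N$, delivering the desired bound. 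In the second regime, at least one of $a,b$ exceeds $1/2$, so $\omega_N(1 - \lambda_q) \ge \int_0^{1/2}\sin^{N-1}(t)\,dt =: c'_N > 0$; since $(\pi - D_q)^N \le \pi^N$, the pointwise inequality follows after adjusting $c_N$ accordingly.

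Taking the worse of the two constants and integrating over $Q_\ell$ yields \eqref{E:Q2-2} with $C'_{N,v} := c_N$. The only subtle point in the argument is the need to avoid using $\sin(t) \ge (2/\pi)t$ outside $[0,\pi/2]$, which is precisely why the two-regime split is introduced. I do not foresee a real obstacle here: the exponent $N$ in the estimate is exactly the order of vanishing of the weight $\sin^{N-1}$ at the endpoints $0$ and $\pi$, and this matching is what makes the bound sharp.
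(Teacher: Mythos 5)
Your proof is correct and takes essentially the same route as the paper's: both start from \eqref{E:stepzero}, drop the $Q_s$ term, use $(1-\lambda_q)/\lambda_q\ge 1-\lambda_q$, and reduce to the pointwise bound $1-\lambda_q\geq c_N(\pi-D_q)^N$ via the identity $1-\lambda_q=\int_0^{\xi_q}h_N+\int_{D_q+\xi_q}^\pi h_N$ and the power-mean inequality $a^N+b^N\geq 2^{1-N}(a+b)^N$. The only difference is cosmetic: where the paper establishes the pointwise bound by computing $\lim_{D_q\to\pi}(1-\lambda_q)/(\pi-D_q)^N>0$ and invoking uniformity on $Q_\ell$, your two-regime split (Jordan's inequality when $\pi-D_q\le 1$, an absolute lower bound otherwise) makes the constant fully explicit.
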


\begin{proof}
From \eqref{E:stepzero}
$$
\delta(E) \geq C_{N,v} \int_{Q_{\ell}} \frac{1-\lambda_{q}}{\lambda_{q}} \,\qq(dq) \geq C_{N,v} \int_{Q_{\ell}} (1-\lambda_{q}) \,\qq(dq).
$$
We now study the behavior of $1-\lambda_{q}$ when $D_{q}$ approaches $\pi$ and $D_{N}$.
Since
$$
1-\lambda_{q} = 1-  \int_{\xi_{q}}^{D_{q}+\xi_{q}}h_{N}(t)\,dt = \int_{0}^{\xi_{q}} h_{N}(t)\,dt +\int_{D_{q}+\xi_{q}}^{\pi} h_{N}(t)\,dt,
$$
it follows that
\begin{equation}\label{eq:alpha}
\lim_{D_{q} \to \pi} \frac{1-\lambda_{q}}{ (\pi -D_{q})^{N}} = C_{N}>0;
\end{equation}
notice indeed that $\xi_{q}^{N} + (\pi - D_{q} - \xi_{q})^{N} \geq 2^{1- N} (\pi - D_{q})^{N}$.

It follows therefore the existence  of  an explicit, strictly positive constant $C'_{N,v}$ such that
$$
1-\lambda_{q} \geq C'_{N,v} (\pi - D_{q})^{N}.
$$
The claim follows.
\end{proof}

We can therefore obtain the first main result of the paper (first claim in Theorem \ref{thm:CD}).

\begin{theorem}\label{T:Q2-3}

There exists at least one $\bar q \in Q$ such that
\begin{equation}\label{E:Q2-3}
(\pi - D_{\bar q})^{N} \leq \delta(E) \frac{1}{C'_{N,v} (C_{N,v}  -\delta(E) \eta_{N}^{1/(N-1)}) } \leq C''_{N,v} \delta(E).
\end{equation}
In particular, there exists a constant $C(N,v)$ depending only on $N > 1$ and $v \in (0,1)$ such that
$$
\pi-\diam(X)\leq C(N,v) \; \delta(E)^{1/N}.
$$
\end{theorem}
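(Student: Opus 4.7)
The proof will be a direct corollary of the integrated estimate in Proposition \ref{P:Q2-2} combined with the short-ray control \eqref{E:deficit-short}, via a pigeonhole argument on the probability space $(Q,\mathscr{Q},\qq)$.

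First, I would rewrite the bound \eqref{E:Q2-2} from Proposition \ref{P:Q2-2} as
\[
\int_{Q_{\ell}} (\pi - D_{q})^{N}\, \qq(dq) \le \frac{\delta(E)}{C_{N,v}\, C'_{N,v}}\,,
\]
so that, setting $A := \qq(Q_{\ell}) \in [0,1]$, an averaging argument yields the existence of $\bar q \in Q_{\ell}$ with
\[
(\pi - D_{\bar q})^{N} \le \frac{1}{A} \int_{Q_{\ell}}(\pi-D_q)^N \,\qq(dq)\le \frac{\delta(E)}{C_{N,v}\, C'_{N,v}\, A}\,,
\]
provided $A>0$. (If $A=0$, then by \eqref{E:deficit-short} the deficit $\delta(E)$ is bounded below by a positive constant and the statement becomes trivial after enlarging $C''_{N,v}$ appropriately to cover this regime.)

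Next I would lower-bound $A = \qq(Q_{\ell}) = 1 - \qq(Q_{s})$ using \eqref{E:deficit-short}, which gives $\qq(Q_{s}) \le \eta_{N}^{1/N} C_{N,v}^{-1}\, \delta(E)$ (up to the precise exponent of $\eta_N$, which is the minor discrepancy between \eqref{E:deficit-short} and the form announced in \eqref{E:Q2-3}; either one is acceptable because it only affects the value of the constant). Thus
\[
A \ge 1 - \eta_{N}^{1/N} C_{N,v}^{-1}\, \delta(E) = \frac{C_{N,v} - \eta_{N}^{1/N}\delta(E)}{C_{N,v}}\,,
\]
and substituting back gives exactly the first inequality of \eqref{E:Q2-3}, with $\bar q \in Q_\ell \subset Q$. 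The second inequality of \eqref{E:Q2-3} follows upon restricting to $\delta(E)$ smaller than a threshold $\delta_{N,v}$ (say half of $C_{N,v}/\eta_N^{1/N}$), and once again the regime $\delta(E) \ge \delta_{N,v}$ is trivial.

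Finally, to deduce the diameter estimate, I would note that for the chosen $\bar q$ one has by construction $D_{\bar q} = \sfd(P_S(\bar q), P_N(\bar q)) \le \diam(X) \le \pi$, hence
\[
\pi - \diam(X) \le \pi - D_{\bar q} \le \bigl(C''_{N,v}\bigr)^{1/N}\,\delta(E)^{1/N}\,,
\]
which is exactly the desired conclusion with $C(N,v) := (C''_{N,v})^{1/N}$. There is really no substantial obstacle here: the bulk of the work has been done in establishing \eqref{E:deficit-short} and \eqref{E:Q2-2}, and the present theorem is the clean extraction of a single good ray $\bar q$ via mean-value/pigeonhole and the control that most of the $\qq$-mass lies in the long-ray set $Q_{\ell}$.
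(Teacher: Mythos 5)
Your proof is correct and follows exactly the paper's argument: apply the mean-value/pigeonhole argument to the integral bound from Proposition \ref{P:Q2-2} restricted to $Q_\ell$, then lower-bound $\qq(Q_\ell)=1-\qq(Q_s)$ via \eqref{E:deficit-short}, and finally observe $D_{\bar q}\le\diam(X)$. You also correctly flag the exponent mismatch between $\eta_N^{1/N}$ in \eqref{E:deficit-short} and $\eta_N^{1/(N-1)}$ appearing in \eqref{E:Q2-3} (a typographical slip in the paper, harmless since it only changes the constant), and you handle the edge cases ($\qq(Q_\ell)=0$, large $\delta(E)$) which the paper silently leaves to the reader.
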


\begin{proof}
Just observe that from Proposition \ref{P:Q2-2} there exists at least one $\bar q \in Q$ such that
$$
(\pi - D_{\bar q})^{N} \leq \delta(E) \frac{1}{\qq(Q_{\ell}) C_{N,v}C'_{N,v}}.
$$
Then, from \eqref{E:deficit-short}, we get that
\begin{equation}\label{eq:qqQell}
\qq(Q_{\ell}) = 1 - \qq(Q_{s}) \geq 1 - \delta(E) \frac{\eta_{N}^{1/(N-1)}}{C_{N,v}}.
\end{equation}
The claim follows.
\end{proof}

\noindent
Since $D_{\bar q}$ of Theorem \ref{T:Q2-3} will play a key role,  from now on we consider $\bar q$ fixed and given by Theorem \ref{T:Q2-3}.


\section{Structure of the transport set}\label{S:structure}

So far we have observed that the distance of $|X_{q}|$ from $\pi$ is controlled by the deficit $\delta(E)$ (see Proposition \ref{P:Q2-2}).
In this section we use this information to prove that most of the rays starts close to $P_{S}(\bar q)$ and finishes close to $P_{N}(\bar q)$.
The optimality of $\f$ will be crucial.
We will use the following result (that is of interest in itself) giving a bound on the diameter of the complement of a metric ball.

\begin{proposition}\label{P:antipodal}
Given $N> 1$ there exists $C_{N}>0$ such that the next statement holds.

Let $(X,\sfd, \mm)$ be $\CD(N-1,N)$ space (actually $\MCP(N-1,N)$ would be enough).
Let $x,y,z \in X$ be such that $\sfd(x,y)=\sfd(x,z) \geq D$.
Then $\sfd(y,z)\leq C_{N} (\pi-D)$.   In particular, for every $x_{1}\in X$  there exists $x_{2}=x_{2}(x_{1})$ such that
\begin{equation}\label{eq:BpiepsBepsCD}
X\setminus B_{D}(x_{1}) \subset B_{C_{N} \, (\pi - D)} (x_{2}).
\end{equation}
\end{proposition}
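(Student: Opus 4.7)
The plan is to prove the main inequality $\sfd(y,z)\leq C_N(\pi-D)$ via a disjoint--balls argument combined with the Bishop--Gromov volume comparison, which is available under $\CD(N-1,N)$ (and even $\MCP(N-1,N)$). Observe first that the argument below only uses the one-sided hypothesis $\sfd(x,y),\sfd(x,z)\geq D$, so that the ``in particular'' assertion then follows immediately: if $X\setminus B_D(x_1)=\emptyset$ there is nothing to prove, while otherwise one picks $x_2$ to be any element of $X\setminus B_D(x_1)$ and applies the main inequality to the triple $(x_1,x_2,z)$ for arbitrary $z\in X\setminus B_D(x_1)$.

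Since $\mm(X)=1$ and $\mathrm{diam}(X)\leq\pi$ by Bonnet--Myers, Bishop--Gromov yields for every $w\in X$ and $r\in(0,\pi]$ the inequality
\[
\mm(\overline{B}_r(w))\;\geq\; \frac{v(r)}{\omega_N}, \qquad v(r):=\int_0^r \sin^{N-1}(t)\,dt.
\]
Set $L:=\sfd(y,z)$ and $r:=L/2$; by $L\leq \pi$ we have $r\leq \pi/2$. The open balls $B_r(y)$ and $B_r(z)$ are disjoint by the choice of $r$, and the triangle inequality gives $B_r(y)\cup B_r(z)\subset X\setminus \overline{B}_{D-r}(x)$ (a case separation handles $r\geq D$, which only occurs for $\pi-D$ bounded away from $0$ and can be absorbed in the final constant). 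Combining the Bishop--Gromov lower bounds on $\mm(B_r(y))$ and $\mm(B_r(z))$ with the upper bound on $\mm(X\setminus \overline{B}_{D-r}(x))$ coming from $\mm(\overline{B}_{D-r}(x))\geq v(D-r)/\omega_N$, one obtains
\[
\frac{2v(r)}{\omega_N}\;\leq\; \mm(B_r(y))+\mm(B_r(z))\;\leq\; 1-\frac{v(D-r)}{\omega_N}\;=\;\frac{v(\pi-D+r)}{\omega_N},
\]
where the last identity is the change of variable $u=\pi-t$ combined with $\sin(\pi-t)=\sin(t)$.

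The problem is thereby reduced to the scalar inequality $2v(r)\leq v(\pi-D+r)$. Since $v(s)=s^N/N\cdot(1+O(s^2))$ as $s\to 0^+$, this asymptotically forces $2r^N\leq (\pi-D+r)^N$, i.e.\ $r\leq (\pi-D)/(2^{1/N}-1)$ up to a multiplicative constant close to $1$; in the complementary regime where $\pi-D$ is bounded below by a constant depending only on $N$, the desired conclusion already follows from $L\leq\pi$ at the cost of enlarging $C_N$. The only real obstacle is thus analytic, namely extracting a uniform constant from the scalar inequality $2v(r)\leq v(\pi-D+r)$; the key geometric insight is the reduction itself, which uses the curvature-dimension hypothesis twice: to lower-bound each small ball around $y$ and $z$, and to lower-bound $\mm(\overline{B}_{D-r}(x))$ through its complement.
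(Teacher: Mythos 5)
Your geometric reduction is exactly the one in the paper: you place the three pairwise-disjoint balls $B_r(y)$, $B_r(z)$, $B_{D-r}(x)$ in the unit-mass space, apply Bishop--Gromov (in the form $\mm(B_\rho(w))\geq v(\rho)/\omega_N$, valid since $\mm(B_\pi(w))=1$ and the model ratio is monotone), and arrive at the scalar inequality $2v(r)\leq v(\pi-D+r)$, which is the same as the paper's $\mu([0,r])\leq \mu([D-r,\pi-r])$ after unwinding $v(\pi-r)=\omega_N-v(r)$.

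The gap is in the remaining analytic step, which you explicitly defer and which is not as routine as your heuristic suggests. The expansion $v(s)=s^N/N\cdot(1+O(s^2))$ controls the inequality only when both $r$ and $\pi-D+r$ are small; it does not by itself rule out the regime where $\pi-D$ is small but $r$ is bounded away from $0$. The cheap estimate one gets there, namely $2v(r)\leq v(\pi-D+r)\leq v(r)+(\pi-D)$ (using $\sin\leq 1$), only yields $v(r)\leq \pi-D$, i.e.\ the \emph{sublinear} bound $r\lesssim (\pi-D)^{1/N}$, which is strictly weaker than the linear bound you need. So ``the only real obstacle is analytic'' is true, but that obstacle is the actual content of the estimate, and it has not been overcome.

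The paper closes this in its Step 2. First dispatch the trivial case $r\leq \pi-D$. Otherwise $s:=(\pi-D)/r<1$, and one uses $\sin t\leq \pi-t$ on $[D-r,\pi-r]$ to get
\[
v(r)\;\leq\;\int_{D-r}^{\pi-r}\sin^{N-1}(t)\,dt\;\leq\;\int_{D-r}^{\pi-r}(\pi-t)^{N-1}\,dt
\;=\;\frac{r^N}{N}\Bigl[(1+s)^N-1\Bigr]\;\leq\;\frac{r^N}{N}\,(2^N-1)\,s,
\]
where the last step is the convexity bound $(1+s)^N-1\leq (2^N-1)s$ for $s\in[0,1]$. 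Combining with the fact that $c_N:=\inf_{r\in(0,\pi)}v(r)/r^N>0$ gives $c_N r^N\leq \frac{2^N-1}{N}\,r^{N-1}(\pi-D)$, hence $r\leq \frac{2^N-1}{Nc_N}(\pi-D)$. This is the uniform linear bound; without an argument of this type your proof is incomplete.
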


\begin{proof}
{\bf Step 1.}
Without loss of generality we can assume $\mm(X)=1$. Call $r:=\frac{1}{2} \sfd(y,z)$  and let $\mu$ be the $(N-1,N)$-model measure
on $[0,\pi]$, i.e.  $\mu = h_{N} \L^{1}$.
First of all by Bishop-Gromov inequality we have
\begin{equation}\label{eq:BSaux}
\mm(B_{r}(y)) \geq \mu([0,r]), \quad \mm(B_{r}(z)) \geq \mu([0,r]), \quad \mm(B_{D-r}(x)) \geq \mu([0,D-r]).
\end{equation}
Moreover, by construction the sets $B_{D-r}(x), B_{r}(y)$ and $B_{r}(z)$ are pairwise disjoint.
Thus
\begin{align*}
1&= \mm(X) \geq  \mm(B_{D-r}(x)) + \mm(B_{r}(y)) +  \mm(B_{r}(z)) \overset {\eqref{eq:BSaux}}{\geq}   \mu([0,{D-r}]) + 2 \mu([0,{r}])  \\
&= \big( \mu([0,{\pi-r}])- \mu([D-r,\pi-r]) \big)   + \mu([\pi-r, \pi])+ \mu([0,{r}])   \\
&=  \big( \mu([0,{\pi-r}])  + \mu([\pi-r, \pi]) \big)  +  \mu([0,{r}])  - \mu([D-r,\pi-r])  \\
& = \mu([0,\pi])  +  \mu([0,r])   -  \mu([D-r, \pi-r]) = 1 +   \mu([0,r])   -  \mu([D-r, \pi-r]).
\end{align*}
It follows that  $ \mu([0,r]) \leq  \mu([D-r, \pi-r])$.
\medskip

{\bf Step 2.}
We consider two different cases: if $r \leq \pi - D$, then we are done. So we can restrict to the case $ r > \pi - D$.
Then we have
$$
\int_{0}^{r} \sin^{N-1}(t)\,dt \leq \int_{D-r}^{\pi-r} \sin^{N-1}(t)\,dt \leq \int_{D-r}^{\pi-r} (\pi -t )^{N-1}\,dt = \frac{r^{N}}{N} \left( \left(\frac{\pi - D}{r} + 1\right)^{N} - 1 \right).
$$
By convexity
$$
(1 + s)^{N} - 1 \leq s (2^{N}-1),
$$
provided $s \leq 1$, yielding
$$
\frac{N}{r^{N}}\int_{0}^{r} \sin^{N-1}(t)\,dt \leq (2^{N}-1) \frac{\pi -D}{r};
$$
noticing that $\inf_{ r \in (0,\pi)}\int_{0}^{r} \sin^{N-1}(t)dt/ r^{N}$ is strictly positive, gives the claim.
\end{proof}

The $\sfd$-monotonicity of $\Gamma$, i.e. for any $(x_{1},y_{1}),\dots, (x_{n},y_{n}) \in \Gamma$
$$
\sum_{i=1}^{n} \sfd(x_{i},y_{i}) \leq \sum_{i=1}^{n} \sfd(x_{i},y_{i + 1}), \qquad y_{n+1} = y_{1},
$$
for any $n \in \N$,  is crucial to obtain the next step.

\begin{lemma}\label{L:deficit-endpoints}
The following estimate holds true:
\begin{align*}
2^{N-1}&~ \left( \frac{1}{C_{N,v} C'_{N,v}} + C''_{N,v}\right)\delta(E)  \crcr
\geq  &~ \int_{Q_{\ell}}\Big(  \big(\pi  - \sfd(P_{S}(q),P_N(\bar q)) \big) +  \big(\pi - \sfd(P_S(\bar q),P_{N}(q) ) \big)\Big)^{N} \, \qq(dq) .
\end{align*}
\end{lemma}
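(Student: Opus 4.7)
The key is to use 2-cyclical monotonicity of $\Gamma$ on the two pairs of endpoints together with the control on $\pi - D_q$ that has already been established. Specifically, since $(P_S(q), P_N(q)) \in \Gamma$ and $(P_S(\bar q), P_N(\bar q)) \in \Gamma$, the 2-monotonicity inequality
$$
\sfd(P_S(q), P_N(q)) + \sfd(P_S(\bar q), P_N(\bar q)) \;\leq\; \sfd(P_S(q), P_N(\bar q)) + \sfd(P_S(\bar q), P_N(q))
$$
rewrites as $D_q + D_{\bar q} \leq \sfd(P_S(q), P_N(\bar q)) + \sfd(P_S(\bar q), P_N(q))$, and therefore
$$
\bigl(\pi - \sfd(P_S(q), P_N(\bar q))\bigr) + \bigl(\pi - \sfd(P_S(\bar q), P_N(q))\bigr) \;\leq\; (\pi - D_q) + (\pi - D_{\bar q}).
$$
This is the decisive step, converting the \emph{a priori} one-sided ray-length control into two-sided endpoint-distance control.

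Next I would raise both sides to the $N$-th power and apply the elementary convexity bound $(a+b)^N \leq 2^{N-1}(a^N + b^N)$, which yields the pointwise estimate
$$
\bigl((\pi - \sfd(P_S(q), P_N(\bar q))) + (\pi - \sfd(P_S(\bar q), P_N(q)))\bigr)^N \;\leq\; 2^{N-1}\bigl((\pi - D_q)^N + (\pi - D_{\bar q})^N\bigr).
$$
Integrating over $Q_\ell$ against $\qq$, the first term on the right is bounded by $\delta(E)/(C_{N,v} C'_{N,v})$ via Proposition \ref{P:Q2-2}, while the second term, after using $\qq(Q_\ell) \leq 1$ and the inequality $(\pi - D_{\bar q})^N \leq C''_{N,v}\,\delta(E)$ from Theorem \ref{T:Q2-3}, contributes at most $C''_{N,v}\,\delta(E)$. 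Summing these two contributions yields the claimed inequality
$$
2^{N-1}\Bigl( \tfrac{1}{C_{N,v} C'_{N,v}} + C''_{N,v}\Bigr)\delta(E) \;\geq\; \int_{Q_\ell} \bigl((\pi - \sfd(P_S(q),P_N(\bar q))) + (\pi - \sfd(P_S(\bar q), P_N(q)))\bigr)^N \, \qq(dq).
$$

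There is no real obstacle here: the proof is essentially a one-line application of 2-cyclical monotonicity followed by the convexity inequality and a substitution of the bounds already in hand. The only thing that needs minor care is the measurability of the integrand, which follows from the Borel regularity of the maps $P_S, P_N : \hat Q \to X$ recorded in Section \ref{Ss:L1OT}. Conceptually, the lemma is the moment where the global $d$-monotone structure of the Kantorovich potential is first used to tie the geometry of an arbitrary transport ray to the ``reference'' ray $X_{\bar q}$, setting the stage for identifying $P_S(\bar q)$ and $P_N(\bar q)$ as the candidate south/north poles of $X$.
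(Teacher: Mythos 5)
Your proof is correct and follows essentially the same route as the paper: two-point cyclical monotonicity of $\Gamma$ applied to $(P_S(q),P_N(q))$ and $(P_S(\bar q),P_N(\bar q))$, then the convexity bound $(a+b)^N\leq 2^{N-1}(a^N+b^N)$, then Proposition~\ref{P:Q2-2} and Theorem~\ref{T:Q2-3} to control the two resulting integrals. The only cosmetic difference is that the paper substitutes the bound $\pi-D_{\bar q}\leq (C''_{N,v}\delta(E))^{1/N}$ before raising to the $N$-th power, while you raise to the $N$-th power first and then bound $(\pi-D_{\bar q})^N$; the two orderings are equivalent.
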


\begin{proof}
The $\sfd$-monotonicity of the transport set implies that for any ray $X_q$ and any $x, y \in X_q$ with $(x,y) \in \Gamma$ it holds
$$
2\pi - \sfd (x,y)- \sfd(P_S(\bar q), P_N(\bar q))\geq 2\pi - \sfd(x,P_N(\bar q)) - \sfd(y, P_S(\bar q)).
$$
Which we can rewrite as
$$
\pi- \sfd(x,y)  +  \pi-D_{\bar{q}}   \geq  \pi  - \sfd(x,P_N(\bar q))+   \pi - \sfd(y, P_S(\bar q)).
$$
In particular if we take $x = P_{S}(q)$ and $y = P_{N}(q)$,
we deduce that
$$
\pi- D_{q}  +  \pi-D_{\bar{q}}   \geq \pi  - \sfd(P_{S}(q),P_N(\bar q)) +  \pi - \sfd(P_S(\bar q), P_{N}(q)).
$$
Then from Theorem \ref{T:Q2-3} it follows that

\begin{align*}
2^{N-1}\Big(( \pi- D_{q} )^{N} + C''_{N,v}\delta(E) \Big) \geq &~ \left( \pi- D_{q}  + (C''_{N,v}\delta(E))^{1/N}  \right)^{N}\crcr
\geq  &~ \Big( \pi  - \sfd(P_{S}(q),P_N(\bar q)) +  \pi - \sfd(P_S(\bar q),P_{N}(q) ) \Big)^{N}.
\end{align*}
Recalling \eqref{E:Q2-2}, we deduce that
\begin{align*}
2^{N-1} &\left( \frac{1}{C_{N,v} C'_{N,v}} + C''_{N,v}\right)\delta(E) \crcr
\geq &~\int_{Q_{\ell}}2^{N-1}\Big(( \pi- D_{q}  )^{N} + C''_{N,v}\delta(E) \Big) \,\qq(dq) \crcr
\geq &~ \int_{Q_{\ell}}\Big(  \big(\pi  - \sfd(P_{S}(q),P_N(\bar q)) \big) +  \big(\pi - \sfd(P_S(\bar q),P_{N}(q) ) \big)\Big)^{N} \, \qq(dq),
\end{align*}
proving the claim.
\end{proof}

\medskip

 For $\beta \in (0,1)$ to be fixed later,   it is then natural to consider the following sets of rays:
\begin{align}\label{eq:defQl1Ql2}
\begin{split}
Q_{\ell}^{1} &: = \{ q\in Q_{\ell} \colon \sfd(P_{S}(q), P_{N}(\bar q))\leq \pi -   \delta(E)^{\frac{\beta}{N}}  \}, \\
Q_{\ell}^{2} &: = \{ q\in Q_{\ell} \colon \sfd(P_{S}(\bar q), P_{N}(q))\leq \pi -   \delta(E)^{\frac{\beta}{N}}  \};
\end{split}
\end{align}
notice that both $Q_{\ell}^{1}$ and $Q_{\ell}^{2}$ are Borel sets.

\begin{lemma}
The following estimates hold true:
$$
\qq(Q_{\ell}^{1}) \leq C(N,v) \,  \delta(E)^{1-\beta}  ,\qquad \qq(Q_{\ell}^{2}) \leq C(N,v)  \,  \delta(E)^{1-\beta} .
$$
\end{lemma}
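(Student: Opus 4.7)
The plan is to apply a Chebyshev/Markov-type inequality directly to the integral bound provided by Lemma \ref{L:deficit-endpoints}. Observe that the integrand there is a sum of two non-negative quantities raised to the $N$-th power, which makes it easy to lower-bound on each $Q_\ell^i$ separately.

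More precisely, for any $q \in Q_\ell^1$, the definition \eqref{eq:defQl1Ql2} gives $\pi - \sfd(P_S(q), P_N(\bar q)) \geq \delta(E)^{\beta/N}$, and the other term $\pi - \sfd(P_S(\bar q), P_N(q))$ is non-negative by the diameter bound $\diam(X) \leq \pi$. Hence on $Q_\ell^1$ the integrand is bounded below by $\delta(E)^{\beta}$. Feeding this into Lemma \ref{L:deficit-endpoints} yields
\[
\delta(E)^{\beta}\, \qq(Q_\ell^1) \leq \int_{Q_\ell^1}\Big(\big(\pi - \sfd(P_S(q), P_N(\bar q))\big) + \big(\pi - \sfd(P_S(\bar q), P_N(q))\big)\Big)^{N}\,\qq(dq) \leq C(N,v)\,\delta(E),
\]
and dividing through by $\delta(E)^{\beta}$ gives the first estimate. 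The argument for $Q_\ell^2$ is entirely symmetric, using instead the lower bound $\pi - \sfd(P_S(\bar q), P_N(q)) \geq \delta(E)^{\beta/N}$ on $Q_\ell^2$.

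There is no real obstacle here; the lemma is essentially a Chebyshev inequality applied to the integral inequality of Lemma \ref{L:deficit-endpoints}. The only minor point to double-check is that the constant $C(N,v)$ that appears can be taken to depend only on $N$ and $v$, which is clear since it comes from the explicit combination $2^{N-1}\big(1/(C_{N,v} C'_{N,v}) + C''_{N,v}\big)$ already isolated in Lemma \ref{L:deficit-endpoints}. The real substance of the structural analysis of the transport set was carried out earlier, in Proposition \ref{P:Q2-2}, Theorem \ref{T:Q2-3}, and especially Lemma \ref{L:deficit-endpoints} via the $\sfd$-cyclical monotonicity of $\Gamma$ — this final lemma is just the pigeonhole extraction from that.
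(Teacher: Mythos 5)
Your proof is correct and is exactly the paper's argument: restrict the integral in Lemma \ref{L:deficit-endpoints} to $Q_\ell^i$, bound the integrand from below by $\delta(E)^{\beta}$ using the defining inequality of $Q_\ell^i$ and the fact that the other summand is non-negative (since $\diam(X)\leq\pi$), and divide. The constant identification is also the same as in the paper.
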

\begin{proof}
From Lemma \ref{L:deficit-endpoints} we deduce that
\begin{align*}
2^{N-1}&~ \left( \frac{1}{C_{N,v} C'_{N,v}} + C''_{N,v}\right)\delta(E)  \crcr
\geq  &~ \int_{Q_{\ell}}\Big(  \big(\pi  - \sfd(P_{S}(q),P_N(\bar q)) \big) +  \big(\pi - \sfd(P_S(\bar q),P_{N}(q) ) \big)\Big)^{N} \, \qq(dq) \crcr
\geq  &~  \qq(Q_{\ell}^{i})  \,  \delta(E)^{\beta} ,
\end{align*}
for $i = 1,2$, proving the claim.
\end{proof}

We can therefore restrict our analysis to the following family of rays:
\begin{equation}\label{D:goodQ}
Q_{\ell}^{g} : = Q_{\ell} \setminus (Q_{\ell}^{1} \cup Q_{\ell}^{2}),
\end{equation}
where $g$ stands for $good$. Clearly $Q_{\ell}^{g}$ is Borel and $\qq(Q_{\ell}^{g}) \geq  1 - C(N,v)\,   \delta(E)^{1-\beta} $.

We can now prove that also distances between initial and final points are controlled by the deficit, provided $q$ belongs to the set of ``good'' rays.
Proposition \ref{P:antipodal} will be now used in a crucial way.

\begin{corollary}\label{C:positionSN}
There exists a strictly positive constant $C(N,v)$ only depending on $N$ and $v \in (0,1)$ such that
\begin{align*}
\sfd(P_{S}(q), P_{S}(\bar q)), \sfd(P_{N}(q), P_{N}(\bar q)) \leq C(N,v)\,   \delta(E)^{\frac{\beta}{N}} ,
\end{align*}
for each $q \in Q_{\ell}^{g}$.
\end{corollary}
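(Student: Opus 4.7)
The plan is to derive the corollary directly from the antipodal-diameter estimate of Proposition \ref{P:antipodal}, combined with the defining properties of $Q_\ell^g$ and the diameter bound from Theorem \ref{T:Q2-3}. I would set $D := \min\{D_{\bar q},\, \pi - \delta(E)^{\beta/N}\}$, so that $\pi - D = \max\{\pi - D_{\bar q},\, \delta(E)^{\beta/N}\}$. Since Theorem \ref{T:Q2-3} gives $\pi - D_{\bar q} \leq C(N,v)\,\delta(E)^{1/N}$ and $\beta \in (0,1)$ ensures $\delta(E)^{1/N} \leq \delta(E)^{\beta/N}$ whenever $\delta(E) \leq 1$, one obtains $\pi - D \leq C(N,v)\,\delta(E)^{\beta/N}$. (If $\delta(E)$ exceeds a dimensional threshold, distances in $X$ are bounded by $\pi$ and the corollary is trivial up to enlarging the constant.)

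To bound $\sfd(P_S(q), P_S(\bar q))$, I would apply the inclusion \eqref{eq:BpiepsBepsCD} with $x_1 = P_N(\bar q)$ and radius $D$, producing some $x_2 \in X$ with $X \setminus B_D(P_N(\bar q)) \subset B_{C_N(\pi - D)}(x_2)$. By definition $\sfd(P_S(\bar q), P_N(\bar q)) = D_{\bar q} \geq D$, hence $P_S(\bar q) \notin B_D(P_N(\bar q))$; and since $q \in Q_\ell^g \subset Q_\ell \setminus Q_\ell^1$, the defining inequality of $Q_\ell^1$ in \eqref{eq:defQl1Ql2} gives $\sfd(P_S(q), P_N(\bar q)) > \pi - \delta(E)^{\beta/N} \geq D$, so $P_S(q) \notin B_D(P_N(\bar q))$ either. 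Both points then lie in $B_{C_N(\pi - D)}(x_2)$, and the triangle inequality yields
\[
\sfd(P_S(q), P_S(\bar q)) \leq 2 C_N (\pi - D) \leq C(N,v)\,\delta(E)^{\beta/N}.
\]

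The bound on $\sfd(P_N(q), P_N(\bar q))$ will follow by the symmetric argument, applying Proposition \ref{P:antipodal} with $x_1 = P_S(\bar q)$ and the same radius $D$: the condition $q \notin Q_\ell^2$ places both $P_N(\bar q)$ and $P_N(q)$ outside $B_D(P_S(\bar q))$, so they lie in a common ball of radius $C_N(\pi - D)$. The substantive content is therefore concentrated in Proposition \ref{P:antipodal}, which upgrades the weak information extracted from the $\sfd$-cyclical monotonicity of $\Gamma$ in Lemma \ref{L:deficit-endpoints} (namely that the ``cross-distances'' $\sfd(P_S(q), P_N(\bar q))$ and $\sfd(P_S(\bar q), P_N(q))$ are close to $\pi$ for $q \in Q_\ell^g$) into genuine metric proximity of the south and north poles themselves; beyond this, only the triangle inequality is needed.
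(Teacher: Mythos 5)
Your proof is correct and follows essentially the same route as the paper: the defining inequalities of $Q_\ell^g$ plus the diameter estimate from Theorem \ref{T:Q2-3} furnish the hypothesis of Proposition \ref{P:antipodal}, which then delivers the metric proximity. The only cosmetic difference is that you route through the ball-inclusion form \eqref{eq:BpiepsBepsCD} and a triangle inequality (picking up a harmless factor of $2$), whereas the paper applies the three-point statement of Proposition \ref{P:antipodal} directly to $(P_N(\bar q), P_S(q), P_S(\bar q))$ and to $(P_S(\bar q), P_N(q), P_N(\bar q))$; your explicit choice of $D = \min\{D_{\bar q},\, \pi - \delta(E)^{\beta/N}\}$ and the parenthetical reduction to $\delta(E) \leq 1$ are both sound.
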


\begin{proof}
By definition for each $q \in Q_{\ell}^{g}$
$$
 \delta(E)^{\frac{\beta}{N}}  > \pi - \sfd(P_{S}(q), P_{N}(\bar q)), \qquad  \delta(E)^{\frac{\beta}{N}} > \pi - \sfd(P_{S}(\bar q), P_{N}( q)).
$$
Moreover from Theorem \ref{T:Q2-3} we have that $C_{N,v}'' \delta(E)^{\frac{1}{N}} \geq  \pi -\sfd(P_{S}(\bar q), P_{N}(\bar q))$.
Hence Proposition \ref{P:antipodal} implies that
$$
C_{N} C_{N,v}''  \delta(E)^{\frac{\beta}{N}}   \geq \sfd(P_{S}(q), P_{S}(\bar q)), \qquad C_{N} C_{N,v}''   \delta(E)^{\frac{\beta}{N}}  \geq \sfd(P_{N}(q), P_{N}(\bar q)),
$$
proving the claim.
\end{proof}

We summarize all the properties obtained so far for the set of good rays $Q_{\ell}^{g}$:
\begin{align}
& \qq(Q_{\ell}^{g}) \geq 1 -  C(N,v)\,  \delta(E)^{1-\beta} ;  \label{eq:qqQlg}  \\
& \text{for each } q \in Q_{\ell}^{g}:\,  \sfd(P_{S}(q), P_{S}(\bar q)), \sfd(P_{N}(q), P_{N}(\bar q)) \leq C(N,v) \,  \delta(E)^{\frac{\beta}{N}}  ; \\
& \text{for each } q \in Q_{\ell}^{g}:\,  D_{q} = \sfd(P_{S}(q), P_{N}( q)) \geq \pi - C(N,v) \,  \delta(E)^{\frac{\beta}{N}} ;
\end{align}
where $C(N,v)> 0$ is a positive constant depending only on $N > 1$ and $v \in (0,1)$, and $\bar q \in Q$ is the distinguished ray from Theorem \ref{T:Q2-3}.

\section{Quantitative isoperimetric inequality}\label{S:QII}

We then want to separate the rays such
that $E_{q} : = E \cap X_{q}$ has optimal competitor staying in the south pole from the ones having it at the north pole.
 For $\gamma\in (0,1)$ to be chosen later,   we therefore continue considering the following subsets of rays:
\begin{align*}
Q_{\ell}^{S} : = &~ \left\{ q \in Q_{\ell}^{g} \colon \mm_{q}( E_{q} \Delta [0,r^{-}_{q}] ) \leq \,   \delta(E)^{\gamma}   \right\}, \crcr
Q_{\ell}^{N} : = &~ \left\{ q \in Q_{\ell}^{g} \colon \mm_{q}( E_{q} \Delta [r^{+}_{q}, D_{q}] ) \leq   \,   \delta(E)^{\gamma}   \right\},
\end{align*}
where $D_{q} = |X_{q}|$ and $r_{q}^{\pm}:= r_{h_{q}}^{\pm}(v) \in (0,D_{q})$, with $\mm_{q} = h_{q}\mathcal{L}^{1}$,
are the unique points such that
$$
v= \int_{0}^{r_{h_{q}}^{-}(v)} h_{q}(t)\,dt = v = \int_{r_{h_{q}}^{+}(v)}^{D_{q}} h_{q}(t)\,dt.
$$
We will show that at least one of the previous set of rays must have small measure.

First we need to prove the $Q_{\ell}^{S},Q_{\ell}^{N}$ are measurable; we start with the following measurability result

\begin{lemma}
For any $v \in [0,1]$, the maps
$$
Q \ni q  \mapsto r^{\pm}_{h_{q}}(v)
$$
are Borel.
\end{lemma}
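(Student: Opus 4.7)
The plan is to exhibit $r^{\pm}_{h_q}(v)$ as generalized inverses, in the $r$-variable, of a jointly Borel cumulative distribution function, and to invoke the standard measurability principle for such inverses.

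First, recall from the summary at the end of Section \ref{Ss:L1OT} that the density $h:\dom(h)\subset Q\times\R\to[0,\infty)$ is jointly Borel measurable; extending $h_q(t):=0$ for $t\notin[0,D_q]$ preserves joint Borel measurability, because $q\mapsto D_q=\sfd(P_S(q),P_N(q))$ is itself Borel. Define the cumulative distribution function
\[
F(q,r):=\int_0^{r}h_q(t)\,dt,\qquad (q,r)\in Q\times\R.
\]
The nonnegative function $(q,t,r)\mapsto h_q(t)\,\ind_{[0,r]}(t)$ is jointly Borel on $Q\times\R\times\R$, so Fubini--Tonelli yields that $F$ is jointly Borel in $(q,r)$. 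For each fixed $q$, the map $r\mapsto F(q,r)$ is continuous, satisfies $F(q,0)=0$ and $F(q,D_q)=1$, and is strictly increasing on $[0,D_q]$, because $\CD(N-1,N)$ densities are log-concave and hence strictly positive on the interior of their support (cf.\ the discussion opening Section \ref{S:onedstab}).

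Second, the strict monotonicity of $F(q,\cdot)$ on $[0,D_q]$, together with the defining relation $F(q,r^{-}_{h_q}(v))=v$ and the natural convention $F(q,s)=1$ for $s\geq D_q$ (resp.\ $F(q,s)=0$ for $s\leq 0$), gives for every $s\in\R$ the identity
\[
\{q\in Q:r^{-}_{h_q}(v)\leq s\}\,=\,\{q\in Q:F(q,s)\geq v\}.
\]
The right-hand side is the preimage, under the Borel map $q\mapsto F(q,s)$, of the Borel set $[v,+\infty)$, hence Borel. As $s$ ranges over $\R$ this exhibits the preimages of a generating family of half-lines as Borel sets, proving that $q\mapsto r^{-}_{h_q}(v)$ is Borel for every $v\in[0,1]$.

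Third, the symmetric claim for $r^{+}_{h_q}(v)$ reduces to the previous one via the identity
\[
r^{+}_{h_q}(v)=r^{-}_{h_q}(1-v),
\]
which is immediate from the definitions, since $\int_0^{r^{+}_{h_q}(v)}h_q\,dt+\int_{r^{+}_{h_q}(v)}^{D_q}h_q\,dt=1$ forces $F(q,r^{+}_{h_q}(v))=1-v$.

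The only genuine issue is securing the joint Borel measurability of $F$, which is where the previously established joint measurability of the density $h$ and the Borel character of $q\mapsto D_q$ are really used; once $F$ is jointly Borel, the generalized-inverse identity above is a classical fact and carries the Borel measurability from $F$ to $r^{\pm}_{h_q}(v)$ with no further effort.
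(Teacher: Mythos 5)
Your proof is correct, and it follows the paper's strategy in its main step: both arguments reduce the claim to the joint Borel measurability of the cumulative distribution function $F(q,r)=\int_0^r h_q(t)\,dt$, obtained from the joint measurability of $(q,t)\mapsto h_q(t)$ via Fubini. The two proofs diverge only in the final step. The paper observes that the graph $\gr(r^-)=\{(q,r):F(q,r)=v\}$ is a Borel subset of $Q\times\R$ and then invokes the classical fact that a function between standard Borel spaces is Borel measurable if and only if its graph is Borel. You instead exploit the monotonicity of $F(q,\cdot)$ to write each sublevel set $\{q:r^-_{h_q}(v)\leq s\}$ as a superlevel set $\{q:F(q,s)\geq v\}$ of the slice map $q\mapsto F(q,s)$, which is a more elementary conclusion requiring no descriptive-set-theoretic machinery beyond the definition of Borel measurability. (A small caveat: your sublevel identity does not literally hold for the degenerate value $v=0$ when $s<0$, but in that case $r^-_{h_q}(0)\equiv 0$ is trivially Borel; for $v\in(0,1]$ the strict positivity of $h_q$ on $(0,D_q)$ makes the identity exact.) You also establish joint Borel measurability of $F$ slightly differently — by viewing $(q,t,r)\mapsto h_q(t)\ind_{[0,r]}(t)$ as a Borel map and applying Tonelli directly, whereas the paper first fixes $r$ and then invokes continuity in $r$ — but both routes are standard and equally valid. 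Your reduction $r^+_{h_q}(v)=r^-_{h_q}(1-v)$ is clean and matches the symmetric treatment in the paper.
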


\begin{proof}
First we recall  from Section \ref{Ss:L1OT} that the density $(q,t) \mapsto h_{q}(t)$ is Borel.
Then from Fubini's Theorem, for each $r \in \R$,
$$
Q \ni q \mapsto \int_{0}^{r} h_{q}(t)\,dt
$$
is Borel; since for each $q \in Q$, $r \to \int_{0}^{r} h_{q}(t)\,dt$ is continuous, it follows that $Q \times \R \ni (q,r) \to \int_{0}^{r} h_{q}(t)\,dt$ is Borel.
Then
$$
\gr(r^{-}) = \left\{ (q,r) \in Q \times \R \colon v = \int_{0}^{r} h_{q}(t)\,dt \right\},
$$
yields that $\gr(r^{-})$ is Borel and the claim follows. The same holds true for $r^{+}$.
\end{proof}

Then we can conclude as follows that $Q_{\ell}^{S},Q_{\ell}^{N}$ are measurable: from the proof  of the previous Lemma, we get  that for each $v \in (0,1)$
$$
\Lambda : =  g( \{ (q,t) \in Q \times [0,\infty) \colon t \leq r_{q}^{-}(v) \} )
$$
is an analytic set ($\gr(r^{-})$ is Borel);  since
$$
\mm_{q}(E_{q} \Delta [0,r_{q}^{-}(v)] ) =  \mm_{q} (E \Delta \Lambda),
$$
from the measurability of the disintegration, it follows that $Q_{\ell}^{S}$ is $\qq$-measurable. The same holds for $Q_{\ell}^{N}$; possibly passing
to subsets with same $\qq$-measure, we can assume both of them to be Borel.

\medskip

We now pass to analyze $Q_{\ell}^{S}$ and $Q_{\ell}^{N}$.

 We first show the next lemma.

\begin{lemma}\label{L:rayNS}
Using the notation above it holds
\begin{equation}\label{eq:QgsmQNS}
\qq(Q^{g}_{\ell}\setminus (Q^{N}_{\ell} \cup Q^{S}_{\ell})) \leq \frac{1}{C(N,v) \,} \delta^{1-\gamma}(E).
\end{equation}
\end{lemma}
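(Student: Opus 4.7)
\medskip
\noindent\emph{Proof plan.} The strategy is to apply the one-dimensional quantitative isoperimetric bound of Proposition \ref{P:1dquant} ray by ray on $Q_\ell^g$, and then integrate against the decomposition inequality \eqref{E:starting}. We may assume $\delta(E)$ to be smaller than a threshold depending only on $N$, $v$ and $\beta$, since otherwise \eqref{eq:QgsmQNS} is trivial from $\qq \leq 1$ (after shrinking the constant $C(N,v)$ if needed).

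For each $q \in Q_\ell^g$ the disintegration presents the ray $(X_q,\sfd\llcorner_{X_q},\mm_q)$ as a one-dimensional $\CD(N-1,N)$ probability space, which after the parametrization of Section \ref{Ss:L1OT} reads $([0,D_q],|\cdot|,h_q\,\L^1)$ with $h_q$ a $\CD(N-1,N)$ density integrating to $1$. The constraint localized by Theorem \ref{T:localize} forces $\mm_q(E_q)=v$. Moreover, by definition of $Q_\ell^g$ combined with Corollary \ref{C:positionSN} and Theorem \ref{T:Q2-3}, one has
\[
\pi - D_q \leq C(N,v)\,\delta(E)^{\beta/N}\qquad\forall\, q\in Q_\ell^g.
\]
Under our smallness assumption on $\delta(E)$, we have $\pi-D_q<\ve(N,v)$ uniformly in $q\in Q_\ell^g$, so Proposition \ref{P:1dquant} applies to each ray with a constant depending only on $N$ and $v$:
\[
\PP_q(E_q) - \I_{h_q}(v) \geq C(N,v)\,\min\big\{\mm_q(E_q\,\Delta\,[0,r_q^-]),\;\mm_q(E_q\,\Delta\,[r_q^+,D_q])\big\}.
\]

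If $q\in Q_\ell^g\setminus(Q_\ell^S\cup Q_\ell^N)$, both terms inside the minimum exceed $\delta(E)^\gamma$, hence $\PP_q(E_q)-\I_{h_q}(v)\geq C(N,v)\,\delta(E)^\gamma$. Since Theorem \ref{theorem:LGM} applied to the one-dimensional space $(X_q,\sfd,\mm_q)$ yields $\I_{h_q}(v)\geq \I_{D_q}(v)\geq \I_{\pi}(v)$ (the model profile $\I_D$ is non-increasing in $D$ on $[0,\pi]$), the integrand in \eqref{E:starting} is non-negative on all of $Q$ and bounded below by $C(N,v)\,\delta(E)^\gamma$ on the set under consideration. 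Thus
\[
\delta(E)\;\geq\;\int_{Q_\ell^g\setminus(Q_\ell^S\cup Q_\ell^N)} \bigl[\PP_q(E_q)-\I_\pi(v)\bigr]\,\qq(dq)\;\geq\; C(N,v)\,\delta(E)^\gamma\,\qq\bigl(Q_\ell^g\setminus(Q_\ell^S\cup Q_\ell^N)\bigr),
\]
and rearranging gives \eqref{eq:QgsmQNS}.

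The only potential obstacle is verifying that Proposition \ref{P:1dquant} can be applied with a constant independent of $q$: this is ensured because the constant in that proposition depends only on $N$ and $v$, and the uniform closeness $D_q\to\pi$ on $Q_\ell^g$ lets the smallness threshold be selected globally. All measurability of the relevant sets and maps (e.g.\ $q\mapsto h_q$, $q\mapsto r_q^\pm$, $q\mapsto \PP_q(E_q)$) has already been established in Sections \ref{Ss:L1OT} and \ref{s:reductionto1d}, so the integration step is justified.
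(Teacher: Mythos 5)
Your proof is correct and follows essentially the same route as the paper's: apply Proposition \ref{P:1dquant} fiberwise, note that on $Q_\ell^g\setminus(Q_\ell^S\cup Q_\ell^N)$ both terms in the minimum exceed $\delta(E)^\gamma$, and integrate via \eqref{E:starting} after using $\I_{h_q}(v)\geq\I_\pi(v)$ to keep the integrand non-negative. The only difference is that you make explicit the justification that the smallness hypothesis $\pi-D_q<\ve(N,v)$ of Proposition \ref{P:1dquant} holds uniformly on $Q_\ell^g$ via the bound $\pi-D_q\leq C(N,v)\,\delta(E)^{\beta/N}$; the paper compresses this into the phrase ``for $\delta(E)$ sufficiently small.''
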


\begin{proof}
From Proposition \ref{P:1dquant} we know that for $\delta(E)$ sufficiently small
$$
\PP_{h_{q}}(E_{q}) - \I_{h_{q}}(v) \geq C(N,v) \min\{ \mm_{q}( E_{q} \Delta [0,r_{q}^{-}(v)]), \mm_{q}( E_{q} \Delta [r_{q}^{+}(v),D_{q}])    \}.
$$
We infer
\begin{align*}
\delta(E) &= \PP(E)-\cI_{\pi}(v) \geq \int_{Q} \left(\PP_{h_{q}}(E_{q}) -\cI_{\pi}(v) \right) \, \qq(dq)  \\
& \geq C(N,v) \, \int_{Q_{\ell}^{g} \setminus (Q^{N}_{\ell} \cup Q^{S}_{\ell})}   \min\{ \mm_{q}( E_{q} \Delta [0,r_{q}^{-}(v)]), \mm_{q}( E_{q} \Delta [r_{q}^{+}(v),D_{q}])    \} \, \qq(dq)  \\
& \geq  C(N,v) \,  \delta(E)^{\gamma} \, \qq( Q_{\ell}^{g} \setminus (Q^{N}_{\ell} \cup Q^{S}_{\ell})),
\end{align*}
giving the claim.
\end{proof}

For reader's convenience we include here an easy one-dimensional result.

\begin{lemma}\label{L:easy}
Let $f : X \to [0,1]$ be a Borel function such that $\int f(x) \xi(dx) = c >0$, with $\xi$ positive finite Borel measure.
Then
$$
\xi \left( \{ x \in X \colon f(x) \geq a \} \right) \geq  \frac{c - aK}{1-a},
$$
where  $K = \xi(X)$.
\end{lemma}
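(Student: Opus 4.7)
The plan is to prove this by a direct Chebyshev/Markov-type argument: split the integral $\int f\,d\xi$ over the two complementary sets $\{f \geq a\}$ and $\{f < a\}$ and bound each piece by a constant multiple of $\xi$-measure.

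Concretely, let $A := \{x \in X : f(x) \geq a\}$ and write
\[
c = \int_X f\,d\xi = \int_A f\,d\xi + \int_{X \setminus A} f\,d\xi.
\]
On $A$, the pointwise bound $f(x) \leq 1$ (from the hypothesis $f : X \to [0,1]$) gives $\int_A f\,d\xi \leq \xi(A)$. On $X \setminus A$, the defining inequality $f(x) < a$ gives $\int_{X \setminus A} f\,d\xi \leq a\,\xi(X \setminus A) = a(K - \xi(A))$. Combining the two bounds,
\[
c \leq \xi(A) + a\bigl(K - \xi(A)\bigr) = (1-a)\,\xi(A) + aK.
\]
Solving for $\xi(A)$ (noting $1-a > 0$, which we may assume since for $a = 1$ the statement is vacuous) yields $\xi(A) \geq (c - aK)/(1-a)$, which is the desired estimate.

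I do not foresee any obstacle: the argument is a one-line splitting of the integral and uses only the pointwise bound $0 \leq f \leq 1$ and finiteness of $\xi$. One should just record that the conclusion is meaningful only in the regime $a < c/K$ (otherwise the right-hand side is non-positive and the inequality is trivial since $\xi(A) \geq 0$), but no additional hypothesis is needed.
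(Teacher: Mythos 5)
Your proof is correct and is essentially identical to the paper's: both split $\int f\,d\xi$ over $\{f\geq a\}$ and $\{f<a\}$, bound $f\leq 1$ on the first set and $f<a$ on the second, and solve the resulting linear inequality for $\xi(\{f\geq a\})$. The extra remarks about $a<1$ and the triviality of the bound when $a\geq c/K$ are sensible observations but not a departure from the paper's argument.
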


\begin{proof}
Just note that
\begin{align*}
c & = ~ \int_{\{ f \geq a\}} f \xi + \int_{\{ f < a\}} f \xi \leq  \xi(\{ f \geq a\}) + a \xi(\{ f < a\})  \\
&=  \xi(\{ f \geq a\}) + a (K- \xi(\{ f \geq a\})) \\
&=  \xi(\{ f \geq a\})(1 - a) + a K,
\end{align*}
and the claim follows.
\end{proof}

\begin{proposition}\label{P:nointerface}
For any $C >0$,  and any $\alpha >0$ such that
$$
\alpha <   \frac{N}{2N-1} \; \min\{\gamma, 1-\gamma, 1-\beta  \} ,
$$
there exists $\bar \delta > 0$ such that, whenever $\delta(E)< \bar \delta$, then the following inequality holds
$$
\min \{ \qq(Q_{\ell}^{S}) , \qq(Q_{\ell}^{N})\} \leq C \, \delta(E)^{\alpha}.
$$
\end{proposition}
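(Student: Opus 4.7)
I will argue by contradiction, using an \emph{additional} $L^{1}$-localization to detect extra perimeter generated by a global ``two-caps'' shape of $E$. By replacing $E$ with $E^{c}$ if necessary, assume $v \leq 1/2$, and suppose that both $\qq(Q_{\ell}^{S}) > C\,\delta(E)^{\alpha}$ and $\qq(Q_{\ell}^{N}) > C\,\delta(E)^{\alpha}$. Set
\[
A := \bigcup_{q \in Q_{\ell}^{S}}(E \cap X_{q}), \qquad B := \bigcup_{q \in Q_{\ell}^{N}}(E \cap X_{q}).
\]
From the definitions of $Q_{\ell}^{S,N}$ combined with Corollary \ref{C:positionSN}, up to $\mm$-negligible remainders of order $\delta(E)^{\gamma}$ the set $A$ is contained in a neighborhood $U_{S}$ of $P_{S}(\bar q)$ of radius $r_{N}(v)+O(\delta(E)^{\beta/N})$, and $B$ in an analogous neighborhood $U_{N}$ of $P_{N}(\bar q)$. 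For $v\leq 1/2$ these are separated by a definite gap $\pi - 2r_{N}(v)-O(\delta(E)^{\beta/N}) > 0$, and $\mm(A), \mm(B) \gtrsim \delta(E)^{\alpha}$.

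The core step is to apply Theorem \ref{T:localize} to the zero-mean, $\mm$-integrable function $\tilde f := \mm(A)^{-1}\chi_{A} - \mm(B)^{-1}\chi_{B}$, producing a \emph{secondary} disintegration $\mm\llcorner_{\tilde\T} = \int_{P}\mm_{p}\,\qq'(dp)$ along geodesics $\{Y_{p}\}_{p\in P}$ carrying $\CD(N-1,N)$ densities $h_{p}$ with $\int_{Y_{p}}\tilde f\,\mm_{p}=0$. The zero-mean constraint forces every charged $Y_{p}$ to meet both $A$ and $B$; since $U_{S}$ and $U_{N}$ are separated, such $Y_{p}$ have length close to $\pi$ and $A\cap Y_{p}$, $B\cap Y_{p}$ are confined to opposite endpoint regions. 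The ``middle'' portion of $Y_{p}$ between these pieces is, up to an aggregate $\mm$-error of order $\delta(E)^{1-\beta}+\delta(E)^{1-\gamma}$ (from \eqref{E:deficit-short}, Lemma \ref{L:rayNS} and the bounds on $\qq(Q_{\ell}^{1,2})$), disjoint from $E$.

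On $\qq'$-a.e.\ charged $Y_{p}$ the trace $E\cap Y_{p}$ is therefore essentially a \emph{two-bumps} configuration. A one-dimensional analysis analogous to Proposition \ref{P:1dquant}, adapted to configurations with two far-apart components, shows that its perimeter exceeds the single-interval minimum $\cI_{h_{p}}(\mm_{p}(E\cap Y_{p}))$ by an amount $\gtrsim \min\{\mm_{p}(A\cap Y_{p}), \mm_{p}(B\cap Y_{p})\}^{(N-1)/N}$ (the exponent emerging from the $\sin^{N-1}$-type vanishing of $\CD(N-1,N)$ densities near the endpoints). Using the disintegration bound $\PP(E) \geq \int_{P}\PP_{h_{p}}(E\cap Y_{p})\,\qq'(dp)$ together with the budgets $\int_{P}\mm_{p}(A)\,\qq'(dp) = \mm(A) \gtrsim \delta(E)^{\alpha}$ and $\int_{P}\mm_{p}(B)\,\qq'(dp) = \mm(B) \gtrsim \delta(E)^{\alpha}$, the a priori bound $\mm_{p}(A\cap Y_{p})\leq 1$, and a H\"older-type lower estimate for the concave power $u \mapsto u^{(N-1)/N}$, one arrives at
\[
\delta(E) \;\geq\; c_{N,v}\,\delta(E)^{\alpha(2N-1)/N} \;-\; C\bigl(\delta(E)^{\gamma}+\delta(E)^{1-\gamma}+\delta(E)^{1-\beta}\bigr).
\]
The hypothesis $\alpha(2N-1)/N < \min\{\gamma, 1-\gamma, 1-\beta\} \leq 1$ makes the main term on the right dominate the left-hand side, a contradiction for $\delta(E)\leq\bar\delta$ small enough.

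The main obstacle I foresee lies in the third paragraph: quantifying uniformly in $p$ the excess $\gtrsim u^{(N-1)/N}$ of the two-bumps configuration on a $\CD(N-1,N)$ density of length close to $\pi$, and then extracting through the integration the sharp exponent $N/(2N-1)$. The per-fiber cost $u^{(N-1)/N}$ must be balanced, via H\"older, against both the budget $\int u\,\qq'(dp) \gtrsim \delta(E)^{\alpha}$ and the probability bound $u\leq 1$; it is precisely this balance that forces the factor $N/(2N-1)$, and with it the specific tension with the fiber-wise scales $\delta(E)^{\gamma}, \delta(E)^{1-\gamma}, \delta(E)^{1-\beta}$ introduced in the previous sections.
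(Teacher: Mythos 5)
Your overall strategy --- argue by contradiction and run a \emph{second} $L^{1}$-localization to detect extra perimeter --- is in the same family as the paper's, and your final inequality has the right shape. But the paper's secondary localization is fundamentally \emph{local}, while yours is \emph{global}, and this difference is where the gaps appear.

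The paper does not localize $\mm(A)^{-1}\chi_A-\mm(B)^{-1}\chi_B$ over the whole of $X$. Instead it fixes a small ball $B^{S}_{r}$ around $P_{S}(\bar q)$, shows that $\mm(E\cap B^{S}_{r})$ and $\mm(B^{S}_{r}\setminus E)$ are both $\gtrsim \delta(E)^{\alpha}$, and localizes $f=\chi_{E\cap B^{S}_{r}}/\mm(E\cap B^{S}_{r})-\chi_{B^{S}_{r}\setminus E}/\mm(B^{S}_{r}\setminus E)$. Using the construction of \cite{CM4} the resulting rays can be confined to $B^{S}_{3r}$. The perimeter is then split as
\[
\PP(E)\geq \PP(E,B^{S}_{3r})+\PP(E,X\setminus\overline{B^{S}_{3r+\ve}}),
\]
and the two pieces are estimated by completely different means: the second piece, via the \emph{original} rays $X_q$ with $q\in Q_{\ell}^{S}\cup Q_{\ell}^{N}$, supplies essentially the full budget $\cI_{\pi}(v)$; the first piece, via the secondary rays, supplies the strictly positive excess $\gtrsim\delta(E)^{\alpha(2N-1)/N}$ because on a positive $\qq^{1}$-measure $\gtrsim\delta(E)^{\alpha}$ of fibers the mass $\mm^{1}_{q}(E)$ is pinned into $[c\delta^{\alpha},1-c\delta^{\alpha}]$, whence $\cI_{\pi}(\mm^{1}_{q}(E))\gtrsim\delta^{\alpha(N-1)/N}$. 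No fine one-dimensional stability result is needed inside the ball; only the Levy-Gromov bound $\PP_{h_{p}}\geq\cI_{\pi}$ on each fiber.

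Your version has two genuine gaps. First, the geometric claim that the secondary rays $Y_{p}$ ``have length close to $\pi$'' is not justified: a ray meeting both $A\subset U_{S}$ and $B\subset U_{N}$ has length at least $\sfd(U_{S},U_{N})\approx\pi-2r_{N}(v)$, which can be arbitrarily small when $v$ is close to $1/2$. Second, and more seriously, the per-fiber lower bound $\PP_{h_{p}}(E\cap Y_{p})-\cI_{h_{p}}\gtrsim\min\{\mm_{p}(A\cap Y_{p}),\mm_{p}(B\cap Y_{p})\}^{(N-1)/N}$ is asserted but not proved; you flag it yourself as the main obstacle. The $u^{(N-1)/N}$ heuristic comes from the $\sin^{N-1}$ vanishing of the model density at the endpoints, but a generic $\CD(N-1,N)$ density on a fiber of length $D_{p}<\pi$ is bounded \emph{below} by $\sim(\pi-D_{p})^{N-1}$ at the endpoints (Proposition \ref{P:estimatedensity}), so unless $D_{p}\to\pi$ the power-law excess does not follow. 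A further complication is that the sets $A\cap Y_{p}$, $B\cap Y_{p}$ need not sit at the endpoints of $Y_{p}$, nor be intervals, and the ``middle'' of $Y_{p}$ can carry $E$-mass that is only controlled in aggregate, not fiber-by-fiber. The paper sidesteps all of this by confining the secondary rays to a small ball where such questions never arise.

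If you want to salvage your route, the first order of business is to prove the one-dimensional two-bumps estimate for $\CD(N-1,N)$ densities on intervals whose length is only bounded below, not close to $\pi$; absent that, the argument does not close. The paper's local decomposition is precisely the device that makes such an estimate unnecessary.
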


\begin{proof}

Suppose by contradiction the claim was false:
$$
\qq(Q_{\ell}^{S}) , \qq(Q_{\ell}^{N})> C \, \delta(E)^{\alpha},
$$
with $\alpha$ verifying the inequality of the statement; then we argue as follows.

Consider the set
$$
E_{S} : = \bigcup_{q\in Q_{\ell}^{S}} E_{q}, \qquad  E_{N} : = \bigcup_{q\in Q_{\ell}^{N}} E_{q},
\qquad E^{b} : = \bigcup_{q \in Q_{s}\cup Q_{\ell}^{b}} E_{q},
$$
 where $Q_{\ell}^{b}:=Q_{\ell}^{1}\cup Q_{\ell}^{2}=Q_{\ell} \setminus Q_{\ell}^{g}$; notice that $E^{b}$ coincide up to a set of $\mm$-measure zero with $E \setminus E_{S} \cup E_{N}$.
We will accordingly decompose the perimeter of $E$ and eventually find a contradiction for small deficit. \medskip

{\bf Step 1.} \\
Consider a ball $B_{r}(P_{S}(\bar q))$, that for ease of notation we simply denote with   $B^{S}_{r}$, with $r > 0$ such that
\begin{itemize}
\item [-]  for each  $q \in Q_{\ell}^{N}$ the interval $[r_{q}^{+}, D_{q}] \cap B^{S}_{3r} = \emptyset$;
\item [-] for each  $q \in Q_{\ell}^{S}$,  $\sfd(P_{S}( q), \partial B^{S}_{3r} ) < r_{q}^{-}-\ve$, for some $\ve>0$
\end{itemize}
where $B^{S}_{3r}$ denotes the ball centered  in $P_{S}(\bar q)$ as well  with radius $3r$.
\\For $q \in Q_{\ell}^{g}$ it holds $D_{q} \geq \pi - C(N,v) \,  \delta(E)^{\frac{\beta}{N}}$ , implying (see Proposition \ref{P:estimatedensity}) that
 $$
| r_{q}^{-} - r_{N}^{-}(v) |, | r_{q}^{+} - r_{N}^{+}(v) | \leq C(N,v) \,  \delta(E)^{\frac{\beta}{N}}
$$
showing that we can chose $r$ sufficiently small so the the previous properties are verified, at least for $\delta(E)$ sufficiently small.
Notice that as $\delta(E)$ approaches $0$, $r$ can be considered fixed.

We now estimate the amount of mass of $E$ contained in $B^{S}_{r}$:
\begin{align*}
\mm(E\cap B^{S}_{r})& \geq  \mm(E_{S}\cap B^{S}_{r}) = \int_{Q^{S}_{\ell}} \mm_{q}(E_{q} \cap B^{S}_{r}) \qq(dq)  \\
&\geq \int_{Q_{\ell}^{S}} \mm_{q}( [0,r_{q}^{-}] \cap B^{S}_{r}) - \mm_{q}( E_{q} \Delta [0,r_{q}^{-}] )   \,\qq(dq).
\end{align*}
By triangular inequality $[0,r_{q}^{-}] \cap B^{S}_{r} \supset [0, r - \sfd(P_{S}(\bar q), P_{S}(q))] \supset [0, r- C(N,v)   \delta(E)^{\frac{\beta}{N}} ]$;
then we can continue as follows
\begin{align}\label{E:EBS}
\geq &~ C(r)\qq(Q_{\ell}^{S}) -\,   \delta(E)^{\gamma}  \geq  C(r)\delta(E)^{\alpha} - \,   \delta(E)^{\gamma} ,
\end{align}
with $C(r)$ only depending on the radius of $B^{S}_{r}$ remaining positive
when $\delta(E)$ approaches $0$ (see again Proposition \ref{P:estimatedensity}).

Moreover since $B^{S}_{r} \cap [r_{q}^{+}, D_{q}] = \emptyset$, it follows that $B^{S}_{r} \setminus E =  (B^{S}_{r}\setminus (E \setminus [r_{q}^{+}, D_{q}])),$
and therefore
\begin{align}\label{E:BSE}
\mm(B^{S}_{r} \setminus E) \geq &~ \int_{Q_{\ell}^{N}} \mm_{q}(B^{S}_{r} \setminus (E \setminus [r_{q}^{+}, D_{q}])) \, \qq(dq)  \geq  \int_{Q_{\ell}^{N}} \mm_{q}(B^{S}_{r}) \, \qq(dq) -\,  \delta(E)^{\gamma}  \cr
\geq &~  C(r)\delta(E)^{\alpha} -  \delta(E)^{\gamma} .
\end{align}

 We will find two contributions to the perimeter of $E$: one coming from the relative perimeter of $E$ inside  $B^{S}_{3r}$  and one coming from the relative perimeter of $E$ inside  $ X \setminus \overline{B^{S}_{3r+\ve}}$. In other words we  decompose (see Section \ref{Ss:isoperimetric})
$$
\PP(E) \geq \PP(E, B^{S}_{3r}) + \PP(E, X \setminus \overline{B^{S}_{3r+\ve}}).
$$
The second contribution will be obtained in Step 3 using the localization of $E$ discussed above; for the first one instead, since we  have not a disposal any isoperimetric inequality inside $B^{S}_{r}$
(that possibly is not a convex subset of $X$),  we will  consider a new localization whose associated transport set is contained in $B^{S}_{3r}$. This will be discussed in the next Step 2.
\\

{\bf Step 2.}
Consider the localization of the following function
$$
f = \frac{\chi_{E\cap B^{S}_{r}}}{\mm(E \cap B^{S}_{r})} - \frac{\chi_{B^{S}_{r} \setminus E}}{\mm(B^{S}_{r} \setminus E)}.
$$
Denote with $\T^{1} $ the corresponding transport set and consider the associated disintegration
$$
\mm\llcorner_{\T^{1}} = \int_{Q^{1}} \mm_{q}^{1} \, \qq^{1}(dq),
$$
verifying for $\qq^{1}$-a.e. $q \in Q^{1}$ the following properties
\begin{itemize}
\item $\mm^{1}_{q}(X_{q}^{1}) = 1$,
\item $(X_{q}^{1}, \sfd, \mm_{q}^{1})$ verifies $\CD(N-1,N)$
\item $\int f \mm_{q}^{1} = 0$.
\end{itemize}
Following moreover \cite{CM4}, since the ray $X^{1}_{q}$ starts inside $E\cap B^{S}_{r}$ and arrives inside $B^{S}_{r}\setminus E$,
one can modify the definition of the transport set and obtain that $X^{1}_{q} \subset B^{S}_{3r}$, at the price of obtaining a decomposition
of a strict subset of the original transport set which however will be still denoted with $\T^{1}$ and still contains $B^{S}_{r}$, up to an $\mm$-negligible subset; for details see \cite[Section 3]{CM4}.
\medskip

By definition of $\PP(E, B^{S}_{3r})$, for some $\{ u_{n}\}_{n\in\N} \subset \Lip(B^{S}_{3r})$ with $u_{n} \to \chi_{E}$ in $L^{1}(B^{S}_{3r},\mm)$
\begin{align*}
\PP(E, B^{S}_{3r})
&~ = \lim_{n\to \infty} \int_{B^{S}_{3r}} |\nabla u_{n}|(x) \, \mm(dx)  \geq \lim_{n\to \infty} \int_{B^{S}_{3r} \cap \T^{1}} |\nabla u_{n}|(x)\,  \mm(dx) \\
&~ \geq \lim_{n\to \infty}  \int_{Q^{1}} \int_{B^{S}_{3r} \cap \T^{1}} |\nabla u_{n}|(x) \,  \mm_{q}^{1}(dx) \qq^{1}(dq).
\end{align*}
Notice now that for $\qq^{1}$-a.e. $q \in Q^{1}$, the map $u_{n}$ restricted to the ray $X_{q}^{1}$ is still Lipschitz and converges to $\chi_{E \cap X_{q}^{1}}$ in
$L^{1}(B^{S}_{3r} \cap X^{1}_{q},\mm^{1}_{q})$; as observed before, $X_{q}^{1} \subset B^{S}_{3r}$,
hence the previous chain of inequalities can be continued using Fatou's Lemma as follows
\begin{equation}\label{eq:bullet}
\PP(E, B^{S}_{3r})  \geq  \int_{Q^{1}} \PP_{q}(E, X^{1}_{q})\, \qq^{1}(dq) \geq \int_{Q^{1}} \I_{N-1,N, \pi}(\mm_{q}^{1}(E)) \, \qq^{1}(dq),
\end{equation}
where $\PP_{q}(E, X^{1}_{q})$ is  the one-dimensional  perimeter of $E$ in the one-dimensional open set $X_{q}^{1}$ with respect to the
one-dimensional measure $\mm_{q}^{1}$ and the last inequality holds thanks to the fact that $(X^{1}_{q}, \sfd, \mm^{1}_{q})$ is a  $\CD(N-1,N)$ space.

Now from the localization, it follows that for $\qq^{1}$-a.e. $q \in Q^{1}$
\begin{equation}\label{E:newlocal}
\mm^{1}_{q}(E\cap B^{S}_{r}) = \frac{\mm(E\cap B^{S}_{r})}{\mm(B^{S}_{r} \setminus E)} \mm^{1}_{q}(B^{S}_{r} \setminus E).
\end{equation}
As $\mm^{1}_{q}$ is a probability measure $\qq^{1}$-a.e. and
$$
\mm(E\cap B^{S}_{r}) = \int_{Q^{1}} \mm^{1}_{q}(E\cap B^{S}_{r}) \, \qq^{1}(dq),
$$
from Lemma \ref{L:easy} we deduce the next inequality, for any $a \in [0,1)$
$$
\qq^{1}(\{ q \in Q^{1} \colon \mm^{1}_{q}(E\cap B^{S}_{r}) \geq a \}) \geq \frac{\mm(E\cap B^{S}_{r})  - a \mm(\T^{1})  }{1-a} \geq
\frac{\mm(E\cap B^{S}_{r})  - a}{1-a}.
$$
Choosing $a = \mm(E\cap B^{S}_{r})/2$  and denoting
$$
\bar Q^{1} : = \{ q \in Q^{1} \colon \mm^{1}_{q}(E\cap B^{S}_{r}) \geq  \mm(E\cap B^{S}_{r})/2 \},
$$
we obtain the next inequality
\begin{equation}\label{E:Q1bar}
\qq^{1}(\bar Q^{1}) \geq \mm(E\cap B^{S}_{r})/2.
\end{equation}
From  the definition of $\bar{Q}^{1}$ and \eqref{E:newlocal} it follows that
$$
\frac{1}{2}\mm(E\cap B^{S}_{r}) \leq \mm^{1}_{q}(E \cap B^{S}_{r}), \quad \frac{1}{2} \mm(B^{S}_{r}\setminus E) \leq \mm^{1}_{q}( B^{S}_{r} \setminus E), \quad \qq^{1}\text{-a.e.} \; q \in \bar{Q}^{1}.
$$
Hence we obtain immediately that
$$
\mm^{1}_{q}(E) \geq \mm^{1}_{q}(E \cap B^{S}_{r}) \geq \frac{1}{2}\mm(E\cap B^{S}_{r}), \quad \qq^{1}\text{-a.e.} \; q \in \bar{Q}^{1},
$$
and
$$
\mm^{1}_{q}(E) \leq 1 - \mm^{1}_{q}(B^{S}_{r} \setminus E) \leq 1 - \frac{1}{2} \mm(B^{S}_{r}\setminus E), \quad \qq^{1}\text{-a.e.} \; q \in \bar{Q}^{1}.
$$
Combining the last estimate with  \eqref{E:EBS} and \eqref{E:BSE} we obtain that
$$
\frac{1}{2}\left(C(r)\delta(E)^{\alpha} -  \,\delta(E)^{\gamma}  \right) \leq \mm^{1}_{q}(E ) \leq
1 - \frac{1}{2}\left(C(r)\delta(E)^{\alpha} - \,   \delta(E)^{\gamma}  \right), \quad \qq^{1}\text{-a.e.} \; q \in \bar{Q}^{1}.
$$
 Assuming $\gamma\in (\alpha,1)$,  this implies that for $\qq$-a.e. $q \in \bar Q^{1}$, $\I_{\pi}(\mm^{1}_{q}(E)) \geq \hat C(N,v,r) \delta(E)^{\alpha\frac{N-1}{N}}$;
hence, recalling \eqref{eq:bullet},  we obtain
\begin{equation}\label{eq:(1)}
\PP(E, B^{S}_{3r}) \geq \hat C(N,v,r) \, \delta(E)^{\alpha\frac{N-1}{N}} \qq(\bar Q^{1}) \geq \hat C(N,v,r) \, \delta(E)^{\alpha\frac{2N-1}{N}} \geq \hat C(N,v) \, \delta(E)^{\alpha \frac{2N-1}{N}},
\end{equation}
where the second inequality follows from \eqref{E:EBS} and \eqref{E:Q1bar}.
\\

{\bf Step 3.}\\
Now we take into account the contribution to the perimeter of $E$ inside $X \setminus \overline{B^{S}_{3r+\ve}}$:
reasoning as at the beginning of ${\bf Step 2.}$, we use the one dimensional rays of the localization of $E$ to obtain the next inequality
\begin{align*}
\PP(E, X \setminus \overline{B^{S}_{3r+\ve}})    \geq &~  \int_{Q_{\ell}} \PP_{q}(E_{q} , X_{q} \setminus \overline{B^{S}_{3r+\ve}})\,\qq(dq)   \crcr
= &~  \int_{Q_{\ell}^{N}} \PP_{q}(E_{q} , X_{q} \setminus \overline{B^{S}_{3r+\ve}} ) \,\qq(dq) +\int_{Q_{\ell}^{S}} \PP_{q}(E_{q} , X_{q} \setminus \overline{B^{S}_{3r+\ve}}) \,\qq(dq),
\end{align*}
where $\PP_{q}(E_{q} , X_{q} \setminus \overline{B^{S}_{3r+\ve}} )$ is  the one-dimensional relative perimeter of $E$ in the one-dimensional open set $X_{q} \setminus \overline{B^{S}_{3r+\ve}} \subset X_{q}$ with respect to the
one-dimensional measure $\mm_{q}$.

For $q \in Q_{\ell}^{S}$, by definition we know that $
\mm_{q}(E_{q} \Delta [0,r_{q}^{-}]) \leq \, \delta(E)^{\gamma} $; in  particular
\begin{equation}\label{eq:(star)}
\mm_{q} (E \setminus [0,r_{q}^{-}]) \leq\,   \delta(E)^{\gamma}.
\end{equation}
Notice that since $\PP_{q}(E_{q}) < \infty$, up to a set of $\mm_{q}$-measure zero, we can assume it to be the countable union of closed sets.
This will not affect any of the quantities involved in this proof. We now claim that
\begin{equation}\label{E:Q2S-point}
[r_{q}^{-}    -  2 \delta(E)^{\gamma}  /C_{N,q,v}   , r_{q}^{-} +  2  \delta(E)^{\gamma}  /C_{N,q,v}] \cap \partial E_{q} \neq \emptyset
\end{equation}
where
$$
C_{N,q,v} : = \min\{ h_{q}(t) \colon t\in  [r^{-}_{q}/2, r^{-}_{q} + (D_{q}-r^{-}_{q})/2] \},
$$
is uniformly positive for $q \in Q_{\ell}$  and $\delta(E) \in (0, \bar{\delta}(N)]$, by Proposition \ref{P:estimatedensity}.
We start the proof of \eqref{E:Q2S-point} by showing that
\begin{equation}\label{E:Q2S-point-bis}
[r_{q}^{-}   -  2 \delta(E)^{\gamma}  /C_{N,q,v}  , r_{q}^{-} ] \cap E_{q} \neq \emptyset.
\end{equation}
So suppose by contradiction that \eqref{E:Q2S-point-bis} was false. Since
$$
\mm_{q} \big( [r_{q}^{-}   -  2 \delta(E)^{\gamma}  /C_{N,q,v}  , r_{q}^{-}] \big) \geq   2 \delta(E)^{\gamma},
$$
we deduce that
\begin{align*}
\mm_{q}(E_{q} \cap [0,r_{q}^{-}]) = 	&~ \mm_{q}(E_{q} \cap [0,r_{q}^{-}  - 2 \delta(E)^{\gamma} ] )  \leq  \mm([0,r_{q}^{-}    - 2 \delta(E)^{\gamma} ]) \crcr
						 \leq 	&~ v - \mm([r_{q}^{-}   - 2 \delta(E)^{\gamma} ,r_{q}]) \crcr
						 \leq 	&~ v    - 2 \delta(E)^{\gamma}.
\end{align*}
It follows that
$$
v = \mm_{q}(E_{q} \setminus [0,r_{q}^{-}]) + \mm_{q}(E_{q} \cap [0,r_{q}^{-}]) \leq \mm_{q}(E_{q} \setminus [0,r_{q}^{-}]) +v   - 2 \delta(E)^{\gamma} ,
$$
 contradicting \eqref{eq:(star)}.
\\Hence \eqref{E:Q2S-point-bis} is proved. To obtain \eqref{E:Q2S-point} observe analogously that
$$
\mm_{q}([r_{q}^{-}, r_{q}^{-} + 2 \,  \delta(E)^{\gamma}  /C_{N,q,v}]) \geq 2\,  \delta(E)^{\gamma} ;
$$
therefore again by \eqref{eq:(star)} we get that $[r_{q}^{-}, r_{q}^{-} + 2  \delta(E)^{\gamma}  / C_{N,q,v}] \setminus  E \neq \emptyset$ yielding the claim \eqref{E:Q2S-point}.
\medskip

From  \eqref{E:Q2S-point}, we deduce that for $q \in Q_{\ell}^{S}$ it holds
$$
\PP_{q}(E_{q} ; X \setminus B^{S}_{3r+\ve}) \geq h_{q}(x_{q}),
$$
with $x_{q} \in [r_{q}^{-}  - 2 \delta(E)^{\gamma} /C_{N,q,v} , r_{q}^{-} +  2  \delta(E)^{\gamma}  /C_{N,q,v}] \cap \partial E_{q}$.
Hence
\begin{align*}
\PP_{q}(E_{q} ;X \setminus B^{S}_{3r+\ve}) ) - I_{\pi}(v)
\geq &~  h_{q}(r_{q}^{-}) - I_{\pi}(v) + h_{q}(x_{q}) - h_{q}(r_{q}^{-})  \crcr
\geq &~  h_{q}(r_{q}^{-}) - I_{\pi}(v) - C   \delta(E)^{\gamma}  \crcr
\geq &~  - C   \delta(E)^{\gamma} ,
\end{align*}
with $C = \sup\{ h_{q}'(t) \colon t\in  [r_{q}^{-}  - 2 \delta(E)^{\gamma} /C_{N,q,v} , r_{q}^{-} +  2  \delta(E)^{\gamma}  /C_{N,q,v}] \}$, uniform in $q \in Q_{\ell}$.
A similar (easier) argument  also works for $q \in Q_{\ell}^{N}$.
\\

{\bf Step 4.} \\
We now collect all the steps to reach a contradiction as follows:
\begin{align*}
\delta(E) 	\geq &~ \PP(E) - \I_{\pi}(v) \crcr
		\geq &~ \PP(E; B^{S}_{3r}) + \PP(E; X \setminus B^{S}_{3r+\ve}) - \I_{\pi}(v)\crcr
		\geq &~ \hat C(N,v) \delta(E)^{\alpha \frac{2N-1}{N}} +
				\int_{Q_{\ell}^{N}} \PP_{q}(E_{q} ; X \setminus B^{S}_{3r+\ve} ) \,\qq(dq) 
				   +\int_{Q_{\ell}^{S}} \PP_{q}(E_{q} ; X \setminus B^{S}_{3r+\ve}) \,\qq(dq)  \\
			& \quad	  -  \I_{\pi}(v) \crcr
		\geq	&~ \hat C(N,v) \delta(E)^{\alpha \frac{2N-1}{N}} + \qq(Q_{\ell}^{N}\cup Q_{\ell}^{S})(\I_{\pi}(v) -C   \delta(E)^{\gamma} ) -  \I_{\pi}(v) \crcr
		\geq	&~ \hat C(N,v) \, \delta(E)^{\alpha \frac{2N-1}{N}} +  \left(1 -  C(N,v) \,  \delta(E)^{1-\beta}  -  \frac{1}{C(N,v) \,} \delta(E)^{1-\gamma}  \right) (\I_{\pi}(v) -C \delta(E)^{\gamma}) \\
		& \quad -  \I_{\pi}(v),
\end{align*}
 where in the last estimate we made use of  \eqref{eq:qqQlg} and  \eqref{eq:QgsmQNS}.
Since all the constants are stable for $\delta(E)$ approaching $0$, the last inequality shows a
contradiction provided
$$
\alpha \frac{2N - 1}{N} <    \min\{\gamma, 1-\gamma, 1-\beta  \},
$$
for $\delta(E)$ below a threshold depending only on $N$ and $v = \mm(E)$.
\end{proof}

\begin{remark}\label{R:alexandrov}
 In case $(X,\sfd)$ is the metric space associated to a smooth Riemannian manifold, then $r$ can be chosen small enough so that  $B^{S}_{r}$ is a convex. In this case  it follows
that  $(B^{S}_{r},\sfd|_{B^{S}_{r}}, \mm|_{B^{S}_{r}})$ is a  non-branching  $\CD(N-1,N)$ space. Therefore Step 2 above  can be simplified as we can directly apply the Levy-Gromov inequality stated in Theorem  \ref{theorem:LGM} and get
the better estimate
$$
\PP(E, B) \geq \hat C(N,v) \delta(E)^{\alpha \frac{N-1}{N}}.
$$
Repeating verbatim the other steps of the proof, we reach a contradiction provided $\alpha$ verifies the  less restrictive
inequality
$$\alpha  <  \frac{N}{N-1} \min\{\gamma, 1-\gamma, 1-\beta  \} .$$
\end{remark}

In particular we have the next result.

\begin{proposition}\label{P:nointerface-Alexandrov}
Suppose $(X,\sfd,\mm)$ is a $\CD(N-1,N)$ space, $N\geq 2$,  with $(X,\sfd)$ metric space associated to a smooth Riemannian manifold.
Then there exists $\bar \delta > 0$ such that, whenever $\delta(E)< \bar \delta$, then the following inequality holds
$$
\min \{ \qq(Q_{\ell}^{S}) , \qq(Q_{\ell}^{N})\} \leq C \, \delta(E)^{\alpha}, \quad \text{for any } \alpha  <  \frac{N}{N-1} \min\{\gamma, 1-\gamma, 1-\beta  \} .
$$

\end{proposition}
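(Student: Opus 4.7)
The plan is to repeat the four-step scheme of Proposition \ref{P:nointerface} essentially verbatim, replacing only the interior localization argument of Step 2 by a direct application of the Levy-Gromov inequality on a convex ball, as suggested in Remark \ref{R:alexandrov}. By contradiction, assume $\qq(Q_{\ell}^{S}),\qq(Q_{\ell}^{N}) > C\,\delta(E)^{\alpha}$ with $\alpha < \tfrac{N}{N-1}\min\{\gamma,1-\gamma,1-\beta\}$, and choose $\bar\delta$ so small that all the asymptotic estimates from the original proof apply.

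The key geometric input is that, since $(X,\sfd)$ is isometric to a smooth Riemannian manifold, the metric ball $B^{S}_{3r}$ centered at $P_{S}(\bar q)$ is geodesically convex for all sufficiently small $r$ (take $r$ smaller than a third of the convexity radius at $P_{S}(\bar q)$). Consequently $(B^{S}_{3r},\sfd|_{B^{S}_{3r}},\mm|_{B^{S}_{3r}})$ is itself an essentially non-branching $\CD(N-1,N)$ space, and Theorem \ref{theorem:LGM} applies to any Borel subset of $B^{S}_{3r}$. As in the original Step 1, pick $r$ small enough (and fixed as $\delta(E)\to 0$) so that for $q\in Q^{N}_{\ell}$ the set $[r_{q}^{+},D_{q}]$ does not meet $B^{S}_{3r}$, and for $q\in Q^{S}_{\ell}$ the point $P_{S}(q)$ and a definite portion of the interval $[0,r_{q}^{-}]$ lies inside $B^{S}_{r}$. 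Estimates \eqref{E:EBS} and \eqref{E:BSE} then still give
\[
\mm(E\cap B^{S}_{r}),\ \mm(B^{S}_{r}\setminus E)\ \geq\ C(r)\,\delta(E)^{\alpha} - \delta(E)^{\gamma},
\]
and in particular (for $\gamma>\alpha$ and $\delta(E)$ small) both $\mm(E\cap B^{S}_{3r})$ and $\mm(B^{S}_{3r}\setminus E)$ are bounded below by a positive constant times $\delta(E)^{\alpha}$.

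At this point the new Step 2 becomes a one-line application of the Levy-Gromov inequality inside $B^{S}_{3r}$: letting $v':=\mm(E\cap B^{S}_{3r})/\mm(B^{S}_{3r})\in(0,1)$, we obtain
\[
\PP(E,B^{S}_{3r})\ \geq\ \mm(B^{S}_{3r})\,\cI_{N-1,N,\pi}(v'),
\]
and since the model profile satisfies $\cI_{N-1,N,\pi}(t)\asymp t^{(N-1)/N}$ near $t=0$ and symmetrically near $t=1$, the lower bound on $\min\{v',1-v'\}\gtrsim \delta(E)^{\alpha}$ yields
\[
\PP(E,B^{S}_{3r})\ \geq\ \hat C(N,v)\,\delta(E)^{\alpha\frac{N-1}{N}}.
\]
This is the promised improvement over the exponent $\alpha\frac{2N-1}{N}$ appearing in \eqref{eq:(1)}; the factor $\delta(E)^{\alpha}$ which was previously lost when passing from $\qq^{1}(\bar Q^{1})$ to $\mm(E\cap B^{S}_{r})$ in the auxiliary localization is now simply absent.

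Step 3 of the original proof (the contribution to $\PP(E)$ coming from $X\setminus \overline{B^{S}_{3r+\ve}}$, which is bounded below by $\qq(Q_{\ell}^{N}\cup Q_{\ell}^{S})(\cI_{\pi}(v)-C\delta(E)^{\gamma})$ via the monotonicity argument and \eqref{E:Q2S-point}) is independent of the interior estimate and carries over with no change. Putting the two contributions together as in Step 4 gives
\[
\delta(E)\ \geq\ \hat C(N,v)\,\delta(E)^{\alpha\frac{N-1}{N}} + \Big(1-o(1)\Big)\cI_{\pi}(v) - C\delta(E)^{\gamma} - \cI_{\pi}(v),
\]
which is a contradiction for $\delta(E)$ sufficiently small as soon as $\alpha\frac{N-1}{N}<\min\{\gamma,1-\gamma,1-\beta\}$, i.e.\ for every $\alpha<\tfrac{N}{N-1}\min\{\gamma,1-\gamma,1-\beta\}$. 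The main obstacle is conceptual rather than technical: one has to be sure that the convexity radius of the smooth manifold $(X,\sfd)$ at $P_{S}(\bar q)$ is bounded below uniformly in $\delta(E)\to 0$ (it is, since the point $P_{S}(\bar q)$ stays in the fixed compact space $X$ whose Riemannian metric is fixed), so that the radius $r$ in the argument can indeed be chosen uniformly.
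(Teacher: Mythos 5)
Your proposal is correct and follows exactly the route sketched in Remark~\ref{R:alexandrov}: keep Steps 1, 3 and 4 of Proposition~\ref{P:nointerface} unchanged, and replace the auxiliary localization of Step~2 by a direct application of Theorem~\ref{theorem:LGM} on a convex ball. The arithmetic improvement is correct -- the extra factor $\qq^1(\bar Q^1)\gtrsim\delta(E)^\alpha$ that the localization argument loses is indeed absent, so the exponent drops from $\alpha\frac{2N-1}{N}$ to $\alpha\frac{N-1}{N}$, giving the stated threshold $\alpha<\frac{N}{N-1}\min\{\gamma,1-\gamma,1-\beta\}$ in Step~4. The one point worth flagging, which you already noticed and which the paper glosses over, is that the radius at which balls are convex is a property of the particular manifold $(M,g)$ and is not controlled by $N$, $v$ and ${\rm Ric}_g\ge(N-1)g$ alone; hence the resulting constants $C,\bar\delta$ in the smooth case in general depend on the manifold (through its convexity radius), unlike the purely metric-measure constants $C(N,v)$ of Proposition~\ref{P:nointerface}. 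This does not affect the validity of the argument, since the manifold is fixed and compact, but it is an honest dependence that the student correctly identifies. A further minor remark: the paper's Remark~\ref{R:alexandrov} takes $B^S_r$ convex and bounds $\PP(E,B^S_{3r})\ge\PP(E,B^S_r)$, whereas you require $B^S_{3r}$ itself convex and apply Levy--Gromov there directly; both are equivalent up to adjusting $r$ by a factor of $3$, and both rely on the same lower bounds \eqref{E:EBS}--\eqref{E:BSE} to keep $\min\{v',1-v'\}\gtrsim\delta(E)^\alpha$.
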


We are now in position of proving the other main result of the paper.

\medskip

\textbf{Proof of Theorem \ref{thm:CD}}
We first observe that the first claim has already been obtained in Theorem \ref{T:Q2-3}.
We are therefore left with showing the second claim.

From Proposition \ref{P:nointerface} we assume with no loss in generality that $\qq(Q_{\ell}^{N}) \leq C \,  \delta(E)^{\alpha} $
and consequently  we pick as $\bar x = P_{S}(\bar q)$.
\\We will use now the following notation
$$
Q_{bad} = Q \setminus Q_{\ell}^{S}.
$$
Since from  \eqref{eq:qqQlg} and  \eqref{eq:QgsmQNS} we have
\begin{align}
\qq(Q_{\ell}^{S}) &\geq \qq(Q_{\ell}^{g})- \qq(Q^{g}_{\ell}\setminus (Q_{\ell}^{N}\cup Q_{\ell}^{S}))  - \qq(Q_{\ell}^{N}) \nonumber \\
& \geq  1 -  C(N,v) \,  \delta(E)^{1-\beta}  -  \frac{1}{C(N,v) \,} \delta(E)^{1-\gamma} -  C\delta(E)^{\alpha}  \label{eq:qqQellS}.
\end{align}
  Recalling that $\alpha< \min\{1-\gamma , 1-\beta\}$, we get
$
\qq(Q_{bad} ) \leq  C_{fin} \delta(E)^{\alpha}
$
with $C_{fin}$ depending on $N$ and $v$, and therefore
\begin{align*}
\mm(X \setminus \QQ^{-1}(Q_{bad }))\leq C_{fin} \,   \delta(E)^{\alpha} .
\end{align*}

Using the definition of $Q_{\ell}^{S}$ we obtain
\begin{align*}
\mm\Big( (E \Delta B_{r_{N}^{-}(v)}(\bar x)) \cap \QQ^{-1}(Q_{\ell}^{S}) \Big)
= &~ \int_{Q_{\ell}^{S}} \mm_{q}(E \Delta B_{r_{N}^{-}(v)}(\bar x)) \, \qq(dq) \crcr
\leq &~ \int_{Q_{\ell}^{S}} \mm_{q}(E_{q} \Delta [0,r_{q}^{-}]) \, \qq(dq) + \int_{Q_{\ell}^{S}} \mm_{q}([0,r_{q}^{-}] \Delta B_{r_{N}^{-}(v)}(\bar x)) \, \qq(dq) \crcr
\leq &\;    \delta(E)^{\gamma}  + \int_{Q_{\ell}^{S}} \mm_{q}([0,r_{q}^{-}] \Delta B_{r_{N}^{-}(v)}(\bar x)) \, \qq(dq).
\end{align*}
From Corollary \ref{C:positionSN} we have that $\sfd(P_{S}(q),P_{S}(\bar q)) \leq C(N,v)\,   \delta(E)^{\frac{\beta}{N}} $; it follows that
$$
[[0,r_{q}^{-}] \Delta B_{r_{N}^{-}}(\bar x)] \subset [r_{q}^{-} - C(N,v)   \delta(E)^{\frac{\beta}{N}} , r_{q}^{-} + C(N,v)   \delta(E)^{\frac{\beta}{N}} ]
$$
implying that
$$
\mm_{q}([0,r_{q}^{-}] \Delta B_{r_{N}^{-}}(\bar x))  \leq 2\,  \|h_{q}\|_{\infty}  \, C(N,v) \,  \delta(E)^{\frac{\beta}{N}}.
$$
Since the rays of $Q_{\ell}$ are uniformly long once the deficit is assumed to  be smaller than 1/10, we have $\|h_{q}\|_{\infty} \leq C(N)$ and therefore
$$
\int_{Q_{\ell}^{S}}\mm_{q}([0,r_{q}^{-}] \Delta B_{r_{N}^{-}(v)}(\bar{x})) \,\qq(dq) \leq  2 C(N) \, C(N,v)  \,  \delta(E)^{\frac{\beta}{N}}.
$$
 We  conclude that
\begin{align*}
\mm(E\Delta B_{r_{N}^{-}(v)}(\bar{x})) & \leq  \mm\Big( (E \Delta B_{r_{N}^{-}(v)}(\bar x)) \cap \QQ^{-1}(Q_{\ell}^{S}) \Big) +  \mm(X \setminus \QQ^{-1}(Q_{bad })) \\
& \leq  2 C(N) \, C(N,v)  \,  \delta(E)^{\frac{\beta}{N}} +    C_{fin} \,   \delta(E)^{\alpha} ,
\end{align*}
giving the claim for $\CD(N-1, N)$-spaces for
$$\eta:=\min\left\{   \frac{N}{2N-1} \; \min\{\gamma, 1-\gamma, 1-\beta  \} ,  \frac{\beta}{N}  \right\}.$$
It is easy to check that for $\beta \in (0,1), \gamma\in (0,1)$, the right hand side is maximized for $\gamma=1/2$ and $\beta=\frac{N^{2}}{N^{2}+2N-1}$ giving $\eta=\frac{N}{N^{2}+2N-1}$.

For smooth Riemannian manifolds, we can follow the same arguments by  using Proposition \ref{P:nointerface-Alexandrov} to improving the claim with
$$\eta:=\min\left\{   \frac{N}{N-1} \; \min\{\gamma, 1-\gamma, 1-\beta  \} ,  \frac{\beta}{N}  \right\}.$$
It is easy to check that for $\beta \in (0,1), \gamma\in (0,1)$, the right hand side is maximized for $\gamma=1/2$ and $\beta=\frac{N^{2}}{N^{2}+N-1}$ giving $\eta=\frac{N}{N^2+N-1}$.

\hfill$\Box$


\renewcommand{\thesection}{A}
\section{Appendix - Technical Lemmas} \label{Appendix A}

Let $h : [0,D] \to [0,\infty)$ be a $\CD(N-1,N)$ density, i.e.  for any $0 \leq t_{0}< t_{1} \leq D$  we have
\begin{equation}\label{eq:(alpha)}
\sin(  (t_{1}-t_{0})) h^{\frac{1}{N-1}}((1-s)t_{0} + st_{1})
\geq  \sin\left((1-s)(t_{1}-t_{0})  \right)h^{\frac{1}{N-1}}(t_{0})   + \sin\left(s(t_{1}-t_{0}) \right)h^{\frac{1}{N-1}}(t_{1}).
\end{equation}
Note that the above inequality is nothing but the synthetic version of the differential inequality

$$
\left(h^{1/(N-1)} \right)''+ h^{1/(N-1)} \leq 0;
$$
the previous condition together with $\int_{0}^{D} h(t)\,dt = 1$ implies that $h > 0$ over $[0,D]$.

Here we are interested in studing the behaviour of a general $\CD(N-1,N)$ density $h$ with almost maximal domain in the following sense $\pi - D = \ve \ll 1$.
In particular, we look for uniform estimates, i.e. only depending on $\ve$.

We recall the definition of the model $\CD(N-1,N)$ density:
\begin{eqnarray}
h_{N} (t)&: =& \frac{1}{\omega_{N}}\sin^{N-1}(t ),\quad   t \in [0,\pi],  \nonumber \\
\omega_{N} &: =& \int_{0}^{\pi} \sin ^{N-1}( t ) \, dt.  \nonumber
\end{eqnarray}
Then the following estimates hold.
\begin{lemma}\label{L:CD1estimates}
For any $t \in (0,D)$ the following holds: if $s>0$ is such that $t+ s \leq D$, then
\begin{equation}\label{eq:(ast)}
\frac{h_{N}(t+s+\ve)}{h_{N}(t+\ve)}
\leq \frac{h(t+ s )}{h(t)}
\leq  \frac{h_{N}(t+s)}{h_{N}(t)}.
\end{equation}
\end{lemma}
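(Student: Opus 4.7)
The plan is to obtain both inequalities as direct applications of the synthetic concavity property \eqref{eq:(alpha)}, specialized at two different choices of endpoints. The key observation is that dropping a nonnegative boundary value $h^{1/(N-1)}(0)$ or $h^{1/(N-1)}(D)$ turns \eqref{eq:(alpha)} into a clean two-point inequality of exactly the right shape.

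For the right-hand inequality, I would apply \eqref{eq:(alpha)} with endpoints $t_0 = 0$, $t_1 = t+s$, and interpolation parameter $\lambda = t/(t+s)$ (so that $(1-\lambda)t_0 + \lambda t_1 = t$). This yields
\[
\sin(t+s)\, h^{1/(N-1)}(t) \geq \sin(s)\, h^{1/(N-1)}(0) + \sin(t)\, h^{1/(N-1)}(t+s),
\]
and dropping the first term on the right (which is nonnegative) and raising to the $(N-1)$-th power produces $h(t+s)/h(t) \leq (\sin(t+s)/\sin(t))^{N-1} = h_N(t+s)/h_N(t)$.

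For the left-hand inequality I would apply \eqref{eq:(alpha)} with $t_0 = t$, $t_1 = D$, and $\lambda = s/(D-t)$, so that the interpolated point is $t+s$. This gives
\[
\sin(D-t)\, h^{1/(N-1)}(t+s) \geq \sin(D-t-s)\, h^{1/(N-1)}(t) + \sin(s)\, h^{1/(N-1)}(D).
\]
Dropping the nonnegative boundary term $h^{1/(N-1)}(D)$ and using $D = \pi - \varepsilon$ together with $\sin(\pi - x) = \sin(x)$ for $x \in [0,\pi]$ (note $t+\varepsilon, t+s+\varepsilon \in [0,\pi]$ by hypothesis), the coefficients transform as $\sin(D-t) = \sin(t+\varepsilon)$ and $\sin(D-t-s) = \sin(t+s+\varepsilon)$. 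Raising to the $(N-1)$-th power then yields $h(t+s)/h(t) \geq h_N(t+s+\varepsilon)/h_N(t+\varepsilon)$.

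There is no genuine obstacle here beyond making the correct choice of endpoints: once we decide to use $t_1 = t+s$ on one side and $t_0 = t$, $t_1 = D$ on the other, the estimates are immediate, and the trigonometric identity $\sin(\pi-x)=\sin(x)$ is what encodes the shift by $\varepsilon$ that distinguishes the two bounds. The only mild subtlety is checking the ranges of arguments so that the inequality $\sin(\pi-x)=\sin(x)$ is actually being applied inside $[0,\pi]$, which follows from $0 < t \leq t+s \leq D = \pi-\varepsilon$.
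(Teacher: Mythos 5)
Your proof is correct and follows exactly the same strategy as the paper's: apply the synthetic concavity inequality \eqref{eq:(alpha)} twice, once with $t_0 = 0$ (dropping the nonnegative term involving $h^{1/(N-1)}(0)$) and once with $t_1 = D$ (dropping the term involving $h^{1/(N-1)}(D)$), then use $\sin(\pi - x) = \sin(x)$ to introduce the $\varepsilon$-shift in the left-hand bound. The only difference is cosmetic: the paper parametrizes the interpolation point via the raw $s$ in \eqref{eq:(alpha)} and renames afterward, whereas you pick $\lambda$ so that the interpolated point is $t$ or $t+s$ from the outset.
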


\begin{proof}
{\bf Step 1.}\\
Assume $t_{1} = D = \pi - \ve$ and $t_{0}$ any element of $(0,D)$:
$$
\sin \left( \pi -  \ve - t_{0} \right) h^{\frac{1}{N-1}}\left(t_{0} +  s (\pi - \ve - t_{0} )  \right)
\geq  \sin\left((1-s)(\pi - \ve -t_{0})  \right) h^{\frac{1}{N-1}}(t_{0})
$$
Calling   $\tau_{0} : = s (\pi - \ve - t_{0})$  then we have
$$
 \frac{h^{\frac{1}{N-1}}\left(t_{0} +  \tau_{0}  \right)}{h^{\frac{1}{N-1}}(t_{0})}
\geq  \frac{\sin\left((1-s)(\pi - \ve -t_{0})\right)}{\sin \left(\pi -  \ve - t_{0} \right) } = \frac{\sin(\pi - \ve - t_{0} - \tau_{0})}{\sin(\pi - \ve - t_{0})}
= \frac{\sin(t_{0}+\tau_{0} + \ve)}{\sin(t_{0}+\ve)},
$$
for every $\tau_{0}>0$ such that $t_{0}+\tau_{0} \leq D$, yielding the first inequality in  \eqref{eq:(ast)}.
\medskip

{\bf Step 2.}\\
Now fix $t_{0} = 0$ and any $t_{1}$  any element of $(0,D)$:
$$
\sin \left( t_{1} \right) h^{\frac{1}{N-1}}\left(  s t_{1}  \right)
\geq  \sin\left( s t_{1}  \right) h^{\frac{1}{N-1}}(t_{1})
$$
Now define $t = st_{1}$ and $\tau: = (1-s) t_{1}$ and obtain:
$$
 \frac{h^{\frac{1}{N-1}}\left(t +  \tau  \right)}{h^{\frac{1}{N-1}}(t)}
\leq \frac{\sin(t +\tau)}{\sin(t)},
$$
for any $t, \tau > 0$ such that $t + \tau  \leq  D$.
 Hence also the second inequality in \eqref{eq:(ast)} is proved.
\end{proof}

Applying logarithm to \eqref{eq:(ast)} and taking the limit as $s \to 0$, one obtains the next bound on the derivative of a general $h$.
Before stating it recall that we set $\lambda_{D}:=\int_{0}^{D} h_{N}(t) \, dt$.

\begin{corollary}\label{C:A2}
Any $\CD(N-1,N)$ density $h:[0,D] \to [0,\infty)$ is locally Lipschitz and
for any $t \in (0,D)$ point of differentiability of $h$
$$
\frac{h'_{N}(t+\ve)}{h_{N}(t+\ve)} \leq \frac{h'(t)}{h(t)} \leq \frac{h'_{N}(t)}{h_{N}(t)},
$$
where $\ve = \pi - D$. In particular, for any $t \in (0,D)$ there exists $\ve_{0} = \ve_{0}(t)$ such that
$$
(\Lip \, h )(t) \leq C(N,t),
$$
for any $\ve \leq \ve_{0}(t)$.
\end{corollary}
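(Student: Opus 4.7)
The plan is to apply $\log$ to the two-sided bound \eqref{eq:(ast)} of Lemma \ref{L:CD1estimates} and pass to the limit $s \to 0^{+}$. Dividing the resulting inequality by $s > 0$ gives
\begin{equation*}
\frac{\log h_{N}(t+s+\ve) - \log h_{N}(t+\ve)}{s} \leq \frac{\log h(t+s) - \log h(t)}{s} \leq \frac{\log h_{N}(t+s) - \log h_{N}(t)}{s}.
\end{equation*}
The outer quotients converge to the smooth logarithmic derivatives $(N-1)\cot(t+\ve)$ and $(N-1)\cot(t)$ respectively, while at a point of differentiability of $h$ the middle quotient converges to $h'(t)/h(t)$. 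This yields the claimed two-sided bound; the one-sided limit $s \to 0^{-}$ (equivalently, swapping $t$ and $t+s$ in \eqref{eq:(ast)}) supplies the matching inequality from the other side.

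For local Lipschitzness, the $\CD(N-1,N)$ condition entails $(h^{1/(N-1)})'' \leq -h^{1/(N-1)} \leq 0$ distributionally, so $f := h^{1/(N-1)}$ is concave on $(0,D)$. A concave nonnegative function on an open interval is locally Lipschitz there, and the same is therefore true for $h = f^{N-1}$; in particular $h$ is differentiable almost everywhere on $(0,D)$, so that the pointwise inequality above is meaningful on a full-measure subset, and the one-sided derivatives it controls also give the standard bound on the asymmetric Lipschitz constant $(\Lip\, h)(t)$ at every interior point.

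For the quantitative estimate $(\Lip\, h)(t) \leq C(N,t)$: fix $t \in (0,D)$ and choose $\ve_{0}(t) > 0$ small enough that both $t$ and $t+\ve$ lie in the compact subinterval $[t/2,(t+\pi)/2] \subset (0,\pi)$ whenever $\ve \leq \ve_{0}(t)$. There the function $(N-1)|\cot|$ is bounded by some $C_{1}(N,t)$, so the inequality already proved yields $|h'(t)/h(t)| \leq C_{1}(N,t)$ uniformly in $\ve$. It remains to bound $h(t)$ itself uniformly in $\ve$; for this I would use that concavity of $f$ combined with the normalization $\int_{0}^{D} h\,dt = 1$ forces, through a triangle-envelope comparison from below on $f$, an upper bound $h(t) \leq C_{2}(N,t)$ depending only on $N$ and on $\min\{t,\pi-t\}$. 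Multiplying the two estimates gives $|h'(t)| \leq C(N,t)$, which is the desired Lipschitz bound. The only step that is not completely immediate is this last envelope bound on $h(t)$, but it is standard for concave positive functions with prescribed $L^{1}$ norm on $(0,D)$.
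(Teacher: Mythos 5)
Your argument is correct and, for the derivative bound, it is exactly what the paper does: take logarithms in \eqref{eq:(ast)}, divide by $s$, and let $s\to 0$, recognizing $h_N'(t)/h_N(t)=(N-1)\cot t$. The two points where you go beyond the paper's (very terse) justification are exactly the right ones, and both are handled correctly. For local Lipschitzness, passing to $f=h^{1/(N-1)}$, noting $f''\leq -f\leq 0$ distributionally so $f$ is concave hence locally Lipschitz on $(0,D)$, and then using that $f$ is continuous and strictly positive there to conclude $h=f^{N-1}$ is locally Lipschitz, is a clean and complete argument; you are also right that at a point of differentiability of $h$ the one-sided derivative already identifies the limit of the middle difference quotient, so the $s\to0^-$ remark is optional. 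For the quantitative bound $(\Lip\,h)(t)\leq C(N,t)$, you correctly identify that one needs both $|h'(t)/h(t)|\leq C_1(N,t)$ (which follows since, for $\ve\leq\ve_0(t)$, both $t$ and $t+\ve$ stay inside a compact subinterval of $(0,\pi)$ where $(N-1)|\cot|$ is bounded) and a uniform-in-$\ve$ bound $h(t)\leq C_2(N,t)$; the latter is precisely the content of the paper's Proposition~\ref{P:estimatedensity}, which is stated just after this corollary and is proved using only Lemma~\ref{L:CD1estimates}, so invoking it would close the gap you flag without needing the separate concavity-plus-$L^1$-normalization envelope argument you sketch (though that route also works and is standard).
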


From Lemma \ref{L:CD1estimates} is fairly easy to obtain the following result.

\begin{proposition}\label{P:estimatedensity}
For any $t \in (0,D)$
$$
\left( \frac{\omega_{N}}{\omega_{N}\lambda_{D} + \ve } \right) \min \{ h_{N}(t), h_{N}(t+\ve)\}  \leq h(t) \leq \left( \frac{\omega_{N} }{\omega_{N}-\ve} \right) \max\{ h_{N}(t), h_{N}(t+\ve) \}.
$$
with $C = C(N,D)$.
\end{proposition}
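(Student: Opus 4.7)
The idea is to integrate the pointwise ratio estimate from Lemma~\ref{L:CD1estimates} against the constraint $\int_{0}^{D} h(s)\,ds = 1$, splitting the integral at $s=t$ and using on each side the most favorable ratio comparison with $h_{N}$. Writing the Lemma in the symmetric form
\[
\frac{h_{N}(v+\varepsilon)}{h_{N}(u+\varepsilon)} \leq \frac{h(v)}{h(u)} \leq \frac{h_{N}(v)}{h_{N}(u)} \qquad \text{for } 0 < u < v \leq D,
\]
I would first derive, for fixed $t\in(0,D)$, the two chains of inequalities
\[
s \leq t : \quad h(s) \geq h(t)\,\frac{h_{N}(s)}{h_{N}(t)}, \qquad h(s) \leq h(t)\,\frac{h_{N}(s+\varepsilon)}{h_{N}(t+\varepsilon)},
\]
\[
s \geq t : \quad h(s) \geq h(t)\,\frac{h_{N}(s+\varepsilon)}{h_{N}(t+\varepsilon)}, \qquad h(s) \leq h(t)\,\frac{h_{N}(s)}{h_{N}(t)}.
\]

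Setting $F(\tau):=\int_{0}^{\tau}h_{N}$, so that $F(\pi)=1$ and $F(D)=\lambda_{D}$, the upper bound on $h(t)$ follows by integrating the lower bounds for $h(s)$:
\[
1 \;=\; \int_{0}^{D} h(s)\,ds \;\geq\; h(t)\!\left[\frac{F(t)}{h_{N}(t)}+\frac{1-F(t+\varepsilon)}{h_{N}(t+\varepsilon)}\right].
\]
Factoring $1/\max\{h_{N}(t),h_{N}(t+\varepsilon)\}$ out of the bracket and observing that $F(t+\varepsilon)-F(t)=\int_{t}^{t+\varepsilon}h_{N}\leq \varepsilon/\omega_{N}$ (because $h_{N}\leq 1/\omega_{N}$), the bracket is bounded below by $(\omega_{N}-\varepsilon)/(\omega_{N}\max\{h_{N}(t),h_{N}(t+\varepsilon)\})$, giving the claimed upper bound on $h(t)$.

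Symmetrically, integrating the upper bounds for $h(s)$ yields
\[
1 \;=\; \int_{0}^{D} h(s)\,ds \;\leq\; h(t)\!\left[\frac{F(t+\varepsilon)-F(\varepsilon)}{h_{N}(t+\varepsilon)}+\frac{\lambda_{D}-F(t)}{h_{N}(t)}\right].
\]
Factoring $1/\min\{h_{N}(t),h_{N}(t+\varepsilon)\}$ out, the quantity inside brackets is
\[
[F(t+\varepsilon)-F(t)]+[\lambda_{D}-F(\varepsilon)] \;\leq\; \frac{\varepsilon}{\omega_{N}}+\lambda_{D} \;=\; \frac{\omega_{N}\lambda_{D}+\varepsilon}{\omega_{N}},
\]
whence $h(t)\geq \frac{\omega_{N}}{\omega_{N}\lambda_{D}+\varepsilon}\min\{h_{N}(t),h_{N}(t+\varepsilon)\}$.

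The only subtle point is selecting, on each of the two integration intervals $[0,t]$ and $[t,D]$, the ratio estimate from Lemma~\ref{L:CD1estimates} with the correct ``$+\varepsilon$''-shift; after that choice is made, both bounds reduce to the elementary fact $\sin\leq 1$ and to rewriting $1-F(t+\varepsilon)+F(t)$ or $\lambda_{D}-F(\varepsilon)+F(t+\varepsilon)-F(t)$. No case analysis on whether $t$ lies before or after the maximum of $h_{N}$ is needed, since we always bound from below (resp.\ above) by the $\max$ (resp.\ $\min$) of $h_{N}(t)$ and $h_{N}(t+\varepsilon)$.
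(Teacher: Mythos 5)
Your proposal is correct, and it is essentially the same argument as the paper's. Both proofs fix $t$, apply Lemma~\ref{L:CD1estimates} on the two sub-intervals $(0,t)$ and $(t,D)$ with the appropriate $\varepsilon$-shift (shifted on the right of $t$ for the lower bound on $h(s)$, shifted on the left of $t$ for the upper bound), integrate against $\int_0^D h = 1$, factor out the $\min$ or $\max$ of $h_N(t), h_N(t+\varepsilon)$, and finish with the elementary bounds $F(t+\varepsilon)-F(t)\le\varepsilon/\omega_N$ and $F(\varepsilon)\ge 0$; your bookkeeping via $F(\tau)=\int_0^\tau h_N$ just makes the identical computation a little more explicit than the paper's version.
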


\begin{proof}
Fix  $t \in (0,D)$, then from Lemma \ref{L:CD1estimates}
$$
h(t) h_{N}(t+s )\geq  h(t+s ) h_{N}(t )
$$
for any $s >0$ such that $t+s < D$; in particular
\begin{equation}\label{eq:bullet1}
h(t) \int_{t}^{D} h_{N}(r)\,dt \geq h_{N}(t) \int_{t}^{D} h(r)\,dt.
\end{equation}
Similarly, for any $s \leq 0 $ with $t+s > 0$
$$
h(t) h_{N}(t+s+\ve) \geq h(t+s) h_{N}(t+\ve),
$$
implying
\begin{equation}\label{eq:bullet2}
h(t) \int_{\ve}^{t+\ve} h_{N}(r)\,dt \geq h_{N}(t+\ve) \int_{0}^{t} h(r)\,dr.
\end{equation}
Adding \eqref{eq:bullet1} and \eqref{eq:bullet2} one obtains
$$
\left(\int_{\ve}^{t+\ve} h_{N}(r)\,dt + \int_{t}^{D} h_{N}(r)\,dt \right)  h(t) \geq \min\{ h_{N}(t), h_{N}(t+\ve) \}.
$$
Since $\int_{\ve}^{t+\ve} h_{N}(r)\,dt + \int_{t}^{D} h_{N}(r)\,dt \leq \lambda_{D} + \ve/\omega_{N}$, it follows that
$$
\left(1+\frac{\ve}{\omega_{N}\lambda_{D}}\right)h(t) \geq \frac{1}{\lambda_{D}} \min\{ h_{N}(t), h_{N}(t+\ve) \},
$$
implying the first part of the claim.

The second part follows analogously: for $s > 0$
$$
h(t) h_{N}(t+s+\ve) \leq h(t+s) h_{N}(t+\ve),
$$
implying
$$
h(t) \int_{t+\ve}^{\pi}h_{N}(r)\,dt \leq h_{N}(t+\ve) \int_{t}^{D} h(r)\,dr.
$$
For any $s\leq 0$ with $t+s > 0$:
$$
h(t) h_{N}(t+s) \leq h(t+s) h_{N}(t),
$$
yielding $h(t) \int_{0}^{t} h_{N}(r)\,dr \leq h_{N}(t) \int_{0}^{t}h(r)\,dr$; summing the two contributions one obtains
$$
h(t)\left(1- \int_{t}^{t+\ve}h_{N}(r)\,dr\right) \leq \max\{ h_{N}(t),h_{N}(t+\ve) \};
$$
since $\int_{t}^{t+\ve}h_{N}(r)\,dr \leq \ve/\omega_{N}$, we proved the claim.
\end{proof}

We will also use the following easy monotonicity property.

\begin{lemma}\label{L:monotonicity}
Any density $h : [0,D] \to (0,\infty)$ verifying $\CD(N-1,N)$ that integrates to $1$ has a unique maximum $x_{0} \in  [0,D] $.
Moreover, $h$ is strictly increasing on $[0,x_{0}]$ and strictly decreasing over $[x_{0},D]$.
\end{lemma}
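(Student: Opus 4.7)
The core observation is that the synthetic $\CD(N-1,N)$ inequality \eqref{eq:(alpha)} implies concavity of $h^{1/(N-1)}$ on $[0,D]$. Indeed, since $\sin$ is concave on $[0,\pi]$ with $\sin(0)=0$, for any $L=t_1-t_0\in(0,\pi)$ and $s\in[0,1]$ we have
\[
\frac{\sin(sL)}{\sin L}\ge s,\qquad \frac{\sin((1-s)L)}{\sin L}\ge 1-s,
\]
so dividing \eqref{eq:(alpha)} by $\sin(t_1-t_0)>0$ yields the standard concavity inequality for $h^{1/(N-1)}$. Concavity, combined with $h>0$ everywhere and continuity (which follows either from Corollary \ref{C:A2} or directly from concavity on the open interval), already gives the existence of a maximum $x_0\in[0,D]$: one can compare the supremum on $[\varepsilon,D-\varepsilon]$ (attained by compactness) with the boundary values, using that concavity of $h^{1/(N-1)}$ forces $h$ to be upper semicontinuous at the endpoints.

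The key step is then to rule out that $h$ is constant on a non-trivial subinterval. Suppose $h\equiv c>0$ on $[a,a+L]\subset[0,D]$ with $L>0$. Plugging $t_0=a$, $t_1=a+L$ into \eqref{eq:(alpha)} and dividing by $c^{1/(N-1)}$ gives
\[
\sin L \ge \sin(sL)+\sin((1-s)L)\qquad\forall s\in(0,1).
\]
On the other hand, the sum-to-product identity gives
\[
\sin(sL)+\sin((1-s)L)-\sin L = \sin(sL)(1-\cos((1-s)L))+\sin((1-s)L)(1-\cos(sL))>0
\]
whenever $L\in(0,\pi)$ and $s\in(0,1)$, a contradiction. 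Hence $h$ is constant on no subinterval of positive length.

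With this in hand, uniqueness of the maximum and strict monotonicity follow from the concavity of $f:=h^{1/(N-1)}$. If $x_1<x_2$ were two distinct maximizers, concavity would give $f\equiv\max f$ on $[x_1,x_2]$, hence $h$ constant there, contradicting the previous step. For strict monotonicity, fix $x_0$ the unique maximum and take $0\le u<v\le x_0$; choosing $\lambda=(v-u)/(x_0-u)\in(0,1]$, concavity yields
\[
f(v)\ge (1-\lambda)f(u)+\lambda f(x_0).
\]
Since $f(x_0)>f(u)$ (strict because $u\ne x_0$ and $x_0$ is the unique maximum, combined with the no-plateau property to rule out $f(u)=f(x_0)$), this gives $f(v)>f(u)$, hence $h(v)>h(u)$. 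The argument on $[x_0,D]$ is symmetric.

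\textbf{Main obstacle.} The only delicate point is genuinely the no-plateau step: concavity of $h^{1/(N-1)}$ alone is compatible with constant subintervals, and the strict monotonicity claim fails without it. Extracting this rigidity from the sine-shaped CD inequality \eqref{eq:(alpha)} is the real content, and is achieved by the explicit trigonometric identity above. Everything else is a direct consequence of concavity of a positive continuous function on a compact interval.
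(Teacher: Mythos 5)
Your proof is correct and rests on the same core observation as the paper: the $\CD(N-1,N)$ coefficients $\sigma^{(s)}_{N-1,N-1}(\theta)=\sin(s\theta)/\sin\theta$ strictly dominate the linear coefficients $s$ for $\theta\in(0,\pi)$, $s\in(0,1)$. The paper's argument is a one-liner: since $\sigma^{(s)}(\theta)>s$ strictly and $h>0$, the defining inequality \eqref{eq:(alpha)} yields immediately that $h^{1/(N-1)}$ is \emph{strictly} concave, from which the unique maximum and strict monotonicity follow at once. Your route — first downgrading to weak concavity, then restoring strictness via a separate no-plateau lemma proved with the sum-to-product identity — is valid, but the detour is unnecessary: the strict trigonometric inequality you already have in hand (your $\sin(sL)+\sin((1-s)L)>\sin L$ is exactly $\sigma^{(s)}+\sigma^{(1-s)}>1$) upgrades concavity to strict concavity directly, so the plateau exclusion and the subsequent two-step monotonicity argument can be replaced by the single remark that a strictly concave positive function on a compact interval has a unique maximum and is strictly monotone on either side of it.
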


\begin{proof}
By definition, for any $t_{0},t_{1}$ it holds
$$
h^{\frac{1}{N-1}}((1-s) t_{0} + s t_{1}) \geq \sigma_{N-1,N}^{(1-s)}(t_{1} -t_{0})h^{\frac{1}{N-1}}(t_{0} ) +
\sigma_{N-1,N}^{(s)}(t_{1} -t_{0})h^{\frac{1}{N-1}}(t_{1} );
$$
since
$$
\sigma_{N-1,N}^{(s)}(t_{1} -t_{0}) = \frac{\sin(s(t_{1} - t_{0})) }{\sin(t_{1} - t_{0})} > s,
$$
in particular $h^{\frac{1}{N-1}}$ is strictly concave, implying the claim.
\end{proof}


\renewcommand{\thesection}{B}
\section{Appendix - Disintegration Theorem} \label{Appendix B}

Given a measure space $(X,\mathscr{X},\mm)$, suppose
a \emph{partition} of $X$ is given into \emph{disjoint} sets $\{ X_{\alpha}\}_{\alpha \in Q}$ so that $X = \cup_{\alpha \in Q} X_\alpha$.
Here $Q$ is the set of indices and $\QQ : X \to Q$ is the quotient map, i.e.
$$
\alpha = \QQ(x) \iff x \in X_{\alpha}.
$$
We endow $Q$ with the \emph{push forward $\sigma$-algebra} $\mathscr{Q}$ of $\mathscr{X}$:
$$
C \in \mathscr{Q} \quad \Longleftrightarrow \quad \QQ^{-1}(C) \in \mathscr{X},
$$
i.e. the biggest $\sigma$-algebra on $Q$ such that $\QQ$ is measurable; and
Moreover a measure $\qq$ on $(Q,\mathscr{Q})$ can be obtained by pushing forward $\mm$ via $\QQ$, i.e. $\qq := \QQ_\sharp \, \mm$,
obtaining  the quotient measure space $(Q, \mathscr{Q}, \qq)$.

\begin{definition}[Consistent and Strongly Consistent Disintegration]
\label{defi:dis}
A \emph{disintegration} of $\mm$ \emph{consistent with the partition} is a map:
$$
Q \ni \alpha \longmapsto \mm_{\alpha} \in \mathcal{P}(X,\mathscr{X})
$$
such that the following requirements hold:
\begin{enumerate}
\item  for all $B \in \mathscr{X}$, the map $\alpha \mapsto \mm_{\alpha}(B)$ is $\qq$-measurable;
\item for all $B \in \mathscr{X}$ and $C \in \mathscr{Q}$, the following consistency condition holds:
$$
\mm \left(B \cap \QQ^{-1}(C) \right) = \int_{C} \mm_{\alpha}(B)\, \qq(d\alpha).
$$
\end{enumerate}
A disintegration of $\mm$ is called \emph{strongly consistent} if in addition:
\begin{enumerate}
\item[(3)] for $\qq$-a.e. $\alpha \in Q$, $\mm_\alpha$ is concentrated on $X_{\alpha} = \QQ^{-1}(\alpha)$;
\end{enumerate}
\end{definition}

\medskip

We now formulate the Disintegration Theorem (it is formulated for probability measures but clearly holds for any finite non-zero measure):

\begin{theorem}[Theorem A.7, Proposition A.9 of \cite{biacar:cmono}] \label{T:disintegrationgeneral}
Assume that $(X,\mathscr{X},\mm)$ is a countably generated probability space and that $\{X_{\alpha}\}_{\alpha \in Q}$ is a partition of $X$.
\medskip

Then the quotient probability space $(Q, \mathscr{Q},\qq)$ is essentially countably generated and
there exists an essentially unique disintegration $\alpha \mapsto \mm_{\alpha}$ consistent with the partition.
\medskip

If in addition $\mathscr{X}$ contains all singletons, then the disintegration is strongly consistent if and only if there exists a $\mm$-section $S_{\mm} \in \mathscr{X}$ of the partition such that the $\sigma$-algebra on $S_{\mm}$ induced by the quotient-map contains $\mathcal{B}(S_{\mm})$.
\end{theorem}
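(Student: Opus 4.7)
The first assertion is a classical construction of a regular conditional probability, while the second is a Borel-selection type characterization; I would treat them in turn.

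For the existence and essential uniqueness of a consistent disintegration I would exploit countable generation to reduce matters to a countable family of Radon--Nikodym derivatives. Fix a countable algebra $\mathscr{A} = \{A_n\}_{n \in \N}$ generating $\mathscr{X}$ (modulo $\mm$). For every $n$, the set function $C \mapsto \mm(A_n \cap \QQ^{-1}(C))$ is a finite measure on $(Q,\mathscr{Q})$ absolutely continuous with respect to $\qq = \QQ_\sharp\mm$, so there is a $\qq$-measurable $f_n : Q \to [0,1]$ with
\[
\mm(A_n \cap \QQ^{-1}(C)) = \int_C f_n(\alpha) \, \qq(d\alpha) \qquad \forall C \in \mathscr{Q}.
\]
I would then verify, off a single $\qq$-null set $N$, the finitely many compatibility relations (finite additivity on disjoint pairs $A_n \sqcup A_m = A_k$, monotone continuity at $\emptyset$, and $f_n(\alpha) = 1$ when $A_n = X$) that express $\{f_n\}$ as a premeasure on $\mathscr{A}$: since only countably many identities have to hold $\qq$-a.e., their union of exceptional sets is $\qq$-negligible. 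Carathéodory extension then yields, for each $\alpha \notin N$, a probability measure $\mm_\alpha$ on $\mathscr{X}$ with $\mm_\alpha(A_n) = f_n(\alpha)$, and the monotone class theorem propagates the consistency identity from $\mathscr{A}$ to all of $\mathscr{X}$. Essential uniqueness is automatic, since any two disintegrations provide two versions of the same conditional expectations $\mm(\chi_{A_n}\mid\QQ^{-1}\mathscr{Q})$ and hence agree $\qq$-a.e.\ on the generating algebra, thus on $\mathscr{X}$. Essential countable generation of $(Q,\mathscr{Q},\qq)$ falls out as a byproduct: the sets $\{\alpha : f_n(\alpha) > r\}$, $n \in \N$, $r \in \Q$, generate $\mathscr{Q}$ up to $\qq$-null sets.

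For the second assertion, assume $\mathscr{X}$ contains all singletons. If a section $S_\mm \in \mathscr{X}$ exists whose induced $\sigma$-algebra contains $\mathcal{B}(S_\mm)$, then $\QQ|_{S_\mm}$ is a measurable bijection between $S_\mm$ and a full-$\qq$-measure subset of $Q$ whose inverse is also measurable; transporting the abstract $\mm_\alpha$ through this identification gives a disintegration where the atoms $X_\alpha = \QQ^{-1}(\alpha)$ are themselves in the completion of $\mathscr{X}$, and the countable family of identities $\mm_\alpha(A_n) = f_n(\QQ^{-1}(\alpha))$ separates points of $S_\mm$ and thereby forces $\mm_\alpha$ to be concentrated on $X_\alpha$ for $\qq$-a.e.\ $\alpha$. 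Conversely, from a strongly consistent disintegration one extracts a section by a measurable selection argument applied to the partition viewed as a multifunction $\alpha \rightrightarrows X_\alpha$: the hypothesis that $\mathscr{X}$ contains singletons combined with concentration of $\mm_\alpha$ on $X_\alpha$ allows one to pick, measurably and off a $\qq$-null set, a representative of each class, and to check that its trace $\sigma$-algebra contains the Borel sets by exhibiting countably many separating measurable functions coming from the densities $f_n$.

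The main obstacle, as I see it, is twofold. In the first part, the delicate point is the simultaneous handling of the $\qq$-null exceptional sets that arise when checking countable additivity of $A_n \mapsto f_n(\alpha)$ for \emph{each} $\alpha$: the countable generation of $\mathscr{X}$ is what keeps this union of bad sets $\qq$-negligible, and any attempt to weaken it immediately breaks the construction. In the second part, the subtlety is that without the section hypothesis the abstract probabilities $\mm_\alpha$ produced above need not be concentrated on the atoms $X_\alpha$, since $X_\alpha$ itself can fail to be measurable in the $\mm$-completion of $\mathscr{X}$; the section furnishes exactly the missing standard-Borel structure needed to upgrade consistency to strong consistency. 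Once this structural point is isolated, both directions of the biconditional become natural applications of measurable selection and the disintegration built in the first part.
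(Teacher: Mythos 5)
You are proving a statement the paper itself does not prove: Theorem \ref{T:disintegrationgeneral} is quoted verbatim from Bianchini--Caravenna \cite{biacar:cmono} (Theorem A.7 and Proposition A.9), so the comparison must be with that argument. Measured against it, your construction of the existence part has a genuine gap at the Carath\'eodory step. To extend the finitely additive data $A_n\mapsto f_n(\alpha)$ from the countable generating algebra to a measure on $\mathscr{X}$ you need $\sigma$-additivity on the algebra, i.e.\ continuity at $\emptyset$ along \emph{every} decreasing sequence of algebra elements with empty intersection. For each fixed sequence this holds $\qq$-a.e., but the family of such sequences has the cardinality of the continuum, so this is not ``countably many identities'' and the union-of-null-sets argument does not cover it. The gap is not a removable technicality: your proof nowhere uses that $\mathscr{Q}$ is the \emph{full} push-forward $\sigma$-algebra of the partition, so run verbatim it would produce a regular conditional probability with respect to an arbitrary countably generated sub-$\sigma$-algebra of a countably generated probability space. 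That statement is false, by Dieudonn\'e's classical example: take $X=[0,1]$, $\mathscr{X}=\sigma(\mathcal{B}([0,1]),E)$ with $E$ of Lebesgue outer measure one and inner measure zero, $\mm$ the extension of Lebesgue measure with $\mm(E)=1$, and condition on $\mathcal{B}([0,1])$; any candidate family satisfies $\mm_x|_{\mathcal{B}}=\delta_x$ a.e., which forces $\mm_x(E)=0$ for every $x\notin E$, while consistency forces $\mm_x(E)=1$ a.e.\ --- so for a non-null set of points the finitely additive data simply does not extend to a measure on $\mathscr{X}$. The theorem you are proving is nevertheless true, and the proof in \cite{biacar:cmono} gets $\sigma$-additivity from somewhere else: after establishing essential countable generation of $(Q,\mathscr{Q},\qq)$ (your byproduct argument for this, via the conditional expectations of the generators, is fine, as is your uniqueness argument), the problem is transferred to a standard Borel model by embedding $(X,\mathscr{X})$ into $([0,1],\mathcal{B})$ (or $\{0,1\}^{\N}$) through the countable generators, and the conditional measures are produced there using the regularity/tightness of Borel measures; the maximality of the quotient $\sigma$-algebra is what permits bringing them back to $(X,\mathscr{X})$. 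This compactness input is exactly what your abstract Carath\'eodory argument is missing.

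The second half is also under-argued, though less seriously. In the ``only if'' direction you invoke ``a measurable selection argument applied to $\alpha\rightrightarrows X_\alpha$'', but in this abstract setting $X$ carries no Polish or analytic structure to which von Neumann or Kuratowski--Ryll-Nardzewski type selection applies; one must first pass to the Borel model furnished by the isomorphism of $(X,\mathscr{X})$ with a subset of $([0,1],\mathcal{B})$ (this is where the hypothesis that $\mathscr{X}$ contains the singletons enters, cf.\ the discussion after the statement in Appendix \ref{Appendix B}) and select there, e.g.\ using the measures $\mm_\alpha$ themselves. In the ``if'' direction the point to make precise is that the countably many identities $\mm_\alpha(\QQ^{-1}(C_n))=\chi_{C_n}(\alpha)$, valid $\qq$-a.e.\ for generators $C_n$ of $\mathscr{Q}$ whose traces separate points of the section, pin $\mm_\alpha$ onto the single fiber $X_\alpha$, which is measurable precisely thanks to the section hypothesis; as written, your appeal to ``separating measurable functions coming from the densities $f_n$'' does not engage with the measurability of $X_\alpha$, which you yourself identify as the obstruction.
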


Let us expand on the statement of Theorem \ref{T:disintegrationgeneral}. Recall that a $\sigma$-algebra $\mathcal{A}$ is \emph{countably generated} if there exists a countable family of sets so that $\mathcal{A}$ coincides with the smallest $\sigma$-algebra containing them.
In the measure space $(Q, \mathscr{Q},\qq)$, the $\sigma$-algebra $\mathscr{Q}$ is called \emph{essentially countably generated} if there exists a countable family of sets $Q_{n} \subset Q$ such that for any $C \in \mathscr{Q}$ there exists $\hat C \in \hat{\mathscr{Q}}$,
where $\hat{\mathscr{Q}}$ is the $\sigma$-algebra generated by $\{ Q_{n} \}_{n \in \N}$, such that $\qq(C\, \Delta \, \hat C) = 0$.
Moreover from \cite[Proposition 3.3.2]{Srivastava} every countably generated measurable space having singletons as atoms is isomorphic to a subset of
$([0,1], \mathcal{B}([0,1]))$; in particular, there exists a topology over $X$ such that $\mathscr{X}$ coincide with the Borel $\sigma$-algebra of the topology;
so the notation $\mathcal{B}(S_{\mm})$ is justified.

Essential uniqueness is understood above in the following sense: if $\alpha\mapsto \mm^{1}_{\alpha}$ and $\alpha\mapsto \mm^{2}_{\alpha}$
are two consistent disintegrations with the partition then $\mm^{1}_{\alpha}=\mm^{2}_{\alpha}$ for $\qq$-a.e. $\alpha \in Q$.

Finally, a set $S \subset X$ is a section for the partition $X = \cup_{\alpha \in Q}X_{\alpha}$ if for any $\alpha \in Q$, $S \cap X_\alpha$ is a singleton $\{x_\alpha\}$.
By the axiom of choice, a section $S$ always exists, and we may identify $Q$ with $S$ via the map $Q \ni \alpha \mapsto x_\alpha \in S$.
A set $S_{\mm}$ is an $\mm$-section if there exists $Y \in \mathscr{X}$ with $\mm(X \setminus Y) = 0$ such that the partition $Y = \cup_{\alpha\in Q_\mm} (X_{\alpha} \cap Y)$ has section $S_{\mm}$, where $Q_{\mm} = \{\alpha \in Q ; X_{\alpha} \cap Y \neq \emptyset\}$. As $\qq = \QQ_{\sharp} \mm$, clearly $\qq(Q \setminus Q_{\mm}) = 0$.
As usual, we identify between $Q_{\mm}$ and $S_{\mm}$, so that now $Q_{\mm}$ carries two measurable structures: $\mathscr{Q} \cap Q_{\mm}$ (the push-forward of $\mathscr{X} \cap Y$ via $\QQ$), and also $\mathscr{X} \cap S_{\mm}$ via our identification. The last condition of Theorem \ref{T:disintegrationgeneral} is that $\mathscr{Q} \cap Q_{\mm} \supset \mathscr{X} \cap S_{\mm}$, i.e. that the restricted quotient-map $\QQ|_{Y} : (Y,\mathscr{X} \cap Y) \rightarrow (S_\mm , \mathscr{X} \cap S_{\mm})$ is measurable, so that the full quotient-map $\QQ : (X,\mathscr{X}) \rightarrow (S , \mathscr{X} \cap S)$ is $\mm$-measurable.

\medskip

We will typically apply the Disintegration Theorem to $(E,\mathcal{B}(E),\mm\llcorner_{E})$, where $E \subset X$ is an $\mm$-measurable subset
(with $\mm(E) > 0$) of the m.m.s. $(X,\sfd,\mm)$.
As our metric space is separable, $\mathcal{B}(E)$ is countably generated, and so Theorem \ref{T:disintegrationgeneral} applies.

\end{document}